\documentclass[10pt]{article}

\usepackage{amsmath,amssymb,amscd,amsthm,makeidx,txfonts,graphicx,url,bm}
\usepackage{a4wide,pstricks,xy}

\usepackage{youngtab}
\numberwithin{equation}{subsection}

\xyoption{all}
\input{xypic}

\newtheorem{theorem}{Theorem}[subsection]
\newtheorem{proposition}[theorem]{Proposition}
\newtheorem{corollary}[theorem]{Corollary}
\newtheorem{conjecture}[theorem]{Conjecture}
\newtheorem{lemma}[theorem]{Lemma}

\theoremstyle{remark}
\newtheorem{remark}[theorem]{Remark}

\theoremstyle{definition}

\def\beq{\begin{eqnarray}}
\def\eeq{\end{eqnarray}}
\def\bes{\begin{eqnarray*}}
\def\ees{\end{eqnarray*}}

\def\omhat{{\bm\omega}}

\def\muhat{{\bm \mu}}

\def\betahat{{\bm \beta}}

\def\lambdahat{{\bm \lambda}}
\def\alphahat{{\bm \alpha}}

\def\oP{\overline{\calP}}
\def\oT{\overline{\bT}}
\def\otT{\overline{\bT}{^o}}

\def\bT{{\mathbf{T}}}
\def\tT{{\bT}^o}
\def\otTp{\overline{\bf T}{^o_+}}

\def\bbV{\mathbb{V}}
\def\bbU{\mathbb{U}}
\def\C{\mathbb{C}}
\def\M{{\mathcal{M}}}

\def\calR{{\mathcal{R}}}
\def\calQ{{\mathcal{Q}}}

\def\calX{{\mathcal{X}}}
\def\calU{{\mathcal{U}}}
\def\calE{{\mathcal{E}}}
\def\calV{{\mathcal{V}}}

\def\calS{{\mathfrak{S}}}

\def\calZ{{\mathcal{Z}}}
\def\calP{\mathcal{P}}
\def\calH{\mathcal{H}}

\def\x{\mathbf{x}}
\def\y{\mathbf{y}}
\def\c{\mathbf{c}}

\def\v{\mathbf{v}}

\def\e{\mathbf{e}}
\def\w{\mathbf{w}}

\def\P{\mathcal{P}}

\def\tomega{{{\omega}^o}}
\def\tomhat{{{\omhat}^o}}

\def\N{\mathbb{Z}_{\geq 0}}

\def\F{\mathbb{F}}
\def\Q{\mathbb{Q}}

\def\t{\mathfrak{t}}
\def\calC{{\mathcal C}}

\def\Z{\mathbb{Z}}

\def\K{\mathbb{K}}

\def\gl{{\mathfrak g\mathfrak l}}

\def\g{{\mathfrak{g}}}

\newcommand{\nc}{\newcommand}

\def\tH{\tilde{H}}
\nc{\op}[1]{\mathop{\mathchoice{\mbox{\rm #1}}{\mbox{\rm #1}}
{\mbox{\rm \scriptsize #1}}{\mbox{\rm \tiny #1}}}\nolimits}
\nc{\al}{\alpha}

\nc{\ep}{\varepsilon} \nc{\ga}{\gamma} \nc{\Ga}{\Gamma}
\nc{\la}{\lambda} \nc{\La}{\Lambda} \nc{\si}{\sigma}
\nc{\Sig}{{\Gamma}} \nc{\Om}{\Omega} \nc{\om}{\omega}

\nc{\SL}{{\rm SL}} \nc{\GL}{{\rm GL}} \nc{\PGL}{{\rm PGL}}
\nc{\G}{{\rm G}}

\def\U{{\mathcal{U}}}

\nc{\cpt}{{\op{cpt}}} \nc{\Dol}{{\op{Dol}}} \nc{\DR}{{\op{DR}}}
\nc{\B}{{\op{B}}} \nc{\Triv}{\op{Triv}} \nc{\Hod}{{\op{Hod}}}
\nc{\Log}{{\op{Log}}} \nc{\Exp}{{\op{Exp}}} \nc{\Est}{E_{\op{st}}}
\nc{\Hst}{H_{\op{st}}} \nc{\Left}[1]{\hbox{$\left#1\vbox to
  10.5pt{}\right.\nulldelimiterspace=0pt \mathsurround=0pt$}}
\nc{\Right}[1]{\hbox{$\left.\vbox to
  10.5pt{}\right#1\nulldelimiterspace=0pt \mathsurround=0pt$}}
\nc{\LEFT}[1]{\hbox{$\left#1\vbox to
  15.5pt{}\right.\nulldelimiterspace=0pt \mathsurround=0pt$}}
\nc{\RIGHT}[1]{\hbox{$\left.\vbox to
 15.5pt{}\right#1\nulldelimiterspace=0pt \mathsurround=0pt$}}

\nc{\bee}{{\bf E}} \nc{\bphi}{{\bf \Phi}}

\begin{document}

\title{Tensor products of unipotent characters of general linear groups over finite fields}

\author{Emmanuel Letellier \\ {\it Universit\'e de Caen} \\ {\tt letellier.emmanuel@math.unicaen.fr}}

\pagestyle{myheadings}

\maketitle

\begin{abstract} Given unipotent characters $\U_1,\dots,\U_k$ of $\GL_n(\F_q)$, we prove that  $\left\langle \U_1\otimes\cdots \otimes \U_k,1\right\rangle$ is a polynomial in $q$ with non-negative integer coefficients (this was  observed for  $n\leq 8$ and $k=3$ by Hiss-L\"ubeck-Mattig \cite{HLM}). We study the degree of this polynomial and give a necessary and sufficient condition in terms of the representation theory of symmetric groups and root systems for this polynomial to be non-zero.\end{abstract}

\tableofcontents

\section{The main results} 

Recall that the complex unipotent characters of $\GL_n(\F_q)$ are naturally parameterized by the irreducible characters of the symmetric group $\calS_n$ and therefore by the partitions of $n$. For a partition $\mu$ of $n$ we put $\U_\mu$ the corresponding unipotent character. Under our parametrization, the trivial character of $\GL_n$ is $\calU_{(n^1)}$ and the Steinberg character is $\calU_{(1^n)}$. 

Fix an integer $g\geq 0$ and consider $\calE:\GL_n(\F_q)\rightarrow\C$, $x\mapsto q^{g\,{\rm dim}\, C_{\GL_n(\overline{\F}_q)}(x)}$. If $g=1$, this is the character of the representation of $\GL_n(\F_q)$ in the group algebra $\C[\gl_n(\F_q)]$ where $\GL_n$ acts on $\gl_n$ by conjugation.

Using the inner product formula  

$$
\left\langle f,h \right\rangle=\left\langle f,h \right\rangle_{\GL_n(\F_q)}=\frac{1}{|\GL_n(\F_q)|}\sum_{x\in\GL_n}f(x)\overline{h(x)}
$$
which holds for any two class functions $f,h:\GL_n(\F_q)\rightarrow\C$, it is not difficult to see, using the character table of $\GL_n(\F_q)$ due to Green \cite{green}, that for any multi-partition $\muhat=(\mu^1,\dots,\mu^k)$ of $n$, there exists a polynomial $U_\muhat(t)\in\Q[t]$ such that for any finite field $\F_q$, we have 

$$
U_\muhat(q)=\left\langle\calE\otimes\U_{\mu^1}\otimes\cdots\otimes\U_{\mu^k},1\right\rangle.$$ 

The aim of this paper is to study the polynomials $U_\muhat(t)$. They were computed for $k=3$, $g=0$ and $1\leq n\leq 8$ by F. L\"ubeck \cite{Lubeck}. Some results of this paper can be easily observed in these tables.

\subsection{Generic case}In order to state our main theorem on the polynomials $U_\muhat(q)$ we need to introduce an other class of polynomials $V_\muhat(q)$. We say that a tuple $(\calX_1,\dots,\calX_k)$ of irreducible characters of $\GL_n(\F_q)$ is of \emph{type} $\muhat=(\mu^1,\dots,\mu^k)$ if for all $i=1,\dots,k$, there exists a linear character $\alpha_i:\F_q^\times\rightarrow\C^\times$ such that 

$$
\calX_i=(\alpha_i\circ {\rm det})\cdot\U_{\mu^i}.
$$
Such a tuple is said to be \emph{generic} if  the linear character $\alpha_1\alpha_2\cdots\alpha_k$ is of order $n$. 

In \cite[\S 6.10.6]{letellier4} (for a review see also \S \ref{generic}) we define  polynomials $V_\muhat(t)\in\Q[t]$  for any multi-partition $\muhat$ and prove that for any finite field $\F_q$ and any generic tuple $(\calX_1,\dots,\calX_k)$ of irreducible characters of $\GL_n(\F_q)$ of type $\muhat$ we have 

$$
V_\muhat(q)=\left\langle \calE\otimes\calX_1\otimes\cdots\otimes\calX_k,1\right\rangle.
$$
 From a multi-partition $\muhat=(\mu^1,\dots,\mu^k)$, we define a comet-shaped graph $\Gamma_\muhat$ together with a dimension vector $\v_\muhat$ as in \S \ref{quiver}. We then denote by $\Phi(\Gamma_\muhat)$ the associated root system as defined in \cite{kac}.

Put

$$
d_\muhat:=n^2(2g-2+k)-\sum_{i,j}(\mu^i_j)^2+2=2-{^t}\v_\muhat {\bf C}_\muhat\v_\muhat
$$
where ${\bf C}_\muhat$ is the Cartan matrix of $\Gamma_\muhat$ and $\mu^i=(\mu^i_1,\dots,\mu^i_{r_i})$ with $\mu^i_1\geq\mu^i_2\geq\cdots\geq\mu^i_{r_i}$.

\begin{theorem} (i) The polynomial $V_\muhat(t)$ is non-zero if and only if $\v_\muhat\in\Phi(\Gamma_\muhat)$. Moroever $V_\muhat(t)=1$ if and only if $\v_\muhat$ is a real root.

\noindent (ii) If non-zero, $V_\muhat(t)$ is a monic polynomial of degree $d_\muhat/2$ with non-negative integer coefficients.
\label{theo19}\end{theorem}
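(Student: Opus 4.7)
My strategy is to identify $V_\muhat(t)$ with the Kac polynomial $A_{\v_\muhat}(t)$ of the comet-shaped quiver $\Gamma_\muhat$ (with $g$ loops at the central vertex) carrying the dimension vector $\v_\muhat$, and then to appeal to Kac's theorem on quiver representations together with the (now proven) Kac positivity conjecture.

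First, I would unwind the definition of $V_\muhat(t)$ from \cite{letellier4} (as reviewed in \S\ref{generic}). Plugging Green's character formula for $\GL_n(\F_q)$ and the definition of $\calE$ into the inner product defining $V_\muhat(q)$ produces, after the standard manipulations, an explicit Cauchy-type generating function indexed by tuples of partitions. The genericity assumption that $\alpha_1\alpha_2\cdots\alpha_k$ has order $n$ is precisely what is needed to kill the off-diagonal contributions in Green's expansion, leaving the generating function that appears in Hua's formula for the Kac polynomial of $\Gamma_\muhat$ with dimension vector $\v_\muhat$. Matching the two formulas yields the identity $V_\muhat(t)=A_{\v_\muhat}(t)$.

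With this identification in hand, part (i) is immediate from Kac's theorem: for any quiver $Q$ and dimension vector $\v$, one has $A_\v(t)\neq 0$ if and only if $\v$ is a positive root of $Q$, and $A_\v(t)=1$ if and only if $\v$ is a real positive root. For part (ii), Kac's theorem further shows that when non-zero, $A_\v(t)$ is monic of degree $1-\tfrac12\,{}^t\v\,{\bf C}_\muhat\,\v$, which equals $d_\muhat/2$ by the displayed formula. The non-negativity of the integer coefficients is Kac's positivity conjecture, now a theorem (established by Hausel--Letellier--Rodriguez-Villegas through the cohomology of Nakajima quiver varieties and in full generality by Schiffmann; a conceptual proof is also given by Davison--Meinhardt via the BPS sheaf).

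The main obstacle is the combinatorial identification in the first step: converting Green's character-theoretic formula into Hua's generating function relies on a careful use of the genericity hypothesis together with orthogonality identities for Hall--Littlewood symmetric functions, and the $g$ loops at the central vertex of $\Gamma_\muhat$ must be shown to correspond to the factor $\calE$ in the inner product. Once this identification is secured, the rest of the theorem is a formal packaging of deep but standard results on Kac polynomials of quivers.
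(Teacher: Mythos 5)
There is a genuine gap at the very first step: the identification $V_\muhat(t)=A_{\v_\muhat}(t)$ is false. In the paper the Kac polynomial of $\Gamma_\muhat$ at $\v_\muhat$ is $A_\muhat(t)=\langle\bbV(t),h_\muhat\rangle$ (pairing with \emph{complete} symmetric functions), whereas $V_\muhat(t)=\langle\bbV(t),s_\muhat\rangle$ (pairing with \emph{Schur} functions); the two families are related by the unitriangular Kostka matrix, $A_\lambdahat(t)=\sum_{\muhat\unrhd\lambdahat}K'_{\lambdahat\muhat}V_\muhat(t)$, and are not equal. A concrete counterexample appears in the paper's own example section: for $g=0$, $k=3$, $\muhat=((1^3),(1^3),(1^3))$ (so $\v_\muhat$ is the indivisible imaginary root of $\tilde{E}_6$), one has $A_\muhat(t)=t+6$ while $V_\muhat(t)=t$. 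The actual argument deduces part (i) and the ``monic of degree $d_\muhat/2$'' part of (ii) from Kac's theorem (Theorem \ref{kactheo}) by exploiting the triangular relation: one direction uses $K^*_{\muhat\muhat}=1$ together with Lemma \ref{lem1} (the polynomials $A_\lambdahat(t)$ with $\lambdahat\unrhd\muhat$, $\lambdahat\neq\muhat$ have strictly smaller degree), and the converse uses that all evaluations $V_\alphahat(q)$ are non-negative integers --- they are multiplicities of the trivial character by Theorem \ref{multit} --- so no cancellation can occur in $A_\muhat(t)=\sum_{\lambdahat}K'_{\muhat\lambdahat}V_\lambdahat(t)$, forcing $A_\muhat(t)\neq 0$ when $V_\muhat(t)\neq 0$. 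None of these steps survives under your (false) identification.

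Second, even with the correct relation in hand, your positivity argument does not work: $V_\muhat(t)=\sum_{\lambdahat\unrhd\muhat}K^*_{\muhat\lambdahat}A_\lambdahat(t)$ involves the \emph{inverse} Kostka matrix, which has negative entries, so non-negativity of the coefficients of the Kac polynomials $A_\lambdahat(t)$ does not transfer to $V_\muhat(t)$. The paper proves positivity by a separate geometric argument: it counts points over $\F_q$ of the varieties $\calQ^\w$ attached to generic tuples of regular semisimple adjoint orbits of type $\w$, produces representations $\rho^i$ of $\mathbb{S}_n=\mathfrak{S}_n\times\cdots\times\mathfrak{S}_n$ on $H_c^{2i}(\calQ)$ whose character values are read off from these counts, and identifies the coefficients of $V_\muhat(t)$ with the multiplicities $\langle\chi^{\muhat'},\rho^i\rangle_{\mathbb{S}_n}$. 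Your appeal to the positivity of Kac polynomials (Hausel--Letellier--Rodriguez-Villegas, Schiffmann, Davison--Meinhardt) addresses the wrong polynomial.
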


\begin{remark}In \cite{letellier4} we defined the notion of generic tuples $(\calX_1,\dots,\calX_k)$ for any types (not necessarily unipotent) of irreducible characters $\calX_1,\dots,\calX_k$ of $\GL_n(\F_q)$. Among these generic tuples, we defined a subclass whose elements are called \emph{admissible generic tuples}. We then proved that if $(\calX_1,\dots,\calX_k)$ is admissible  then the inner product $\langle\calE\otimes\calX_1\otimes\cdots\otimes\calX_k,1\rangle$ can be expressed as the Poincar\'e polynomial (for intersection cohomology) of a certain  quiver variety, from which we prove  a statement analogous to Theorem \ref{theo19}. Unfortunately, generic tuples of irreducible characters of unipotent type are never admissible and so we can not use the results of \cite{letellier4} to prove Theorem \ref{theo19}.

\end{remark}

\subsubsection{Connection with quiver varieties}\label{pos}

Consider a generic tuple  $(\calC_1,\dots,\calC_k)$ of regular semisimple adjoint orbits of $\gl_n(\C)$ (see proof of Theorem \ref{theogeneric} for the definition of generic tuples) and
consider the space $\calV$ of tuples of matrices 

$$(A_1,\dots,A_g,B_1,\dots,B_g,X_1,\dots,X_k)\in\gl_n(\C)^{2g}\times\calC_1\times\cdots\times\calC_k$$ which satisfy the equation

$$[A_1,B_1]+\cdots+[A_g,B_g]+X_1+\cdots+X_k=0.
$$

Put 

$$
\calQ:=\calV/\!/\GL_n={\rm Spec}\,\left(\C[\calV]^{\GL_n}\right)$$ where $\GL_n$ acts diagonally by conjugation on $\calV$. The variety $\calQ$ is non-singular and the quotient map $\calV\rightarrow\calQ$ is a principal $\PGL_n$-bundle in the \'etale topology. Denote by $H_c^i(\calQ,\C)$ the compactly supported cohomology of $\calQ$. Recall (see for instance \cite{hausel-letellier-villegas}) that $H_c^i(\calQ,\C)=0$ when $i$ is odd.

 We can define an action $\rho^i$ of $k$ copies $\mathbb{S}_n:=\mathfrak{S}_n\times\cdots\times \mathfrak{S}_n$ of the symmetric group $\mathfrak{S}_n$ on $H_c^{2i}(\calQ,\C)$. This is a particular case of Weyl group actions  on cohomology of quiver varieties constructed and studied by many authors including Nakajima \cite{nakajima} \cite{nakajima2}, Lusztig \cite{Lusztig}, Maffei \cite{maffei}. The construction of the Weyl group action given  in \cite{letellier4} does not apply here (we can only construct the  action of some relative Weyl groups which are finite subgroups of $\mathbb{S}_n$).

For a partition $\lambda$, denote by $\chi^\lambda$ the irreducible character of the symmetric group $\mathfrak{S}_n$ associated with $\lambda$ as in \cite{macdonald}. Following the strategy of \cite{hausel-letellier-villegas3} (see  proof of Theorem \ref{theogeneric} for more details) we can  show that for any multipartition $\muhat=(\mu^1,\dots,\mu^k)$ of $n$ we have

\begin{equation}
V_\muhat(t)=q^{-d/2}\sum_i\left\langle\chi^{\muhat'},\rho^i\right\rangle_{\mathbb{S}_n}t^i
\label{posfor}\end{equation}
where $d$ is the dimension of $\calQ$, $\muhat'$ denotes the dual multi-partition of $\muhat$ and $\chi^\muhat$ is the irreducible character $\chi^{\mu^1}\otimes\cdots\otimes\chi^{\mu^k}$ of $\mathbb{S}_n$. Formula (\ref{posfor}) implies the positivity of the coefficients of $V_\muhat(t)$. Theorem \ref{theo19}(i) together with  Formula (\ref{posfor}) provides a nice criterion in terms of roots for the appearance or not  of an irreducible character of $\mathbb{S}_n$ in $\rho^*:=\bigoplus_i\rho^i$.

\subsubsection{Connection with character varieties}\label{CV}

Let us recall the conjectural interpretation of the polynomials $V_\muhat(t)$ in terms of Poincar\'e polynomial of character varieties \cite[\S 1.3]{letellier4}.

For a partition $\lambda$ of $n$ let us denote by $C_\lambda$ the unipotent conjugacy class of $\GL_n(\C)$ whose size of Jordan blocks is given by the dual partition $\lambda'$ of $\lambda$. 

For a multi-partition $\muhat=(\mu^1,\dots,\mu^k)$ of $n$, put 

$$
\overline{C}_\muhat:=\GL_n^{2g}\times \overline{C}_{\mu^1}\times\cdots\times \overline{C}_{\mu^k}.
$$
Fix primitive $n$-th root of unity $\zeta$ and consider the space $\calZ_\muhat$ of tuples

$$
(A_1,\dots,A_g,B_1,\dots,B_g,X_1,\dots,X_k)\in \overline{C}_\muhat
$$
which satisfy the equation

$$
\prod_{i=1}^g(A_i,B_i)\prod_{j=1}^kX_i=\zeta\cdot I_n
$$
where $I_n$ is the identity matrix and $(A,B)$ is the commutator $ABA^{-1}B^{-1}$. Put

$$
\M_\muhat:=\calZ_\muhat/\!/\GL_n={\rm Spec}\,\left(\C[\calZ_\muhat]^{\GL_n}\right)
$$
where $\GL_n$ acts diagonally by conjugation on $\calZ_\muhat$.  By Saito \cite{saito} the compactly supported intersection cohomology $IH_c^i(\M_\muhat,\C)$ is endowed with a mixed Hodge structure. Denote by $\{ih_c^{r,s;k}(\M_\muhat)\}_{r,s,k}$ the corresponding  mixed Hodge numbers and consider the pure part 
$$
PP_c(\M_\muhat,t):=\sum_s ih_c^{s,s;2s}(\M_\muhat)t^s
$$
of the mixed Poincar\'e polynomial.
Then we have the following conjecture \cite[Conjecture 1.3.2]{letellier4}  

\begin{conjecture}$$
V_\muhat(t)=t^{-d_\muhat/2}PP_c(\M_\muhat,t).
$$
\end{conjecture}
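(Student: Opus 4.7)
The plan is to adapt the Hausel--Letellier--Rodr\'{\i}guez-Villegas strategy for character varieties of Riemann surfaces with prescribed conjugacy classes to the present unipotent setting, decomposing the task into three pillars: an arithmetic point count, purity of intersection cohomology, and verification of Hodge--Tate type. Concretely, I would aim to prove separately the two identities $|\M_\muhat(\F_q)| = q^{d_\muhat/2}\, V_\muhat(q)$ and $PP_c(\M_\muhat, t) = t^{d_\muhat/2}\, V_\muhat(t)$, and then bridge them using Katz's theorem on polynomial-count varieties together with a purity + Hodge--Tate statement.

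First, I would count $|\M_\muhat(\F_q)|$ arithmetically. The variety $\M_\muhat$ is a GIT quotient defined over an open subscheme of $\Spec \Z$, and since the central element $\zeta I_n$ involves a primitive $n$-th root of unity the projection $\calZ_\muhat \to \M_\muhat$ is a principal $\PGL_n$-bundle (genericity rules out non-trivial stabilisers). Applying Frobenius's formula to $\prod_i (A_i,B_i)\prod_j X_j = \zeta I_n$ expresses $|\calZ_\muhat(\F_q)|$ as a weighted sum over $\Irr \GL_n(\F_q)$ in which the factors are $\chi(1)^{2-2g-k}$, $\chi(\zeta I_n)$, and the averages of $\chi$ over $\overline{C}_{\mu^i}(\F_q)$. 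Because the unipotent orbit closures are singular, these averages must be interpreted via the characteristic functions of $\IC{\overline{C}_{\mu^i}}$ following Lusztig and Shoji. The condition that $\zeta$ has order $n$ is dual to the definition of generic tuples in \S \ref{generic} and should collapse the character sum, after grouping by types, to precisely the combinatorial expression for $V_\muhat(q)$, yielding the desired point count.

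Next, one invokes Katz's theorem: a polynomial-count variety has an $E$-polynomial satisfying $E(\M_\muhat; q) = |\M_\muhat(\F_q)|$. To upgrade this from the $E$-polynomial to the pure Poincar\'e polynomial $PP_c$, two additional inputs are required. The first is \emph{purity}, i.e.\ that $ih_c^{r,s;k}(\M_\muhat) = 0$ unless $r+s = k$. The second is the \emph{Hodge--Tate} property, i.e.\ vanishing unless also $r = s$. Together they identify $PP_c(\M_\muhat, t)$ with $E(\M_\muhat; t)$ and, combined with the point count, yield the conjecture.

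The main obstacle is purity. As emphasised in the remark after Theorem \ref{theo19}, generic tuples of irreducible characters of unipotent type are never admissible, so $\M_\muhat$ cannot be identified with a Nakajima quiver variety, and the hyperk\"ahler structure that powers the purity proof in the admissible (regular semisimple) case of \cite{letellier4} is not directly available. A plausible route is to flatly deform the prescribed unipotent classes $\overline{C}_{\mu^i}$ to generic semisimple classes of the same type: the generic fibre \emph{is} a smooth affine quiver variety, hence pure and of Hodge--Tate type, and one would argue via the decomposition theorem that the pure part of $IH_c^*$ is constant in the family and survives specialisation to the unipotent fibre. A complementary approach is to construct parabolic-type partial resolutions of $\M_\muhat$ and to track pure summands through the associated decomposition. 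In either case the crux is controlling the singular locus of $\M_\muhat$ and the behaviour of intersection complexes under deformation, which is where I would expect the hardest technical work to lie.
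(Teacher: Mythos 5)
The statement you are addressing is presented in the paper as a \emph{conjecture} (it is \cite[Conjecture 1.3.2]{letellier4}), and the paper offers no proof of it; so there is nothing of the paper's to compare your argument against, and the only question is whether your proposal actually closes the problem. It does not, and you essentially say so yourself: the purity and Hodge--Tate properties of $IH_c^*(\M_\muhat,\C)$ are exactly the open content of the conjecture, and neither of the routes you sketch (deforming the unipotent closures $\overline{C}_{\mu^i}$ to generic semisimple classes, or parabolic partial resolutions) is carried out. The deformation idea is particularly delicate: the total space of such a family is not proper over the base, the pure part of $IH_c^*$ of an affine variety is not in general locally constant in a flat family, and controlling the intersection complex of the singular special fibre from the smooth generic fibre is precisely where a proof would have to live.

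There is also a gap earlier in your chain. Your bridge from the point count to $PP_c$ goes through Katz's theorem, but that theorem concerns the $E$-polynomial of \emph{ordinary} compactly supported cohomology of a polynomial-count variety, whereas $PP_c(\M_\muhat,t)$ is defined via the mixed Hodge structure on \emph{intersection} cohomology, and $\M_\muhat$ is singular in general (the $\overline{C}_{\mu^i}$ are singular, and the action on $\calZ_\muhat$ need not be free). Moreover, counting points of $\calZ_\muhat(\F_q)$ by Frobenius's formula does not produce $q^{d_\muhat/2}V_\muhat(q)$: the character sum runs over all of ${\rm Irr}\,\GL_n(\F_q)$ and involves values of characters on singular orbit closures (Green functions, or characteristic functions of intersection complexes), not just the single generic tuple of unipotent-type characters appearing in Theorem \ref{multit}; the remark following Theorem \ref{theo19} records that generic tuples of unipotent type are never admissible, which is exactly why the mechanism of \cite{letellier4} identifying such inner products with cohomological invariants is unavailable here. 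Your write-up is a reasonable research plan, but each of its three pillars (point count, purity, Hodge--Tate type) remains unproved.
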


\subsection{Unipotent case}

In order to see the relation between the two polynomials $U_\muhat(t)$ and $V_\muhat(t)$, we need to introduce some notations. Consider $k$ separate sets $\x_1,\x_2,\dots,\x_k$ of infinitely many variables and denote by $\Lambda(\x_1,\dots,\x_k)=\Lambda(\x_1)\otimes_\Z\cdots\otimes_\Z\Lambda(\x_k)$ the ring of  functions separately symmetric in each set $\x_1,\dots,\x_k$, and put $\Lambda=\Q(t)\otimes_\Z\Lambda(\x_1,\dots,\x_k)$. For a multi-partition $\muhat=(\mu^1,\dots,\mu^k)$, we define  $s_\muhat\in\Lambda$ by 
$$
s_\muhat:=s_{\mu^1}(\x_1)\cdots s_{\mu^k}(\x_k)
$$
where for a partition $\lambda$ we denote by $s_\lambda(\x_i)\in\Lambda(\x_i)$ the corresponding Schur symmetric function as  in \cite{macdonald}.

Denote by $\P$ the set of all partitions including the unique partition $0$ of $0$ and denote by  $\oP$ the set of multi-partitions $\muhat=(\mu^1,\dots,\mu^k)\in\P^k$ with $|\mu^1|=|\mu^2|=\cdots=|\mu^k|=:|\muhat|$. We denote by $\P_n$ and $\oP_n$ the subsets of partitions of size $n$. 

We prove the following result (see Proposition \ref{expgen}).

\begin{proposition} We have 

$$
\Exp\left(\sum_{\muhat\in\oP-\{0\}} V_\muhat(t) s_\muhat T^{|\muhat|}\right)=1+\sum_{\muhat\in\oP-\{0\}} U_\muhat(t) s_\muhat T^{|\muhat|}
$$
where $\Exp: T\Lambda[[T]]\rightarrow 1+T\Lambda[[T]]$ is the plethystic exponential.
\end{proposition}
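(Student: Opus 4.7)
The plan is to prove the identity by rewriting both $U_\muhat(q)$ and $V_\muhat(q)$ as conjugacy-class sums using Green's character-theoretic description of $\GL_n(\F_q)$ and then identifying the resulting generating function as a plethystic exponential via the Frobenius orbit structure.

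First, I would use the inner product formula to write $U_\muhat(q)$ as a weighted character sum over conjugacy classes of $\GL_n(\F_q)$. Following Green, such classes are parametrized by partition-valued functions $\omega$ on the Frobenius orbits of $\overline{\F}_q^\times$ subject to $\sum_O|O|\cdot|\omega(O)|=n$. The value of a unipotent character $\U_\mu$ on a class of type $\omega$ factorizes multiplicatively over the orbits and can be expressed via Hall--Littlewood/Green polynomials. The same rewriting applies to $V_\muhat(q)$, with the additional insertion of the character $\beta\circ\det$ where $\beta=\prod_i\alpha_i$ has order $n$.

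Second, I would assemble the generating function $1+\sum_{\muhat\neq 0}U_\muhat(t)s_\muhat T^{|\muhat|}$ and apply the iterated Cauchy identity $\sum_\lambda s_\lambda(\x)s_\lambda(\y)=\prod_{i,j}(1-x_iy_j)^{-1}$ to convert the $\muhat$-sum into an infinite product indexed by the Frobenius orbits $O$. Grouping the factors by orbit size $d=|O|$ and using the standard identity relating finite-field orbit counts to plethystic exponentials (arising from $N_d=\frac{1}{d}\sum_{e\mid d}\mu(d/e)(q^e-1)$), this product becomes $\Exp$ applied to the contribution of a single orbit of size one.

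The main step---and likely the hardest---is to match this ``single-orbit'' contribution with $\sum_{\muhat\neq 0}V_\muhat(t)s_\muhat T^{|\muhat|}$. The generic condition that $\prod_i\alpha_i$ has order exactly $n$ corresponds combinatorially to the ``prime'' or ``connected'' component in the plethystic expansion, which is selected by the M\"obius-inversion structure of $\Log$. Making this identification rigorous requires careful bookkeeping of how the twist by $\beta\circ\det$ interacts with the Green-polynomial expressions for character values, and verifying compatibility with the construction of $V_\muhat(t)$ given in \cite{letellier4}; once it is established, applying $\Exp$ to both sides of the resulting logarithmic identity yields the proposition.
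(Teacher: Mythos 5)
Your treatment of the unipotent side follows the paper's actual argument quite closely: write $\left\langle\calE\otimes\calU_{\mu^1}\otimes\cdots\otimes\calU_{\mu^k},1\right\rangle$ as a sum over conjugacy classes, use the expression of unipotent character values via modified Hall--Littlewood functions ($\calU_\mu(C)=\langle\tH_{\omega_C}(\x;q),s_\mu(\x)\rangle$) together with the multiplicativity of $\calH_{\omega_f}$ and $\tH_{\omega_f}$ over Frobenius orbits, factor the generating function as $\prod_d\Omega(\x_1^d,\dots,\x_k^d;q^d)^{\phi_d(q)}$, and convert the exponent $\phi_d(q)=\frac{1}{d}\sum_{e\mid d}\mu(d/e)(q^e-1)$ into the statement $\Log\bigl(1+\sum_\muhat U_\muhat(q)s_\muhat\bigr)=(q-1)\Log\,\Omega(q)$ via the M\"obius-inversion lemma of Mozgovoy. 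That part of your plan is sound and is exactly what the paper does.

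The genuine gap is your third step. The paper does \emph{not} prove that the ``single-orbit'' series $(q-1)\Log\,\Omega(q)$ has Schur coefficients equal to the generic inner products $\left\langle\calE\otimes\calX_1\otimes\cdots\otimes\calX_k,1\right\rangle$; it takes this as known (Theorem 6.10.6 of \cite{letellier4}, restated here as Theorem \ref{multit}), and the whole proposition then follows by applying $\Exp$. You propose instead to establish this identification yourself, and your justification --- that genericity of $\prod_i\alpha_i$ ``corresponds to the prime or connected component selected by the M\"obius-inversion structure of $\Log$'' --- is a heuristic, not an argument. Genericity is a condition on the order of a product of linear characters; its role is to force the vanishing, via orthogonality relations on subtori/centralizers, of all contributions to the class sum except those matching the ``connected'' terms of the plethystic logarithm, and carrying this out requires the full character-theoretic computation with the twists $\alpha_i\circ\det$ inserted (this occupies a substantial part of \cite{letellier4}). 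As written, the hardest step of your proof is acknowledged but not performed; either carry out that computation or, as the paper does, quote the result from \cite{letellier4}, after which your remaining steps do complete the proof.
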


Our strategy to study the polynomials $U_\muhat(t)$ is to use the above proposition together with the properties of $V_\muhat(t)$.

Consider now a total ordering $\geq$ on the set of all partitions. Denote by $\bT_n^o$ the set of non-increasing sequences of partitions $\alpha^1\alpha^2\cdots\alpha^r$ such that $\sum_{i=1}^r|\alpha^i|=n$. We will write the elements $\omega^o\in\bT_n^o$ in the form $(\alpha^1)^{n_1}(\alpha^2)^{n_2}\cdots (\alpha^s)^{n_s}$ with $\alpha^1>\alpha^2>\cdots>\alpha^s$ and with $n_i$ the multiplicity of $\alpha^i$ in $\omega^o$. We then put

$$
\calS_{\omega^o}:=\prod_{i=1}^s(\calS_{|\alpha^i|})^{n_i},\hspace{.5cm}H_{\omega^o}:=\bigotimes_{i=1}^sT^{n_i}H_{\alpha^i}
$$
where for a partition $\lambda$, $H_\lambda$ is an irreducible $\C[\calS_{|\lambda|}]$-module with character $\chi^\lambda$, and where $T^mV:=V\otimes\cdots \otimes V$, with $V$ repeated $m$ times. For a partition $\mu$ of $n$ and a type $\omega^o\in\bT_n^o$ define

$$
\C_{\omega^o}^\mu:={\rm Hom}_{\calS_n}\left({\rm Ind}_{\calS_{\omega^o}}^{\calS_n}(H_{\omega^o}),H_\mu\right)
$$
where for an inclusion of finite groups $H\subset K$, we denote by ${\rm Ind}_H^K$ is the usual induction functor $V\mapsto \C[K]\otimes_{\C[H]}V$ from the category of left $\C[H]$-modules into the category of left $\C[K]$-modules. In \cite[\S 6]{letellier4} we constructed an action of the group $W_{\omega^o}:=\calS_{n_1}\times\cdots \times \calS_{n_s}$ on the $\C$-vector space $\C_{\omega^o}^\mu$ and we  proved that given a partition $\nu_i=(d_{i,1},\dots,d_{i,r_i})$ of $n_i$ for all $i=1,\dots,s$, the coordinates of 

$$
s_{\alpha^1}(\x^{d_{1,1}})\cdots s_{\alpha^1}(\x^{d_{1,r_1}})s_{\alpha^2}(\x^{d_{2,1}})\cdots s_{\alpha^2}(\x^{d_{2,r_2}})\cdots s_{\alpha^s}(\x^{d_{s,1}})\cdots s_{\alpha^s}(\x^{d_{s,r_s}})
$$in the basis of Schur symmetric functions $\{s_\mu\}_\mu$ 
equal ${\rm Trace}\,\left(w\,|\, \C_{\omega^o}^\mu\right)$ where $w=(w_1,\dots,w_s)\in W_{\omega^o}$ with $w_i$ in the conjugacy class of $\calS_{n_i}$ corresponding to the partition $\nu_i$.
 Choose once for all  a total ordering on the set of multi-partitions $\oP$ and denote by $\oT{^o_n}$ the set of non-increasing sequences $\alphahat_1^{n_1}\alphahat_2^{n_2}\cdots\alphahat_s^{n_s}$ such that $\alphahat_1>\alphahat_2>\cdots>\alphahat_s$ and 
 
 $$
 \sum_{i=1}^sn_i|\alphahat_i|=n.
 $$
It will be also convenient in this paper to think of the element  $\omhat^o=\alphahat_1^{n_1}\alphahat_2^{n_2}\cdots\alphahat_s^{n_s}\in\oT{^o_n}$ as a function $\omhat^o:\oP\rightarrow\N$ with $\omhat^o(\alphahat_i)=n_i$ and $\omhat^o(\muhat)=0$ if $\muhat\notin\{\alphahat_1,\dots,\alphahat_s\}$.

The total orderings on $\P$ and $\oP$ defines  a natural map $\oT{^o_n}\rightarrow(\bT_n^o)^k$, $\omhat^o\mapsto (\omega^o_1,\dots,\omega^o_k)$. For $\muhat=(\mu^1,\dots,\mu^k)\in\oP_n$ and $\omhat^o\in\bT{^o_n}$, define 

$$
\C_{\omhat^o}^\muhat:=\bigotimes_{i=1}^k\C_{\omega^o_i}^{\mu^i}.
$$
If $\omhat^o=\alphahat_1^{n_1}\alphahat_2^{n_2}\cdots\alphahat_s^{n_s}$, the group $W_{\omhat^o}:=\prod_{i=1}^s\calS_{n_i}$ acts on $\C_{\omhat^o_i}^{\muhat}$ via its diagonal embedding in $W_{\omega^o_1}\times\cdots\times W_{\omega^o_k}$. We then define

$$
\calR_{\omhat^o,\,\muhat}:=\left\{(\tau^1,\dots,\tau^s)\in\calP_{n_1}\times\cdots\times\calP_{n_s}\,\left|\,\left\langle H_{\tau^1}\otimes\cdots\otimes H_{\tau^s},\C_{\omhat^o}^\muhat\right\rangle_{W_{\omhat^o}}\neq 0\right.\right\}.
$$

For a partition $\lambda$, we denote by $\ell(\lambda)$ its length. We can now state the main result of this paper.

\begin{theorem} Let $\muhat\in\oP_n$ with $n\geq 1$.

\noindent (i) The polynomial $U_\muhat(t)$ has non-negative integer coefficients.

\noindent (ii) The polynomial $U_\muhat(t)$ is non-zero if and only if there exists $\omhat^o=\alphahat_1^{n_1}\alphahat_2^{n_2}\cdots\alphahat_s^{n_s}\in\oT{^o_n}$ and $(\tau^1,\dots,\tau^s)\in\calR_{\omhat^o,\,\muhat}$ such that 

\begin{equation}
\ell(\tau^i)\leq V_{\alphahat_i}(1)
\label{1}\end{equation} for all $i=1,\dots,s$.
\end{theorem}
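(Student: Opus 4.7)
The plan is to combine the plethystic exponential identity of the Proposition with the positivity and non-vanishing criterion for $V_\muhat$ from Theorem \ref{theo19}, and to unpack the Schur coefficients via a direct extension of the cited Frobenius-type formula to the multi-variable setting.

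First, using $\Exp(A+B)=\Exp(A)\Exp(B)$ together with $\Exp(X)=\sum_{n\geq 0}h_n[X]$, the Proposition rewrites as
\[
1+\sum_{\muhat\in\oP-\{0\}}U_\muhat(t)s_\muhat T^{|\muhat|}=\sum_{\omhat^o\in\oT{^o}}\prod_{i=1}^s h_{n_i}[V_{\alphahat_i}(t)s_{\alphahat_i}]\,T^{\sum_i n_i|\alphahat_i|},
\]
where $\omhat^o=\alphahat_1^{n_1}\cdots\alphahat_s^{n_s}$. Since $V_{\alphahat_i}(t)$ and $s_{\alphahat_i}$ are in disjoint variable sets (namely, $t$ and $\x_1,\dots,\x_k$), Cauchy's plethystic identity $h_n[AB]=\sum_{\tau\vdash n}s_\tau[A]s_\tau[B]$ gives
\[
h_{n_i}[V_{\alphahat_i}(t)s_{\alphahat_i}]=\sum_{\tau^i\vdash n_i}s_{\tau^i}[V_{\alphahat_i}(t)]\cdot s_{\tau^i}[s_{\alphahat_i}].
\]

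The key combinatorial step is the multi-partition Frobenius identity
\[
\prod_{i=1}^s s_{\tau^i}[s_{\alphahat_i}]=\sum_\muhat\langle H_{\tau^1}\otimes\cdots\otimes H_{\tau^s},\C_{\omhat^o}^\muhat\rangle_{W_{\omhat^o}}\,s_\muhat,
\]
which I would prove by expanding each $s_{\tau^i}[s_{\alphahat_i}]=\sum_{\nu^i\vdash n_i}\tfrac{\chi^{\tau^i}_{\nu^i}}{z_{\nu^i}}\prod_l\prod_j s_{\alpha^j_i}(\x_j^{\nu^i_l})$ by multiplicativity of the Adams operations, applying the cited single-variable-set trace formula in each $\x_j$ to get $\prod_i\prod_l s_{\alpha^j_i}(\x_j^{\nu^i_l})=\sum_{\mu^j}\Tr(w\mid\C^{\mu^j}_{\omega^o_j})s_{\mu^j}$ for $w=(w_{\nu^1},\dots,w_{\nu^s})\in W_{\omhat^o}$ viewed inside $W_{\omega^o_j}$, multiplying over $j$ to obtain $\Tr(w\mid\C^\muhat_{\omhat^o})$ (using $\C^\muhat_{\omhat^o}=\bigotimes_j\C^{\mu^j}_{\omega^o_j}$), and finally invoking the Frobenius character formula on $W_{\omhat^o}=\prod_i\calS_{n_i}$ to recognise the $s_\muhat$-coefficient $\sum_{(\nu^i)}\prod_i\tfrac{\chi^{\tau^i}_{\nu^i}}{z_{\nu^i}}\Tr(w\mid\C^\muhat_{\omhat^o})$ as the $W_{\omhat^o}$-invariant pairing $\langle\bigotimes_i H_{\tau^i},\C^\muhat_{\omhat^o}\rangle_{W_{\omhat^o}}$.

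Combining these pieces yields
\[
U_\muhat(t)=\sum_{\omhat^o\in\oT{^o_n}}\sum_{(\tau^1,\dots,\tau^s)}\left(\prod_{i=1}^s s_{\tau^i}[V_{\alphahat_i}(t)]\right)\langle H_{\tau^1}\otimes\cdots\otimes H_{\tau^s},\C^\muhat_{\omhat^o}\rangle_{W_{\omhat^o}}.
\]
By Theorem \ref{theo19}(ii), $V_{\alphahat_i}(t)\in\N[t]$; viewing it as a sum of $V_{\alphahat_i}(1)$ monomial terms $t^j$, the plethystic evaluation $s_{\tau^i}[V_{\alphahat_i}(t)]=s_{\tau^i}(y_1,\dots,y_{V_{\alphahat_i}(1)})$ at those monomials is a polynomial in $t$ with non-negative integer coefficients, and is non-zero precisely when $\ell(\tau^i)\leq V_{\alphahat_i}(1)$. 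Since the inner products on the right are non-negative integers, every summand is a non-negative polynomial in $t$, giving (i); and $U_\muhat(t)\neq 0$ iff at least one summand is non-zero, which translates precisely into the existence of $\omhat^o$ and $(\tau^1,\dots,\tau^s)\in\calR_{\omhat^o,\muhat}$ with $\ell(\tau^i)\leq V_{\alphahat_i}(1)$ for every $i$, giving (ii). The main obstacle is essentially bookkeeping: when several $\alphahat_i$ share a common $j$-th component, the canonical form of $\omega^o_j$ merges these into a single block of $W_{\omega^o_j}$, so one must carefully track the embedding $W_{\omhat^o}\hookrightarrow W_{\omega^o_j}$ and the induced cycle types in order to apply the cited trace formula variable-set by variable-set.
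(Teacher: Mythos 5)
Your proposal is correct and is essentially the paper's own argument in plethystic clothing: the paper likewise expands the exponential over split multi-types, identifies the coefficients $c_\omhat^\muhat$ on each fiber with traces of $W_{\omhat^o}$ on $\C_{\omhat^o}^\muhat$ via the cited trace formula, applies the Frobenius character formula, and then introduces auxiliary alphabets $\y_i$ specialized so that $p_1(\y_i)=V_{\alphahat_i}(t)$ --- which is exactly your $s_{\tau^i}[V_{\alphahat_i}(t)]$ --- concluding with the same non-negativity and Schur-vanishing criterion $\ell(\tau^i)\leq V_{\alphahat_i}(1)$. The only difference is notational (plethysm and the Cauchy identity versus explicit power-sum specializations), so no further comparison is needed.
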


By Theorem \ref{theo19}, the inequality (\ref{1}) does not hold unless $\v_{\alphahat_i}$ is a root of $\Gamma_{\alphahat_i}$. Denote by $\oT{^o_{n\,+}}$ the subset of $\oT{^o_n}$ of sequences $\alphahat_1^{n_1}\alphahat_2^{n_2}\cdots\alphahat_s^{n_s}$ with $\v_{\alphahat_i}\in\Phi(\Gamma_{\alphahat_i})$.

\begin{corollary}Let $\muhat\in\oP_n$ with $n\geq 1$. If there exists $\omhat^o\in\oT{^o_{n\,+}}$ such that $\left\langle \C_{\omhat^o}^\muhat,1\right\rangle\neq 0$, then $U_\muhat(t)\neq 0$. 
\label{coro1}\end{corollary}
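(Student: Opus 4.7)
The plan is to apply the preceding theorem with an explicit, minimal choice of partitions $\tau^i$. Given $\omhat^o=\alphahat_1^{n_1}\alphahat_2^{n_2}\cdots\alphahat_s^{n_s}\in\oT{^o_{n\,+}}$ with $\langle\C_{\omhat^o}^\muhat,1\rangle_{W_{\omhat^o}}\neq 0$, I would take the simplest possible candidate $\tau^i=(n_i)$, the one-row partition of $n_i$, for $i=1,\dots,s$, and check the two hypotheses of the theorem in turn.

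With this choice, $H_{(n_i)}$ is the trivial $\calS_{n_i}$-module, so $H_{\tau^1}\otimes\cdots\otimes H_{\tau^s}$ is the trivial representation of $W_{\omhat^o}=\calS_{n_1}\times\cdots\times\calS_{n_s}$. The multiplicity $\langle H_{\tau^1}\otimes\cdots\otimes H_{\tau^s},\C_{\omhat^o}^\muhat\rangle_{W_{\omhat^o}}$ then equals $\langle 1,\C_{\omhat^o}^\muhat\rangle_{W_{\omhat^o}}$, which is non-zero by hypothesis. Hence $(\tau^1,\dots,\tau^s)\in\calR_{\omhat^o,\,\muhat}$.

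It remains to verify the length inequality (\ref{1}), namely $\ell(\tau^i)\leq V_{\alphahat_i}(1)$. Since $\ell((n_i))=1$, this reduces to $V_{\alphahat_i}(1)\geq 1$. By definition of $\oT{^o_{n\,+}}$, each $\v_{\alphahat_i}$ lies in $\Phi(\Gamma_{\alphahat_i})$, so Theorem \ref{theo19}(i) gives $V_{\alphahat_i}(t)\neq 0$, and part (ii) then says $V_{\alphahat_i}(t)$ is monic with non-negative integer coefficients. The leading coefficient alone forces $V_{\alphahat_i}(1)\geq 1$, so (\ref{1}) holds. All hypotheses of the preceding theorem are therefore met, and $U_\muhat(t)\neq 0$ follows. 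There is no genuine obstacle here; in fact the subset $\oT{^o_{n\,+}}$ is defined precisely so that the trivial choice $\tau^i=(n_i)$ automatically satisfies the length bound.
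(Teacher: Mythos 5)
Your argument is correct, and it is in fact exactly the alternative proof the paper itself records: right after proving the equivalent Corollary \ref{coro0}, the author remarks that the statement ``is also a straightforward consequence of Theorem \ref{maintheo1}'', taking $(\tau^1,\dots,\tau^s)=((n_1),\dots,(n_s))$ so that $H_{\tau^1}\otimes\cdots\otimes H_{\tau^s}$ is the trivial $W_{\omhat^o}$-module and $\ell(\tau^i)=1\leq V_{\alphahat_i}(1)$ because $\v_{\alphahat_i}$ is a root. The paper's \emph{primary} proof is slightly different: it uses the decomposition $U_\muhat(t)=\sum_{\tomhat}W_\muhat^\tomhat(t)$ of (\ref{decomp}), the non-negativity of the coefficients of each $W_\muhat^\tomhat(t)$ (so no cancellation can occur), and Proposition \ref{suffcond}, which identifies $\left\langle\C_{\tomhat}^\muhat,1\right\rangle_{W_\tomhat}$ as the leading coefficient of $W_\muhat^\tomhat(t)$ in degree $d_\tomhat/2$. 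That route buys more than non-vanishing --- it locates exactly where the contribution $\left\langle\C_{\tomhat}^\muhat,1\right\rangle$ appears in $U_\muhat(t)$, which is what feeds into the degree estimates of Theorem \ref{degtheo} --- whereas your route is the quicker way to get just the non-vanishing conclusion. Both are valid; nothing is missing from your argument.
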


Let $\omhat^o=\alphahat_1^{n_1}\alphahat_2^{n_2}\cdots\alphahat_s^{n_s}\in\oT{^o_n}$ and $\muhat=(\mu^1,\dots,\mu^k)\in\oP_n$. 

Notice that if $\alphahat_i=((1),\dots,(1))$ for all $i=1,\dots,s$, then $W_{\omhat^o}=\calS_n$ and $\C_{\omhat^o}^\muhat=H_\muhat$. 

Note also that if $n_1=n_2=\cdots=n_s$, then $W_{\omhat^o}=1$ and so 

$$
\left\langle \C_{\omhat^o}^\muhat,1\right\rangle={\rm dim}\, \C_{\omhat^o}^\muhat
$$
is a product of Littlewood-Richardson coefficients. If moreover $s=1$ and $n_1=1$, then $\left\langle\C_{\omhat^o}^\muhat,1\right\rangle=\delta_{\omhat^o,\, \muhat}$.

In particular it follows from Corollary \ref{coro1} that if  $\v_\muhat\in\Phi(\Gamma_\muhat)$ or if $\langle H_\muhat,1\rangle\neq 0$, then $U_\muhat(t)\neq 0$.

We can actually prove the following result (see Proposition \ref{suffcond} and Remark \ref{cases}).

\begin{proposition}The term $\langle H_\muhat,1\rangle$ contributes to the constant term of $U_\muhat(t)$.
\end{proposition}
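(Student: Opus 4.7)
The plan is to isolate, within the plethystic exponential identity of Proposition~\ref{expgen}, the single factor associated with $\alphahat_0:=((1),(1),\ldots,(1))\in\oP_1$ (each of the $k$ components being the partition $(1)$) and to verify that all remaining factors contribute only non-negative polynomials in $t$.

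First I would factor
$$
\Exp\left(\sum_{\muhat\in\oP-\{0\}}V_\muhat(t)\,s_\muhat T^{|\muhat|}\right)=\Exp\bigl(V_{\alphahat_0}s_{\alphahat_0}T\bigr)\cdot\Exp\left(\sum_{\muhat\neq\alphahat_0}V_\muhat(t)\,s_\muhat T^{|\muhat|}\right).
$$
The quiver $\Gamma_{\alphahat_0}$ reduces to the central vertex (with $g$ loops) and $\v_{\alphahat_0}$ is the simple root there; in the primary case $g=0$ this is a real root, so Theorem~\ref{theo19} gives $V_{\alphahat_0}(t)=1$ and the first factor becomes the $t$-independent series $\Exp(s_{\alphahat_0}T)=\sum_{n\geq 0}h_n\bigl[p_1(\x_1)\cdots p_1(\x_k)\bigr]T^n$.

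Next I would compute this plethysm using $h_n=\sum_{\lambda\vdash n}p_\lambda/z_\lambda$, the identity $p_m[p_1(\x_i)]=p_m(\x_i)$, and the expansion $p_\lambda(\x_i)=\sum_{\mu^i}\chi^{\mu^i}(\lambda)\,s_{\mu^i}(\x_i)$. One obtains
$$
h_n\bigl[p_1(\x_1)\cdots p_1(\x_k)\bigr]=\sum_{\lambda\vdash n}\frac{1}{z_\lambda}\,p_\lambda(\x_1)\cdots p_\lambda(\x_k)=\sum_{\muhat\in\oP_n}\langle H_\muhat,1\rangle\,s_\muhat,
$$
since $\sum_\lambda z_\lambda^{-1}\prod_i\chi^{\mu^i}(\lambda)$ is the multiplicity of the trivial representation of $\calS_n$ in $H_\muhat=H_{\mu^1}\otimes\cdots\otimes H_{\mu^k}$ under the diagonal action. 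Hence the first factor equals $1+\sum_\muhat\langle H_\muhat,1\rangle\,s_\muhat T^{|\muhat|}$.

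It remains to write the second factor as $1+\sum_\muhat B_\muhat(t)\,s_\muhat T^{|\muhat|}$ with $B_\muhat(t)\in\N[t]$. Granting this, multiplying the two factors and extracting the coefficient of $s_\muhat T^{|\muhat|}$ yields
$$
U_\muhat(t)=\langle H_\muhat,1\rangle+B_\muhat(t)+\sum_{\substack{\nuhat,\etahat\in\oP-\{0\}\\|\nuhat|+|\etahat|=|\muhat|}}\langle H_\nuhat,1\rangle\,c^{\muhat}_{\nuhat,\etahat}\,B_\etahat(t),
$$
where $c^{\muhat}_{\nuhat,\etahat}=\prod_ic^{\mu^i}_{\nu^i,\eta^i}$ is a product of Littlewood--Richardson coefficients; each summand beyond $\langle H_\muhat,1\rangle$ lies in $\N[t]$, so $\langle H_\muhat,1\rangle$ appears as a non-negative summand of the constant term $U_\muhat(0)$. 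The hard part will be the positivity $B_\muhat(t)\in\N[t]$: since $V_\nuhat(t)\in\N[t]$ by Theorem~\ref{theo19}(ii), one expands each $V_\nuhat(t)s_\nuhat$ as a finite sum (with multiplicity) of monomials $t^j s_\nuhat$ and applies the additive plethysm rule $h_n[f+g]=\sum_i h_i[f]h_{n-i}[g]$, reducing everything to the Schur-positivity of the individual plethysms $h_m[s_\nuhat]$; combined with Littlewood--Richardson positivity this gives $B_\muhat(t)\in\N[t]$.
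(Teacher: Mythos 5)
Your proof is correct, but it takes a genuinely different route from the paper's. The paper deduces the proposition from its decomposition $U_\muhat(t)=\sum_{\tomhat\in\otTp}W_\muhat^\tomhat(t)$ indexed by types: each summand $W_\muhat^\tomhat(t)$ lies in $\N[t]$ by Theorem \ref{THEOW} (whose positivity mechanism is the specialization of power sums in Lemma \ref{W} together with the $W_\tomhat$-representation $\C_\tomhat^\muhat$), so there is no cancellation, and the distinguished summand for $\tomhat=\alphahat_0^{\,n}$ with $\alphahat_0=((1),\dots,(1))$ is the constant $\langle\C_\tomhat^\muhat,1\rangle_{W_\tomhat}=\langle H_\muhat,1\rangle$ by Remark \ref{inner}, Proposition \ref{suffcond} and Remark \ref{cases}. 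You instead split off the single factor $\Exp(V_{\alphahat_0}s_{\alphahat_0}T)$ of the product expansion of $\Exp$, compute it in closed form as $1+\sum_\muhat\langle H_\muhat,1\rangle s_\muhat T^{|\muhat|}$ (this is exactly the sum over the fibers $\overline{\mathfrak{H}}{^{-1}}(\alphahat_0^{\,m})$, so your distinguished factor and the paper's distinguished summand coincide term by term), and get positivity of the remaining factor from the Schur-positivity of the plethysms $h_m[s_\nuhat]$ (characters of ${\rm Sym}^m$ of exterior tensor products of irreducible polynomial representations) together with Littlewood--Richardson positivity. This is a clean, self-contained symmetric-function argument that bypasses Lemma \ref{W} and the trace computations of Proposition \ref{LRb}; what it does not give, and what the paper's finer decomposition does, is the identification of \emph{all} the other contributions $W_\muhat^\tomhat(t)$ and their leading terms, which is what drives Theorem \ref{degtheo}. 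Two minor points: your restriction to $g=0$ (so that $V_{\alphahat_0}(t)=1$) is the right reading of the statement and is also implicit in the paper's own argument --- for $g\geq 1$ the term $\langle H_\muhat,1\rangle$ lands in degree $gn$ rather than in the constant term; and the positivity step you defer as ``the hard part'' is in fact standard once reduced to $h_m[s_\nuhat]$, so nothing essential is missing.
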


For $\muhat=(\mu^1,\dots,\mu^k)$ put

$$
\delta(\muhat)=(2g-2+k)n-\sum_{i=1}^k\mu^i_1.
$$

Then $\v_\muhat$ is in the fundamental set of imaginary roots of $\Gamma_\muhat$ if and only if $\delta(\muhat)\geq 0$. Note also that if $g\geq 1$, then $\delta(\muhat)\geq 0$ and so in this case $U_\muhat(t)$ is always non-zero.

We prove the following theorem concerning the degree of $U_\muhat(t)$.

\begin{theorem}(i) If $\v_\muhat\in\Phi(\Gamma_\muhat)$, then the degree of $U_\muhat(t)$ is at least $d_\muhat/2$.

\noindent (ii) If $\delta(\muhat)\geq 2$, then the degree of $U_\muhat(t)$ is exactly $d_\muhat/2$.

\noindent (iii) If $\delta(\muhat)\geq 3$ or $g=0$, $k=3$ and $\delta(\muhat)=2$, then $U_\muhat(t)$ is monic.
\label{deg}\end{theorem}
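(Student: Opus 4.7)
The approach is to unpack the plethystic identity of Proposition~\ref{expgen}. With $F := \sum_{\nuhat} V_\nuhat(t)\, s_\nuhat T^{|\nuhat|}$ and $\Exp(F) = \exp\bigl(\sum_{n\ge 1} \psi_n(F)/n\bigr)$, expanding the ordinary exponential and extracting the coefficient of $s_\muhat T^{|\muhat|}$ yields
$$
U_\muhat(t) \;=\; V_\muhat(t) \;+\; R_\muhat(t),
$$
where $R_\muhat(t)$ is a $\Q$-linear combination of products $\prod_i V_{\nuhat_i}(t^{n_i})^{m_i}$ indexed by \emph{non-trivial} decompositions $\sum_i m_i n_i |\nuhat_i| = |\muhat|$ (excluding the singleton $(1,\muhat,1)$), weighted by the Schur coefficients of $\prod_i\psi_{n_i}(s_{\nuhat_i})^{m_i}$. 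By Theorem~\ref{theo19}(ii), each nonzero $V_\nuhat(t)$ is monic of degree $d_\nuhat/2$, so each such product has $t$-degree at most $\tfrac{1}{2}\sum_i m_i n_i d_{\nuhat_i}$.

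The central computation is to bound this degree uniformly over non-trivial decompositions. Writing $N := |\muhat|$ and $\|\nuhat\|^2 := \sum_{j,l}(\nu^j_l)^2$, one has
$$
d_\muhat - \sum_i m_i n_i\, d_{\nuhat_i} \;=\; (2g-2+k)\Bigl(N^2 - \sum_i m_i n_i|\nuhat_i|^2\Bigr) - \Bigl(\|\muhat\|^2 - \sum_i m_i n_i\|\nuhat_i\|^2\Bigr) + 2 - 2\sum_i m_i.
$$
Using $\sum_i m_i n_i |\nuhat_i| = N$ together with the convexity inequality $\sum_i m_i n_i |\nuhat_i|^2 \leq N \cdot \max_i|\nuhat_i|$ (strict whenever the decomposition is not reduced to the singleton), and the Littlewood--Richardson-type constraints forced by $s_\muhat$ lying in the Schur support of $\prod_i\psi_{n_i}(s_{\nuhat_i})^{m_i}$ (in particular $\mu^j_1 \le \sum_i m_i n_i (\nuhat_i)^j_1$ in each color $j$), the hypothesis $\delta(\muhat)\ge 2$ should yield $\sum_i m_i n_i d_{\nuhat_i} \le d_\muhat - 2$, which gives $\deg R_\muhat < d_\muhat/2$ and proves (ii). The strengthened hypotheses of (iii) will give the strict inequality needed to rule out any contribution at $t^{d_\muhat/2}$ from $R_\muhat$, hence monicity. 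Part (i) then follows: when the hypothesis of (ii) is met, the monomial $t^{d_\muhat/2}$ coming from $V_\muhat(t)$ cannot be cancelled in $U_\muhat(t)$, so $\deg U_\muhat \ge d_\muhat/2$; the remaining real-root and small-$\delta$ cases should be covered by a direct inspection using that $U_\muhat \in \N[t]$ by the preceding theorem.

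The main obstacle will be handling the signed Schur coefficients arising in $\psi_n(s_\nuhat) = s_\nuhat(\x^n)$, since $R_\muhat$ is not term-wise non-negative. One must either re-group the decompositions so that the net coefficient at the top degree is manifestly of a definite sign (following the representation-theoretic strategy by which the preceding theorem establishes $U_\muhat \in \N[t]$), or identify an explicit cancellation in the borderline monicity case $g=0$, $k=3$, $\delta(\muhat)=2$ where the naive bound is sharp. Verifying the required Littlewood--Richardson constraints linking $\muhat$ to the $\nuhat_i$ uniformly across all $k$ colors of the multi-partition is the other delicate combinatorial step.
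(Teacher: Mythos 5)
Your decomposition $U_\muhat=V_\muhat+R_\muhat$ and the quantity you propose to bound, $d_\muhat-\sum_i m_in_i d_{\nuhat_i}$, are exactly the ones the paper works with (it groups the terms of $R_\muhat$ into pieces $W_\muhat^\tomhat$ indexed by $\tomhat=\alphahat_1^{n_1}\cdots\alphahat_s^{n_s}\in\otTp$, each of degree $d_\tomhat/2=\tfrac12\sum_i n_i d_{\alphahat_i}$). But the two steps you flag as ``delicate'' are genuine gaps, and they are precisely where the substance of the proof lies. For the degree bound, the identity actually used is
$$
n\,d_\muhat-n\sum_i d_{\alphahat_i}=\sum_i\sum_{j\neq i}|\alphahat_i|\left(\delta(\muhat)|\alphahat_j|-2\right)+\sigma_\muhat(\muhat)-\sum_i\sigma_\muhat(\alphahat_i),
$$
where the $\alphahat_i$ are listed with multiplicity and $\sigma_\c(\x)=c|\x|^2-|\c|\sum_ix_i^2$. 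The first sum is controlled by $\delta(\muhat)\geq 2$, but the non-negativity of $\sigma_\muhat(\muhat)-\sum_i\sigma_\muhat(\alphahat_i)$ under the dominance $\muhat\unlhd\sum_i\alphahat_i$ (which is what the vanishing of $c_\omhat^\muhat$ forces) is Harcos's inequality (Theorem \ref{harcos}, proved in the appendix of \cite{hausel-letellier-villegas2}). Your convexity inequality together with the first-row constraint does not yield it: the difference $\sum_{j,l}(\mu^j_l)^2-\sum_i\sum_{j,l}(\nu^j_{i,l})^2$ involves all rows and has no definite sign a priori. (A minor slip: the constant term is $2-2\sum_im_in_i$, not $2-2\sum_im_i$, since $\deg\psi_n(V_\nuhat)=n\,d_\nuhat/2$ and each $d_{\nuhat_i}$ carries a $+2$.) Also, the borderline case $g=0$, $k=3$, $\delta(\muhat)=2$ is not settled by an ``explicit cancellation'': the paper shows the inequality is still strict there, because for $g=0$, $k=3$ one has $d_\alphahat\leq 0$ whenever $|\alphahat|\leq 2$, which disposes of the only configurations not already covered by strictness in the general bound.

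The second gap is the sign problem you raise. A degree bound on each signed summand of $R_\muhat$ controls neither possible cancellation against the leading term of $V_\muhat$ nor, for part (i), the survival of that leading term at all. The paper resolves this by your first suggested route: grouping over the fibers of $\overline{\mathfrak{H}}$ and proving (Theorem \ref{THEOW}, via the specialization $p_1(\y_i)\mapsto V_{\alphahat_i}(t)$) that each grouped piece $W_\muhat^\tomhat(t)$ lies in $\N[t]$, with top coefficient $\langle\C_\tomhat^\muhat,1\rangle_{W_\tomhat}\geq 0$ in degree $d_\tomhat/2$ (Proposition \ref{suffcond}). This makes (i) immediate for every root $\v_\muhat$ — no ``direct inspection'' of the small-$\delta$ cases, which form an infinite family — and combines with the degree inequality to give (ii) and (iii). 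As written, your argument defers both points, so it is a correct outline of the paper's strategy rather than a proof.
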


This theorem can be used to reduce the proof of results of the following kind to a finite number of checks (see proof of Corollary \ref{stein}).

\begin{corollary} Let ${\rm St}_n$ denotes the Steinberg character of $\GL_n(\F_q)$. Then for all $n\geq 1$, the inner product $\langle {\rm St}_n\otimes{\rm St}_n\otimes{\rm St}_n,1\rangle$ is a monic polynomial in $q$ of degree $\frac{1}{2}(n-1)(n-2)$.
\end{corollary}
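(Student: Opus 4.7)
The plan is to apply Theorem \ref{deg} to the multi-partition $\muhat = ((1^n),(1^n),(1^n))$ with $k=3$ and $g=0$. Since the Steinberg character of $\GL_n(\F_q)$ is $\calU_{(1^n)}$ and $\calE$ is identically $1$ when $g=0$, we have $U_\muhat(q) = \langle {\rm St}_n\otimes{\rm St}_n\otimes{\rm St}_n,1\rangle$ for every $q$, so it suffices to prove that the polynomial $U_\muhat(t)$ is monic of degree $(n-1)(n-2)/2$.

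First I compute the two invariants that control Theorem \ref{deg}. With $g=0$, $k=3$, and $\mu^i=(1^n)$ for $i=1,2,3$, the formulas from Section 1 give
\[
d_\muhat \;=\; n^2(-2+3) - 3n + 2 \;=\; (n-1)(n-2), \qquad \delta(\muhat) \;=\; n-3,
\]
so the target degree $(n-1)(n-2)/2$ equals $d_\muhat/2$. For $n\geq 5$ one has $\delta(\muhat)\geq 2$, hence Theorem \ref{deg}(ii) yields that $U_\muhat(t)$ is nonzero of degree exactly $d_\muhat/2$. Monicity follows from Theorem \ref{deg}(iii): for $n\geq 6$ one has $\delta(\muhat)\geq 3$, while for $n=5$ one is precisely in the exceptional clause $g=0$, $k=3$, $\delta(\muhat)=2$. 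This settles the claim for all $n\geq 5$.

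It remains to handle the small cases $n\in\{1,2,3,4\}$, which lie outside the scope of Theorem \ref{deg}; this is the main (and purely computational) obstacle. The case $n=1$ is immediate because ${\rm St}_1$ is the trivial character of $\GL_1(\F_q)$, giving $U_\muhat(t)=1$, of degree $0=(1-1)(1-2)/2$. For $n=2,3,4$ one reads off $U_\muhat(t)$ directly from L\"ubeck's tables \cite{Lubeck}, which record the polynomials $U_\muhat(t)$ for $k=3$, $g=0$ and $1\leq n\leq 8$; one verifies that in each of these three cases $U_\muhat(t)$ is monic of degree $(n-1)(n-2)/2$, namely $0$, $1$ and $3$ respectively.
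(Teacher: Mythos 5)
Your proposal is correct and follows essentially the same route as the paper: compute $d_\muhat=(n-1)(n-2)$ and $\delta(\muhat)=n-3$ for $\muhat=((1^n),(1^n),(1^n))$, invoke Theorem \ref{degtheo} for $n\geq 5$ (including the exceptional clause $g=0$, $k=3$, $\delta=2$ for $n=5$), and finish the remaining small cases by a finite check. The only difference is that for $n=2,3,4$ you read the values off L\"ubeck's tables, whereas the paper computes them with its own machinery (as in \S\ref{ex}), which is an inessential variation.
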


\section{Preliminaries}

\subsection{Log and Exp}\label{Log}

Fix an integer $k>0$. Consider $k$ separate sets $\x_1,\x_2,\dots,\x_k$ of infinitely many variables and denote  by   $\Lambda(\x_1,\dots,\x_k):=\Lambda(\x_1)\otimes_\Z\cdots\otimes_\Z\Lambda(\x_k)$ the ring of functions separately symmetric in each set $\x_1,\dots,\x_k$. Put $\Lambda:=\Q(t)\otimes_\Z\Lambda(\x_1,\dots,\x_k)$. 

Consider  $$\psi_n:\Lambda[[T]]\rightarrow\Lambda[[T]],\, f(\x_1,\dots,\x_k;t,T)\mapsto f(\x_1^n,\dots,\x_k^n;t^n,T^n)$$where we denote by $\x^d$ the set of variables $\{x_1^d,x_2^d,\dots\}$. The $\psi_n$ are called the \emph{Adams operations}.

Define $\Psi:T\Lambda[[T]]\rightarrow T\Lambda[[T]]$ by $$\Psi(f)=\sum_{n\geq 1}\frac{\psi_n(f)}{n}.$$Its inverse is given by $$\Psi^{-1}(f)=\sum_{n\geq 1}\mu(n)\frac{\psi_n(f)}{n}$$where $\mu$ is the ordinary M\"obius function. 

Following Getzler \cite{getzler} we define $\Log:1+T\Lambda[[T]]\rightarrow T\Lambda[[T]]$ and its inverse $\Exp:T\Lambda[[T]]\rightarrow 1+T\Lambda[[T]]$ as 

$$\Log(f)=\Psi^{-1}\left(\log(f)\right)$$and $$\Exp(f)=\exp\left(\Psi(f)\right).$$

\begin{lemma} Let $f\in T\Lambda[[T]]$. If $f$ has coefficients in $\Z[t]\otimes_\Z\Lambda(\x_1,\dots,\x_k)\subset\Lambda$, then $\Exp(f)$ has also coefficients in  $\Z[t]\otimes_\Z\Lambda(\x_1,\dots,\x_k)$.
\label{Moz}
\end{lemma}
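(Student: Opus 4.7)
My approach is to reduce to computing $\Exp$ on a single scalar monomial, for which there is a closed form, and then invoke the multiplicativity of $\Exp$ with respect to addition. I will work in the ambient ring $R := \Z[t][[x_{1,1}, x_{1,2}, \ldots, x_{k,1}, x_{k,2}, \ldots]][[T]]$, into which $\Z[t] \otimes_\Z \Lambda(\x_1, \ldots, \x_k)[[T]]$ embeds as the subring of elements separately symmetric in each set $\x_i$. The Adams operations $\psi_n$ extend uniquely to ring endomorphisms of $R$ that raise every generator to its $n$-th power; consequently $\Psi$ and $\Exp$ extend to $TR$ and $1 + TR$ respectively, and agree with the original operators on the symmetric subring.

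For a monomial $u = \x^\alpha t^m T^d$ with $d \geq 1$ and any integer $c$, the identity $\psi_n(u) = u^n$ yields
\[
\Psi(cu) = c\sum_{n \geq 1}\frac{u^n}{n} = -c\log(1-u), \qquad \Exp(cu) = (1-u)^{-c},
\]
which lies in $1 + u\Z[[u]]$ and so has integer coefficients. Expanding $f = \sum_{\alpha,m,d \geq 1} c_{\alpha,m,d}\,\x^\alpha t^m T^d$ with $c_{\alpha,m,d} \in \Z$, additivity of $\Psi$ together with multiplicativity of $\exp$ gives the formal identity
\[
\Exp(f) = \prod_{\alpha,m,d}\bigl(1 - \x^\alpha t^m T^d\bigr)^{-c_{\alpha,m,d}}.
\]
Each factor lies in $1 + TR$ with integer coefficients, and the product converges $T$-adically since every factor is $\equiv 1 \pmod{T}$, so $\Exp(f)$ has integer coefficients as an element of $R$.

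The paper's setup already ensures $\Exp$ maps $T\Lambda[[T]]$ into $1 + T\Lambda[[T]]$, so $\Exp(f)$ is automatically separately symmetric in each $\x_i$; combined with the integrality just established, this places $\Exp(f)$ inside $1 + T \cdot \Z[t] \otimes_\Z \Lambda(\x_1, \ldots, \x_k)[[T]]$, as required. The main point requiring a little care is justifying the $T$-adic convergence of the infinite product and checking that each of its $T^N$-coefficients is a well-defined element of $R$: this is elementary once one filters by $T$-degree and notes that only factors with $d \leq N$ can contribute a non-trivial term modulo $T^{N+1}$, while standard multiset counting ensures each individual monomial in $\x$ and $t$ receives only finitely many contributions.
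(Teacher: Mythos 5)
Your proof is correct, and it is genuinely more self-contained than the paper's. The paper disposes of the lemma in one sentence by remarking that $\Exp$ could equivalently be defined via the $\sigma$-operations (total symmetric powers) rather than the $\psi$-operations, for which integrality is immediate, and it delegates the equivalence of the two definitions to Mozgovoy's paper. What you do is essentially prove that equivalence by hand in the relevant special case: you split $f$ into ``line elements'' (monomials $u=\x^\alpha t^m T^d$ with integer coefficients), observe that $\Exp(cu)=(1-u)^{-c}=\sum_j\binom{c+j-1}{j}u^j$ has integer coefficients, and multiply the factors back together using additivity of $\Psi$. This buys a proof that needs no external reference and no $\lambda$-ring formalism, at the cost of the convergence bookkeeping for the infinite product, which you handle correctly: only factors whose base monomial divides a given target monomial $\x^\beta t^M T^N$ can contribute to its coefficient, and there are finitely many of these since $\beta$ has finite support. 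Two small points worth making explicit if you write this up: (a) the ambient field of the paper's $\Lambda$ is $\Q(t)$, which does not embed in $\Z[t][[\x]]$, so the comparison between your computation in $R$ and the a priori element $\Exp(f)\in 1+T\Lambda[[T]]$ should be made through the intermediate ring $\Q[t]\otimes_\Z\Lambda(\x_1,\dots,\x_k)$, where both computations live and agree; (b) an element of $\Q[t]\otimes_\Z\Lambda(\x_1,\dots,\x_k)$ whose monomial expansion has integer coefficients does lie in $\Z[t]\otimes_\Z\Lambda(\x_1,\dots,\x_k)$ (read off the coefficient of one monomial from each $m_\lambda$), which is the final step you implicitly use. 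Neither point is a gap, just a line each of justification.
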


\begin{proof} We could have defined  $\Exp$ using the $\sigma$-operations instead of the $\psi$-operations in which case the above lemma becomes clear, see for instance \cite{mozgovoy} for more details.
\end{proof}

For $g\in \Lambda$ and $n\geq 1$ we put 

$$
g_n:=\frac{1}{n}\sum_{d | n}\mu(d)\psi_{\frac{n}{d}}(g).
$$
This is the M\"obius inversion formula of $\psi_n(g)=\sum_{d| n}d\cdot g_d$.

We have the following lemma \cite{mozgovoy}.

\begin{lemma} Let $g\in \Lambda$ and $f_1,f_2\in 1+T\Lambda[[T]]$ such that

$$
\log\,(f_1)=\sum_{d=1}^\infty g_d\cdot\log\,(\psi_d(f_2)).
$$
Then 
$$
\Log\,(f_1)=g\cdot \Log\,(f_2).
$$
\label{moz}\end{lemma}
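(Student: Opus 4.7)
The plan is to reduce the desired identity to a direct computation, using two structural facts: each Adams operation $\psi_d$ extends to a ring homomorphism on $\Lambda[[T]]$ (sending $T\mapsto T^d$), so in particular $\log(\psi_d(f_2))=\psi_d(\log f_2)$; and the Möbius inversion formula $\psi_n(g)=\sum_{d\mid n} d\cdot g_d$ already recorded just before the lemma.

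First I would set $h_2:=\log(f_2)$ and $h:=\Psi^{-1}(h_2)=\Log(f_2)$. Since $\Psi$ is a bijection on $T\Lambda[[T]]$, the conclusion $\Log(f_1)=g\cdot h$ is equivalent to $\log(f_1)=\Psi(g\cdot h)$. Using multiplicativity of the $\psi_n$'s I would compute
$$\Psi(g h)=\sum_{n\geq 1}\frac{\psi_n(g)\,\psi_n(h)}{n}.$$
On the other hand, since $\psi_d(h_2)=\psi_d(\Psi(h))=\sum_{m\geq 1}\psi_{dm}(h)/m$ by the defining formula for $\Psi$ and the semigroup law $\psi_d\circ\psi_m=\psi_{dm}$, the hypothesis rewrites as
$$\log(f_1)=\sum_{d\geq 1}g_d\,\psi_d(h_2)=\sum_{d,m\geq 1}\frac{g_d\,\psi_{dm}(h)}{m}=\sum_{n\geq 1}\frac{\psi_n(h)}{n}\sum_{d\mid n} d\cdot g_d,$$
after reindexing $n=dm$. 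Applying $\psi_n(g)=\sum_{d\mid n} d\cdot g_d$ identifies this with $\Psi(gh)$, proving the lemma.

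The argument is essentially bookkeeping of indices rather than a deep one, so there is no real obstacle; the only delicate point is applying the Möbius inversion in the correct direction and verifying that $\psi_d$ commutes with $\log$ on $1+T\Lambda[[T]]$, which follows from its being a continuous ring homomorphism.
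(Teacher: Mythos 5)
Your argument is correct: the three ingredients you isolate ($\psi_d$ is a continuous ring homomorphism commuting with $\log$ and satisfying $\psi_d\circ\psi_m=\psi_{dm}$, the bijectivity of $\Psi$, and the inversion identity $\psi_n(g)=\sum_{d\mid n}d\cdot g_d$) are exactly what is needed, and the reindexing $n=dm$ is the whole content of the computation. The paper itself gives no proof of this lemma, citing it from Mozgovoy, so there is nothing to compare against; your proof is the standard unwinding of the definitions and can stand as is.
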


\subsection{Partitions, types}\label{parttype}

Denote by $\calP$ the set of all partitions including the unique partition $0$ of $0$ and by $\calP_n$ the subset of partitions of $n$.  Partitions $\lambda$ are denoted by $(\lambda_1,\lambda_2,\dots,\lambda_r)$ with $\lambda_1\geq\lambda_2\geq\cdots\geq\lambda_r\geq 0$. We will sometimes write a  partition $\lambda$ as $(1^{m_1},2^{m_2},\dots,i^{m_i})$ where $m_i$ denotes the multiplicity of $i$ in $\lambda$. The \emph{size} of $\lambda$ is defined as $|\lambda|:=\sum_i\lambda_i$. If $d>0$ is an integer and $\lambda=(\lambda_1,\dots,\lambda_r)$ a partition of $n$, then  $d\cdot\lambda:=(d\lambda_1,\dots,d\lambda_r)\in\calP$ is a partition of $dn$. We also define the sum $\lambda+\mu$ of two partitions $\lambda=(\lambda_1,\dots,\lambda_r)$ and $\mu=(\mu_1,\dots,\mu_s)$ as the partition $(\lambda_1+\mu_1,\lambda_2+\mu_2,\dots)$. We consider on $\calP_n$ the partial ordering $\unlhd$ defined as follows. We have $\lambda\unlhd\mu$ if for all $i$,  $\lambda_1+\cdots+\lambda_i\leq\mu_1+\cdots+\mu_i$. We denote by $\oP$ the set of multi-partitions $\muhat=(\mu^1,\mu^2,\dots,\mu^k)\in\calP^k$ such that $|\mu^1|=|\mu^2|=\cdots=|\mu^k|$ and we extend in the obvious way the definitions of $d\cdot\muhat$ (with $d\in\N$) and $\lambdahat+\muhat$. We denote by $\oP_n$ the subset of multi-partitions in $\oP$ of size $n$. Finally we say that $\lambdahat\unlhd\,\muhat$ if and only if $\lambda^i\unlhd\,\mu^i$ for all $i=1,\dots,k$.

We call  \emph{multi-type} a function $\omhat: \N\times\oP\rightarrow \N$ such that its  support $S_\omhat:=\{(d,\muhat)\,|\,\omhat(d,\muhat)\neq 0\}$ is finite and does not contain pairs of the form $(0,\muhat)$ or $(d,0)$. We denote by $0$ the multi-type corresponding to the zero function. The \emph{degrees} of a multi-type $\omhat$ are the integers $d$ such that $(d,\muhat)\in S_\omhat$ for some $\muhat\in\oP$. If the degres of $\omhat$ are all equal to $1$, we say that $\omhat$ is \emph{split}. We call $|\omhat|:=\sum_{(d,\muhat)\in S_\omhat}d\cdot|\muhat|\cdot\omhat(d,\muhat)$ the \emph{size} of $\omhat$.  We denote by $\oT$ the set of all multi-types as above and by $\oT_n$ the subset of multi-types of size $n$. We use the notation $\bT$ (resp. $\bT_n$) instead of $\oT$ (resp. $\oT_n$) for $k=1$, and call simply an element of $\bT$ a \emph{type}.

Note that we have a natural map $\oT\rightarrow \bT^k$ as follows. If $\omhat\in \oT$, then for each $i=1,\dots,k$, we define its $i$-th coordinate  $\omega_i:\N\times \calP\rightarrow \N$ as $\omega_i(d,\mu)=\sum_\muhat\omhat(d,\muhat)$ where the sum is over the elements $\muhat\in \oP$ whose $i$-th coordinate is $\mu$.

Finally for $\omhat\in\oT$ we define the multi-partition $\omhat_+\in\oP$ as 

$$
\omhat_+:=\sum_{(d,\,\muhat)\in S_\omhat}(d\,\omhat(d,\muhat))\cdot\muhat.
$$

Given a family $\{a_\muhat\}_{\muhat\in\oP}$ of elements of $\Lambda$, we extend its definition to multi-types $\omhat\in\oT$ as 

$$
a_\omhat:=\prod_{(d,\muhat)\in S_\omhat}\psi_d(a_\muhat)^{\omhat(d,\muhat)}.
$$

For a multi-type $\omhat\in\oT-\{0\}$, define

$$C_\omhat^o:=\begin{cases}\frac{\mu(d)}{d}(-1)^{r_\omhat-1}\frac{(r_\omhat-1)!}{\prod_\muhat \omhat(d,\muhat)!}\,\text{ if there is no } (d',\muhat)\in S_\omhat \, \text{ with } d'\neq d.\\0\,\text{ otherwise.}
\end{cases}
$$
where $\mu$ is the ordinary M\"obius function and $r_\omhat:=\sum_{(d,\muhat)\in S_\omhat}\omhat(d,\muhat)$.

We have the following lemma \cite[\S 2.3.3]{hausel-letellier-villegas}.

\begin{lemma} 
Let $\{a_\muhat\}_{\muhat\in\oP}$ be a family of elements of  $\Lambda$ with $a_0=1$. Then
\begin{equation}
\Log\left(\sum_{\muhat\in\oP}a_\muhat T^{|\muhat|}\right)=\sum_{\omhat\in\oT-\{0\}}C_\omhat^oa_\omhat T^{|\omhat|}.
\end{equation}\label{Log-w}
\end{lemma}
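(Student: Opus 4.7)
The plan is to unravel the definition $\Log=\Psi^{-1}\circ\log$ and match terms against the combinatorial expression on the right-hand side.

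First I would compute the ordinary logarithm. Writing $F=\sum_{\muhat\in\oP}a_\muhat T^{|\muhat|}=1+\sum_{\muhat\neq 0}a_\muhat T^{|\muhat|}$ and expanding $\log(1+x)=\sum_{r\geq 1}(-1)^{r-1}x^r/r$ together with the multinomial theorem, one gets a sum over tuples $(m_\muhat)_{\muhat\in\oP-\{0\}}$ of non-negative integers with finite support. Each such tuple corresponds bijectively to a non-zero \emph{split} multi-type $\omega$ via $\omega(1,\muhat)=m_\muhat$, and under this identification $r=r_\omega$, $\prod_\muhat a_\muhat^{m_\muhat}=a_\omega$, $\sum_\muhat m_\muhat|\muhat|=|\omega|$. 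Since $\mu(1)=1$, the coefficient matches the definition of $C_\omega^o$, so
$$
\log F=\sum_{\substack{\omega\in\oT-\{0\}\\ \omega\text{ split}}}C_\omega^o\,a_\omega T^{|\omega|}.
$$

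Next I would apply $\Psi^{-1}=\sum_{n\geq 1}\frac{\mu(n)}{n}\psi_n$. Since $\psi_n$ sends $\x_i\mapsto\x_i^n$, $t\mapsto t^n$, $T\mapsto T^n$, and acts on each $a_\muhat$ by $\psi_n(a_\muhat)$, for a split $\omega$ we get
$$
\psi_n\bigl(a_\omega T^{|\omega|}\bigr)=\prod_\muhat \psi_n(a_\muhat)^{\omega(1,\muhat)}\,T^{n|\omega|}.
$$
I would then associate to the pair $(n,\omega)$ (with $\omega$ split, non-zero) the multi-type $\omhat'$ supported only in degree $n$ defined by $\omhat'(n,\muhat)=\omega(1,\muhat)$. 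By construction $a_{\omhat'}=\psi_n(a_\omega)$, $|\omhat'|=n|\omega|$, and $r_{\omhat'}=r_\omega$. Comparing with the definition of $C_{\omhat'}^o$ gives
$$
\frac{\mu(n)}{n}\,C_\omega^o=\frac{\mu(n)}{n}(-1)^{r_\omega-1}\frac{(r_\omega-1)!}{\prod_\muhat\omega(1,\muhat)!}=C_{\omhat'}^o.
$$

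Finally I would observe that the map $(n,\omega)\mapsto\omhat'$ is a bijection between pairs $(n\geq 1,\omega\in\oT-\{0\}\text{ split})$ and non-zero multi-types $\omhat\in\oT-\{0\}$ whose support is concentrated in a single degree. Multi-types not of this form have $C_\omhat^o=0$ by definition, so they automatically contribute nothing. Summing over $n$ and $\omega$ therefore yields
$$
\Log(F)=\sum_{n\geq 1}\sum_{\substack{\omega\neq 0\\ \omega\text{ split}}}C_{\omhat'}^o\,a_{\omhat'}T^{|\omhat'|}=\sum_{\omhat\in\oT-\{0\}}C_\omhat^o\,a_\omhat T^{|\omhat|},
$$
which is the claimed identity. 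The only real obstacle is bookkeeping: one must be careful that the bijection between $(n,\omega)$ and single-degree multi-types $\omhat$ is indeed a bijection and that the factor $\mu(n)/n$ produced by $\Psi^{-1}$ exactly fills in the difference between $C_\omega^o$ (the split case) and $C_{\omhat'}^o$ (the degree-$n$ case). Everything else is formal manipulation of power series.
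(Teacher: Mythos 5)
Your proof is correct: the paper itself gives no argument for this lemma (it simply cites \cite[\S 2.3.3]{hausel-letellier-villegas}), and your computation is the standard derivation — expand $\log$ via the multinomial theorem to identify the split multi-types with coefficient $C_\omega^o$, then observe that $\Psi^{-1}=\sum_n\frac{\mu(n)}{n}\psi_n$ converts each pair $(n,\omega)$ into the unique single-degree multi-type $\omhat'$ with $C_{\omhat'}^o=\frac{\mu(n)}{n}C_\omega^o$, all other multi-types having $C_\omhat^o=0$ by definition. All the identifications ($r_{\omhat'}=r_\omega$, $a_{\omhat'}=\psi_n(a_\omega)$, $|\omhat'|=n|\omega|$) check out, so there is nothing to fix.
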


For a multi-type $\omhat\in\oT$, define

$$
A_\omhat^o:=\prod_{(r,\muhat)\in S_\omhat}\frac{1}{r^{\omhat(r,\muhat)}\omhat(r,\muhat)!}.
$$

The following lemma is also straightforward.

\begin{lemma}Let $\{a_\muhat\}_{\muhat\in\oP}$ be a family of elements of  $\Lambda$ with $a_0=1$. Then 

$$
\Exp\left(\sum_{\muhat\in\oP-\{0\}}a_\muhat T^{|\muhat|}\right)=\sum_{\omhat\in\oT}A_\omhat^oa_\omhat T^{|\omhat|}.
$$
\end{lemma}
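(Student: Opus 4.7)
The plan is a direct unwinding of the definitions; no real obstacle, only bookkeeping. Set
$$
f=\sum_{\muhat\in\oP-\{0\}}a_\muhat T^{|\muhat|}\in T\Lambda[[T]].
$$
First I would apply the Adams operation termwise to get $\psi_n(f)=\sum_{\muhat\neq 0}\psi_n(a_\muhat)T^{n|\muhat|}$, and then sum to obtain
$$
\Psi(f)=\sum_{n\geq 1}\frac{\psi_n(f)}{n}=\sum_{n\geq 1}\sum_{\muhat\in\oP-\{0\}}\frac{1}{n}\,\psi_n(a_\muhat)\,T^{n|\muhat|}.
$$
Note each monomial appearing in $\Psi(f)$ is naturally indexed by a pair $(n,\muhat)$ with $n\geq 1$ and $\muhat\neq 0$, which is exactly the data of a single point in the support of a multi-type.

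Next I would expand $\Exp(f)=\exp(\Psi(f))=\sum_{N\geq 0}\frac{1}{N!}\Psi(f)^N$. Writing the $N$-th power as a multinomial sum over ordered $N$-tuples $((n_1,\muhat_1),\dots,(n_N,\muhat_N))$ of such pairs, one gets
$$
\Psi(f)^N=\sum_{(n_j,\muhat_j)}\prod_{j=1}^{N}\frac{\psi_{n_j}(a_{\muhat_j})}{n_j}\;T^{\sum_j n_j|\muhat_j|}.
$$
Now I would group ordered tuples of length $N$ by the multi-type $\omhat:\N\times\oP\to\N$ they represent, $\omhat(d,\muhat)$ counting how many times the pair $(d,\muhat)$ occurs. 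Each multi-type $\omhat$ with $\sum_{(d,\muhat)\in S_\omhat}\omhat(d,\muhat)=N$ arises from exactly $N!/\prod_{(d,\muhat)\in S_\omhat}\omhat(d,\muhat)!$ ordered tuples, and the contribution of each such tuple equals $\prod_{(d,\muhat)\in S_\omhat}\bigl(d^{-1}\psi_d(a_\muhat)\bigr)^{\omhat(d,\muhat)}\,T^{|\omhat|}$, using $|\omhat|=\sum_{(d,\muhat)\in S_\omhat}d\,|\muhat|\,\omhat(d,\muhat)$ from the definition.

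Finally I would divide by $N!$ and sum over $N\geq 0$: the $N!$ factors cancel, and I obtain
$$
\Exp(f)=\sum_{\omhat\in\oT}\left(\prod_{(d,\muhat)\in S_\omhat}\frac{1}{d^{\omhat(d,\muhat)}\omhat(d,\muhat)!}\right)\prod_{(d,\muhat)\in S_\omhat}\psi_d(a_\muhat)^{\omhat(d,\muhat)}\;T^{|\omhat|}=\sum_{\omhat\in\oT}A_\omhat^o\,a_\omhat\,T^{|\omhat|},
$$
recognizing the bracketed product as $A_\omhat^o$ and the rest as $a_\omhat$ by the conventions fixed at the start of \S\ref{parttype}. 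The empty multi-type $\omhat=0$ contributes $1$ (the $N=0$ term), matching the convention $a_0=1$. This completes the proof.
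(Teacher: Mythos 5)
Your proof is correct. The paper states this lemma without proof (``The following lemma is also straightforward''), and your computation --- expanding $\exp(\Psi(f))$, indexing the monomials of $\Psi(f)$ by pairs $(n,\muhat)$, and grouping ordered $N$-tuples of such pairs by the multi-type $\omhat$ they determine, with the multinomial coefficient $N!/\prod\omhat(d,\muhat)!$ cancelling the $1/N!$ to produce exactly $A_\omhat^o a_\omhat$ --- is precisely the intended straightforward argument.
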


The formal power series $\sum_{n\geq 0}a_nT^n$ with $a_n\in \Lambda$
that we will consider in what follows will all have $a_n$ homogeneous
of degree $n$ in the variables $\x_1,\dots,\x_k$. Hence we will typically  scale the variables of
$\Lambda$ by $1/T$ and eliminate $T$ altogether.

\subsection{Littlewood-Richardson coefficients}\label{LR}

For a partition $\lambda\in\calP$ we denote by $s_\lambda(\x)\in\Lambda(\x)$ the corresponding Schur function. For a type $\omega\in\bT$ and a partition $\mu\in\calP$, define $c_\omega^\mu\in\Z$ by

$$
s_\omega=\sum_{\mu\unlhd\,\omega_+}c_\omega^\mu s_\mu.
$$
Note that $c_\omega^\mu=0$ unless $|\omega|=|\mu|$. If $\omega$ is split, then $c_\omega^\mu$ is a so-called \emph{Littlewood-Richardson coefficient}.

For an integer $n>0$, we denote by $\calS_n$ the symmetric group in $n$ letters.

For a finite dimensional $\C$-vector space $V$ and an integer $d>0$, we put $T^dV:=V\otimes \cdots\otimes V$ with $V$ repeated $d$ times.

For a partition $\lambda$, we denote by $H_\lambda$ an irreducible $\C[\calS_{|\lambda|}]$-module corresponding to the irreducible character $\chi^\lambda$ of $\calS_{|\lambda|}$. Here we use the same parametrization $\lambda\mapsto \chi^\lambda$ as in  \cite{macdonald}; the trivial character of $\calS_n$ corresponds to the partition $(n)$.

Define $\tT$ as the set of functions $\tomhat:\calP\rightarrow \N$ whose support $S_{\tomega}:=\{\mu\,|\,\tomega(\mu)\neq 0\}$ is finite and does not contain the $0$ element of $\calP$. 

Note that we have a natural map $\mathfrak{H}:\bT\rightarrow\tT$ that maps $\omega$ to the function $\tomega$ defined by $\tomega(\mu)=\sum_dd\cdot \omega(d,\mu)$.

Given a type $\tomega\in\tT$, we put

$$
H_\tomega:=\bigotimes_{\mu\in S_\tomhat}T^{\tomega(\mu)}H_\mu, \hspace{1cm}\calS_\tomega:=\prod_{\mu\in S_\tomega}(\calS_{|\mu|})^{\tomega(\mu)}, \hspace{1cm}W_\tomega:=\prod_{\mu\in S_\tomega}\calS_{\tomega(\mu)}.
$$
The elements of the fiber $\mathfrak{H}{^{-1}}(\tomhat)$ are in bijection with $\prod_{\mu\in S_\tomega}\calP_{\tomega(\mu)}$ and so with the conjugacy classes of $W_\tomega$.  For $\tomega\in\tT$ and $\mu\in\calP$ we define 

$$
\C_\tomega^\mu:={\rm Hom}_{\calS_{|\tomega|}}\left({\rm Ind}_{\calS_\tomega}^{\calS_{|\tomhat|}}(H_\tomega),H_\mu\right).
$$

 The normalizer $N_{\calS_{|\tomhat|}}(\calS_\tomega)$ of $\calS_\tomega$ in $\calS_{|\tomega|}$ acts on the set of representations of $\calS_\tomega$ on the left  as $\sigma\cdot\rho:=\rho\circ\sigma^{-1}$. Then we have an isomorphism

$$
W_\tomega\simeq \left.\left\{\sigma\in N_{\calS_{|\tomega|}}(\calS_\tomega)\,\right|\,\sigma\cdot\rho_\tomega\simeq\rho_\tomega\right\}/\calS_\tomega.
$$

By \cite[\S 6.2]{letellier4}, the group $W_\tomega$ acts on the space $\C_\tomega^\mu$ and we have the following proposition.

\begin{proposition} For all $v\in W_\tomega$ we have 

$$
{\rm Tr}\,\left(v\,\left|\,\C_\tomega^\mu\right)\right.=c_\omega^\mu
$$
where $\omega\in\bT$ is the element in the fiber $\mathfrak{H}{^{-1}}(\tomega)$ which corresponds to the conjugacy class of $v$. 
\label{let}\end{proposition}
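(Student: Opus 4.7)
The plan is to compute, for every $v\in W_\tomega$, the Frobenius characteristic of $\Ind_{\calS_\tomega}^{\calS_{|\tomega|}}H_\tomega$ viewed as a $(\calS_{|\tomega|}\times W_\tomega)$-module, and to show that its specialization to $v$ equals $s_\omega(\x)$, where $\omega\in\mathfrak{H}^{-1}(\tomega)$ is the type encoding the cycle type of $v$. Since this Frobenius characteristic also equals $\sum_\lambda s_\lambda(\x)\,\Tr(v\mid\C_\tomega^\lambda)$ by the definition of $\C_\tomega^\lambda$ as a multiplicity space, extracting the coefficient of $s_\lambda$ on both sides then yields $\Tr(v\mid\C_\tomega^\lambda)=c_\omega^\lambda$.

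First I would fix the $W_\tomega$-action concretely: $H_\tomega=\bigotimes_\mu T^{\tomega(\mu)}H_\mu$ extends canonically from a $\calS_\tomega$-module to an $N$-module with $N=N_{\calS_{|\tomega|}}(\calS_\tomega)\simeq \calS_\tomega\rtimes W_\tomega$, by letting $W_\tomega=\prod_\mu\calS_{\tomega(\mu)}$ permute the tensor factors within each block $T^{\tomega(\mu)}H_\mu$. The action on $\C_\tomega^\mu$ from \cite[\S 6.2]{letellier4} is then $(v\cdot\phi)(x)=\tilde v\,\phi(\tilde v^{-1}x)$ for any lift $\tilde v\in N$, and is independent of the lift because lifts differ by elements of $\calS_\tomega$. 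A standard coset-by-coset analysis of $\Ind_{\calS_\tomega}^{\calS_{|\tomega|}}H_\tomega$ then yields the twisted Frobenius formula
\begin{equation*}
\chi_{\Ind}(g,v)=\frac{1}{|\calS_\tomega|}\sum_{\substack{x\in\calS_{|\tomega|}\\ x^{-1}gx\in\calS_\tomega\tilde v}}\chi_{H_\tomega}(x^{-1}gx).
\end{equation*}
Multiplying by $p_{\mathrm{ctype}(g)}(\x)/|\calS_{|\tomega|}|$, summing over $g\in\calS_{|\tomega|}$, and changing variables to $h:=x^{-1}gx$ collapses the double sum to
\begin{equation*}
\sum_\lambda s_\lambda(\x)\,\Tr(v\mid\C_\tomega^\lambda)=\frac{1}{|\calS_\tomega|}\sum_{h\in\calS_\tomega\tilde v}p_{\mathrm{ctype}(h)}(\x)\,\chi_{H_\tomega}(h).
\end{equation*}
Both $\chi_{H_\tomega}$ and $\mathrm{ctype}$ factor over the indices $\mu\in S_\tomega$, so this right-hand side factors as a product over $\mu$, reducing the problem to the case where $\tomega$ is supported on a single partition.

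In the single-type case $\calS_\tomega=\calS_{|\mu|}^m$, $W_\tomega=\calS_m$ with $m=\tomega(\mu)$, write $h=(k_1,\dots,k_m)\tilde v$ with $k_i\in\calS_{|\mu|}$ and let $\nu=(\nu_1,\dots,\nu_r)$ be the cycle type of $v$. Two elementary facts finish the computation. First, for each cycle $C=(i_1,\dots,i_{\nu_C})$ of $v$, the restriction of $h$ to the union of the corresponding blocks is a permutation of cycle type $\nu_C\cdot\mathrm{ctype}(k_{i_1}\cdots k_{i_{\nu_C}})$. Second, by the wreath-product character formula, $\chi_{T^m H_\mu}(h)=\prod_C\chi^\mu(k_{i_1}\cdots k_{i_{\nu_C}})$. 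Using $p_{\ell\cdot\tau}(\x)=p_\tau(\x^\ell)$ and the fact that as $(k_{i_j})_{j=1}^{\ell}$ ranges over $\calS_{|\mu|}^\ell$ the product $k_{i_1}\cdots k_{i_\ell}$ is uniformly distributed in $\calS_{|\mu|}$, the sum over each cycle collapses to
\begin{equation*}
\tfrac{1}{|\mu|!}\sum_{k\in\calS_{|\mu|}}p_{\mathrm{ctype}(k)}(\x^{\nu_C})\,\chi^\mu(k)=s_\mu(\x^{\nu_C})=\psi_{\nu_C}(s_\mu)(\x),
\end{equation*}
by the classical Frobenius characteristic formula. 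Taking the product over cycles $C$ and then over $\mu\in S_\tomega$ gives exactly $\prod_\mu\prod_i s_\mu(\x^{(\nu_\mu)_i})=\prod_{(d,\mu)\in S_\omega}\psi_d(s_\mu)^{\omega(d,\mu)}=s_\omega(\x)$, as desired.

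The main obstacle is the careful bookkeeping around the lift $\tilde v$ and the verification of the twisted Frobenius formula with the chosen $N$-extension of $H_\tomega$; once that is in place, the rest is a clean combination of the wreath-product character formula, the plethystic identity $\psi_d(s_\mu)=s_\mu(\x^d)$, and the usual Frobenius characteristic.
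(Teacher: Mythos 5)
Your proposal is correct. Note that the paper itself does not prove Proposition \ref{let}: it is imported wholesale from \cite[\S 6.2]{letellier4}, where both the $W_\tomega$-action on $\C_\tomega^\mu$ and the trace identity are established. So your argument is a genuine self-contained substitute rather than a retracing of the paper. Concretely, you (a) pin down the action by extending $H_\tomega$ to $\calS_\tomega\rtimes W_\tomega$ via permutation of the equal tensor factors in each block $T^{\tomega(\mu)}H_\mu$ (which is the canonical choice implicit in the paper's description of $W_\tomega$ as a stabilizer quotient inside $N_{\calS_{|\tomega|}}(\calS_\tomega)/\calS_\tomega$), (b) use the twisted induced-character formula to write $\sum_\lambda s_\lambda(\x)\,\Tr(v\,|\,\C_\tomega^\lambda)$ as an average of $p_{\mathrm{ctype}(h)}\chi_{H_\tomega}(h)$ over the coset $\calS_\tomega\tilde v$, and (c) evaluate that average by the wreath-product trace identity $\Tr(A_{1}\otimes\cdots\otimes A_m\circ P_v)=\prod_C\Tr(A_{i_1}\cdots A_{i_\ell})$ together with $p_{\ell\cdot\tau}(\x)=p_\tau(\x^\ell)$ and the Frobenius characteristic formula, landing exactly on $s_\omega=\prod_{(d,\mu)\in S_\omega}\psi_d(s_\mu)^{\omega(d,\mu)}$ for the $\omega\in\mathfrak{H}^{-1}(\tomega)$ indexed by the cycle type of $v$. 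All the ingredients are standard and correctly applied (in particular the uniform-distribution argument for the cycle products and the factorization over $\mu\in S_\tomega$ are sound). The only caveat worth flagging is the one you already flag: the proposition is a statement about the specific action constructed in \cite{letellier4}, so a complete write-up must verify that your canonical extension agrees with that construction; with any other extension the trace could be twisted by a linear character of $W_\tomega$. What your route buys is independence from the reference and an explicit interpretation of $c_\omega^\mu$ as a graded multiplicity in a bimodule; what the paper's route buys is brevity.
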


We now extend this proposition to the case of multi-partitions.

Given any family $\{a_\mu\}$ of symmetric functions
indexed by partitions $\mu\in \P$ and a multi-partition  $\muhat=(\mu^1,\dots,\mu^k) \in
\oP$ define
$$
a_\muhat:=a_{\mu^1}(\x_1)\cdots a_{\mu^k}(\x_k).
$$

For a multi-type $\omhat\in\oT$ and a multi-partition $\muhat\in\oP$, we denote by $c_\omhat^\muhat$ the integer defined by 

$$
s_\omhat=\sum_{\muhat\unlhd\,\omhat_+} c_\omhat^\muhat s_\muhat.
$$

\begin{remark}Denote by $(\omega_1,\dots,\omega_k)$ the coordinates of $\omhat$ in $\bT^k$. Then 

$$
s_\omhat=s_{\omega_1}(\x_1)\cdots s_{\omega_k}(\x_k),
$$
and so we see that the coefficient $c_\omhat^\muhat$ is a product $c_{\omega_1}^{\mu^1}\cdots c_{\omega_k}^{\mu^k}$ of coefficients $c_{\omega_i}^{\mu^i}$ defined above where $\mu^1,\dots,\mu^k$ are the coordinates of $\muhat$.\label{rem1}\end{remark}

Define $\otT$ as the set of functions $\tomhat:\oP\rightarrow \N$ whose support $S_{\tomhat}:=\{\muhat\,|\,\tomhat(\muhat)\neq 0\}$ is finite and does not contain the $0$ element of $\oP$. If $k=1$, then $\otT$ is simply $\tT$ defined above. We then have a natural map $\otT\rightarrow (\tT)^k$ that sends $\tomhat$ to $(\omega_1^o,\dots,\omega_k^o)$ with $\omega_i^o(\mu):=\sum_\muhat\tomhat(\muhat)$ where the sum is over the elements $\muhat\in\oP$ whose $i$-th coordinate is $\mu$. We call $\omega_i^o$ the $i$-th coordinate of $\tomhat$.

Given a multi-type $\tomhat\in\otT$, we put

$$
W_\tomhat:=\prod_{\muhat\in S_\tomhat}\calS_{\tomhat(\muhat)}.
$$

For all $i=1,\dots,k$, the group $W_\tomhat$ is a subgroup of $W_{\omega_i^o}$.

Consider the map $\overline{\mathfrak{H}}:\oT\rightarrow\otT$ that maps $\omhat$ to the function $\tomhat$ defined by $\tomhat(\muhat)=\sum_dd\cdot \omhat(d,\muhat)$. The elements of the fiber $\overline{\mathfrak{H}}{^{-1}}(\tomhat)$ are then in bijection with $\prod_{\muhat\in S_\tomhat}\calP_{\tomhat(\muhat)}$ and so with the conjugacy classes of $W_\tomhat$.

For $\omhat^o\in\otT$ with coordinates $(\omega_1^o,\dots,\omega_k^o)$ and $\muhat=(\mu^1,\dots,\mu^k)\in\oP$ we define 

$$
\C_\tomhat^\muhat:=\bigotimes_{i=1}^{k}\C_{\omega_i^o}^{\mu^i}
$$

The group $W_{\omega_1^o}\times\cdots \times W_{\omega_k^o}$ acts on $\C_\tomhat^\muhat$ and  so does the group  $W_\tomhat$  via its diagonal embedding in  $W_{\omega_1^o}\times\cdots\times W_{\omega_k^o}$.

The following proposition is a consequence of Proposition \ref{let} and Remark \ref{rem1}.

\begin{proposition} For all $v\in W_\tomhat$ we have 

$$
{\rm Tr}\,\left(v\,\left|\,\C_\tomhat^\muhat\right)\right.=c_\omhat^\muhat
$$
where $\omhat\in\oT$ is the element in the fiber $\overline{\mathfrak{H}}{^{-1}}(\tomhat)$ which corresponds to the conjugacy class of $v$. 

\label{LRb}\end{proposition}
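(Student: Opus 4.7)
The strategy is to reduce the multi-partition statement to the single-partition case (Proposition \ref{let}) via the tensor product definition of $\C_\tomhat^\muhat$ and the factorization of $c_\omhat^\muhat$ recorded in Remark \ref{rem1}. The trace of an element acting diagonally on a tensor product is the product of the traces on the factors, so the computation splits into $k$ independent problems.

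First I would fix notation: write $\tomhat$ with coordinates $(\omega_1^o,\dots,\omega_k^o)\in (\tT)^k$, and pick $\omhat\in\overline{\mathfrak{H}}^{-1}(\tomhat)$ with coordinates $(\omega_1,\dots,\omega_k)\in\bT^k$. Given $v\in W_\tomhat$, I would denote its image under the diagonal embedding $W_\tomhat\hookrightarrow W_{\omega_1^o}\times\cdots\times W_{\omega_k^o}$ by $(v_1,\dots,v_k)$. Since the $W_\tomhat$-action on $\C_\tomhat^\muhat=\bigotimes_{i=1}^k \C_{\omega_i^o}^{\mu^i}$ is defined so that $v$ acts as $v_1\otimes\cdots\otimes v_k$, the multiplicativity of trace on tensor products gives
$$
\Tr\bigl(v\,\bigm|\,\C_\tomhat^\muhat\bigr)=\prod_{i=1}^k\Tr\bigl(v_i\,\bigm|\,\C_{\omega_i^o}^{\mu^i}\bigr).
$$

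The crux is then to identify, for each $i$, the conjugacy class of $v_i$ in $W_{\omega_i^o}$ with the type $\omega_i\in\mathfrak{H}^{-1}(\omega_i^o)$. Under the bijection $\overline{\mathfrak{H}}^{-1}(\tomhat)\cong\prod_{\muhat\in S_\tomhat}\calP_{\tomhat(\muhat)}$, the multi-type $\omhat$ corresponds to the tuple $(\nu^\muhat)_\muhat$, where $\nu^\muhat$ is the partition of $\tomhat(\muhat)$ in which each integer $d$ appears with multiplicity $\omhat(d,\muhat)$. Writing $W_{\omega_i^o}=\prod_\mu\calS_{\omega_i^o(\mu)}$ and using that $\omega_i^o(\mu)=\sum_{\muhat:\,\mu^i=\mu}\tomhat(\muhat)$, the component of $v_i$ in $\calS_{\omega_i^o(\mu)}$ has cycle type obtained by concatenating the $\nu^\muhat$ for $\muhat$ with $\mu^i=\mu$. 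Therefore the number of $d$-cycles of this component is $\sum_{\muhat:\,\mu^i=\mu}\omhat(d,\muhat)$, which is exactly $\omega_i(d,\mu)$ by the definition of the map $\oT\to\bT^k$. This identifies the conjugacy class of $v_i$ with $\omega_i$.

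Applying Proposition \ref{let} to each factor yields $\Tr(v_i\mid \C_{\omega_i^o}^{\mu^i})=c_{\omega_i}^{\mu^i}$, and invoking Remark \ref{rem1} to recognize $\prod_i c_{\omega_i}^{\mu^i}=c_\omhat^\muhat$ finishes the argument. The only real obstacle is the bookkeeping in the previous paragraph: one must check carefully that the natural map $\oT\to\bT^k$, the diagonal embedding $W_\tomhat\hookrightarrow\prod_i W_{\omega_i^o}$, and the cycle-type parametrizations of conjugacy classes in both groups are all compatible so that the conjugacy class of $v_i$ really is $\omega_i$. Once that compatibility is verified, no further input is needed.
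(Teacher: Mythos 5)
Your proposal is correct and follows exactly the route the paper intends: the paper gives no written proof beyond asserting that the proposition "is a consequence of Proposition \ref{let} and Remark \ref{rem1}," and your argument supplies precisely the multiplicativity of trace on the tensor factors, the compatibility check identifying the conjugacy class of $v_i$ in $W_{\omega_i^o}$ with the coordinate $\omega_i$ of $\omhat$, and the factorization $c_\omhat^\muhat=\prod_i c_{\omega_i}^{\mu^i}$ from Remark \ref{rem1}. The bookkeeping you carry out (matching cycle multiplicities $\sum_{\muhat:\,\mu^i=\mu}\omhat(d,\muhat)=\omega_i(d,\mu)$) is exactly the verification the paper leaves implicit.
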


\subsection{A technical result}\label{tech}

Assume given a family  $\{V_\muhat(t)\}_{\muhat\in\oP}$ of polynomials in $\Z[t]$ indexed by $\oP$. Let $\{U_\muhat(t)\}_{\muhat\in\oP-\{0\}}$ be the family defined by $$\Exp\left(\sum_{\muhat\in\oP-\{0\}}V_\muhat(t)s_\muhat\right)=1+\sum_{\muhat\in\oP-\{0\}} U_\muhat(t)s_\muhat.$$By Lemma \ref{Moz} we have $U_\muhat(t)\in\Z[t]$.

The aim of this section is to study the properties of the polynomials $U_\muhat(t)\in\Z[t]$ in terms of those of $V_\muhat(t)$.

 We have 

\begin{align*}
U_\muhat(t)&=\left\langle\sum_{\omhat\in\oT}A_\omhat^oV_\omhat(t) s_\omhat,s_\muhat\right\rangle\\
&=\sum_{\omhat\in\oT}A_\omhat^oV_\omhat(t)c_\omhat^\muhat\\
&=\sum_{\tomhat\in\otT}\sum_{\omhat\in \overline{\mathfrak{H}}{^{-1}}(\tomhat)}A_\omhat^oV_\omhat(t)c_\omhat^\muhat\\
&=\sum_{\tomhat\in\otT}W_\muhat^\tomhat(t)
\end{align*}
where
$$
W_\muhat^\tomhat(t):=\sum_{\omhat\in \overline{\mathfrak{H}}{^{-1}}(\tomhat)}A_\omhat^oV_\omhat(t)c_\omhat^\muhat.
$$
Now fix $\tomhat\in\otT$. Write  $S_\tomhat=\{\alphahat_1,\dots,\alphahat_s\}$ and put $n_i:=\tomhat(\alphahat_i)$.

Recall that the elements of $\overline{\mathfrak{H}}{^{-1}}(\tomhat)$ are naturally parameterized by the set $\calP_{n_1}\times\cdots\times\,\calP_{n_s}$. If $\omhat\in\overline{\mathfrak{H}}{^{-1}}(\tomhat)$ corresponds to $(\lambda^1,\dots,\lambda^s)\in\calP_{n_1}\times\cdots\times\,\calP_{n_s}$, then $A_\omhat^o=z_{\lambda^1}\cdots z_{\lambda^s}$ where for a partition $\lambda=(1^{m_1},2^{m_2},\dots)$ we put 

$$
z_\lambda:=\prod_{i\geq 1}i^{m_i}\cdot m_i!.
$$
Recall that $z_\lambda$ is the cardinality of the centralizer in $\calS_{|\lambda|}$ of an element of type $\lambda$. For a partition $\lambda$, denote by $p_\lambda(\x)\in\Lambda(\x)$ the corresponding power symmetric function in the infinite set of variables $\x=\{x_1,x_2,\dots\}$. Then $p_1(\x)=x_1+x_2+\cdots$. Let $\y_1,\dots,\y_s$ be $s$ independent sets of infinitely many variables.

Consider

$$
W_\muhat^\tomhat(\y_1,\dots,\y_k):=\sum_{(\lambda^1,\dots,\lambda^s)\in\calP_{n_1}\times\cdots\times\,\calP_{n_s}}\frac{1}{z_{\lambda^1}\cdots z_{\lambda^s}}\,p_{\lambda^1}(\y_1)\cdots p_{\lambda^s}(\y_s)\,{\rm Tr}\,\left(v_{(\lambda^1,\dots,\lambda^s)}\,\left|\, \C_\tomhat^\muhat\right)\right.$$where $v_{(\lambda^1,\dots,\lambda^s)}\in W_\tomhat$ is a representative of the conjugacy class of $W_\tomhat$ corresponding to $(\lambda^1,\dots,\lambda^s)$. 

\begin{lemma}Assume that for all $i=1,\dots,s$,  the polynomial $V_{\alphahat_i}(t)$ has non-negative integer coefficients, then for an appropriate specialization of the variables $\y_1,\dots,\y_s$, we have 

$$
W_\muhat^\tomhat(\y_1,\dots,\y_k)=W_\muhat^\tomhat(t).
$$
\label{W}
\end{lemma}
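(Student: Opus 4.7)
The plan is to match the two expressions for $W_\muhat^\tomhat$ termwise via the bijection between $\overline{\mathfrak{H}}^{-1}(\tomhat)$ and $\calP_{n_1}\times\cdots\times\calP_{n_s}$, then exhibit the specialization making $V_\omhat(t)$ coincide with a product of power sums $p_{\lambda^1}(\y_1)\cdots p_{\lambda^s}(\y_s)$.

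First I would unpack the three ingredients of each summand in $W_\muhat^\tomhat(t)=\sum_{\omhat\in\overline{\mathfrak{H}}^{-1}(\tomhat)}A_\omhat^o V_\omhat(t) c_\omhat^\muhat$. If $\omhat$ corresponds to $(\lambda^1,\dots,\lambda^s)$ with $\lambda^i=(1^{m_1^i},2^{m_2^i},\dots)$, i.e.\ $\omhat(d,\alphahat_i)=m_d^i$ for $i=1,\dots,s$, then directly from the definitions one checks $A_\omhat^o=\prod_{i=1}^s\prod_{d\ge 1}\frac{1}{d^{m_d^i}m_d^i!}=1/(z_{\lambda^1}\cdots z_{\lambda^s})$ and $V_\omhat(t)=\prod_{i=1}^s\prod_{d\ge 1}\psi_d(V_{\alphahat_i}(t))^{m_d^i}=\prod_{i=1}^s\prod_{d\ge 1}V_{\alphahat_i}(t^d)^{m_d^i}$. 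Moreover, by Proposition \ref{LRb}, the trace $\Tr(v_{(\lambda^1,\dots,\lambda^s)}\,|\,\C_\tomhat^\muhat)$ equals precisely $c_\omhat^\muhat$.

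The core step is the specialization of the $\y_i$. Using the non-negativity hypothesis, I would write, for each $i=1,\dots,s$, the polynomial $V_{\alphahat_i}(t)\in\N[t]$ as an unordered sum of monomials
\[
V_{\alphahat_i}(t)=t^{a_{i,1}}+t^{a_{i,2}}+\cdots+t^{a_{i,N_i}},\qquad N_i=V_{\alphahat_i}(1),
\]
where each exponent $j$ appears with multiplicity equal to the corresponding coefficient of $V_{\alphahat_i}(t)$. I then specialize the infinite set $\y_i=(y_{i,1},y_{i,2},\dots)$ by
\[
y_{i,j}=t^{a_{i,j}}\quad(j=1,\dots,N_i),\qquad y_{i,j}=0\quad(j>N_i).
\]
For any $d\ge 1$ this gives $p_d(\y_i)=\sum_{j=1}^{N_i}t^{d\,a_{i,j}}=V_{\alphahat_i}(t^d)$, hence $p_{\lambda^i}(\y_i)=\prod_{d\ge 1}p_d(\y_i)^{m_d^i}=\prod_{d\ge 1}V_{\alphahat_i}(t^d)^{m_d^i}$, and therefore
\[
p_{\lambda^1}(\y_1)\cdots p_{\lambda^s}(\y_s)\bigr|_{\mathrm{spec}}=\prod_{i=1}^s\prod_{d\ge 1}V_{\alphahat_i}(t^d)^{m_d^i}=V_\omhat(t).
\]

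Combining these identifications term by term, the summand of $W_\muhat^\tomhat(\y_1,\dots,\y_s)$ indexed by $(\lambda^1,\dots,\lambda^s)$ specializes to $A_\omhat^o V_\omhat(t)c_\omhat^\muhat$, which is precisely the summand of $W_\muhat^\tomhat(t)$ indexed by the corresponding $\omhat\in\overline{\mathfrak{H}}^{-1}(\tomhat)$; summing over all $(\lambda^1,\dots,\lambda^s)$ yields the claimed identity. The only genuine obstacle is the rewriting of each $V_{\alphahat_i}(t)$ as a sum of monomials, which is where the non-negative integrality hypothesis is essential—without it one cannot encode $V_{\alphahat_i}(t)$ as $p_1$ of a specialization of a single set of variables.
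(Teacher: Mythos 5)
Your proof is correct and follows essentially the same route as the paper: the same specialization of $V_{\alphahat_i}(1)$ variables of $\y_i$ to monomials so that $p_1(\y_i)=V_{\alphahat_i}(t)$, and the same term-by-term identification $V_\omhat(t)=\prod_i p_{\lambda^i}(\y_i)$ under the bijection with $\calP_{n_1}\times\cdots\times\calP_{n_s}$. Your explicit verification that $A_\omhat^o=1/(z_{\lambda^1}\cdots z_{\lambda^s})$ and the appeal to Proposition \ref{LRb} for the trace are exactly the ingredients the paper sets up in the discussion preceding the lemma.
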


\begin{proof}Since the coefficients of $V_{\alphahat_i}(t)$ are non-negative, there is an appropriate specialization of $V_{\alphahat_i}(1)$ variables in $\y_i=\{y_{i,1},y_{i,2},\dots\}$ into monomials $t^i$, with $i\geq 0$ (the other variables being specialized to $0$) such that 

$$
p_1(\y_i)=V_{\alphahat_i}(t).
$$
If $\omhat\in\overline{\mathfrak{H}}{^{-1}}(\tomhat)$ correspond to $(\lambda^1,\dots,\lambda^s)\in\calP_{n_1}\times\cdots\times\,\calP_{n_s}$ where $\lambda^i=(\lambda^i_1,\lambda^i_2,\dots)$ then 

\begin{align*}
V_\omhat(t)&=\prod_{i=1}^s\prod_jV_{\alphahat_i}\left(t^{\lambda^i_j}\right)\\
&=\prod_{i,j}p_1(\y_i^{\lambda^i_j})\\
&=\prod_{i=1}^sp_{\lambda^i}(\y_i).
\end{align*}
\end{proof}

\begin{remark} By the discussion above Lemma \ref{W}, note that 
$$
\sum_{\omhat\in \overline{\mathfrak{H}}{^{-1}}(\tomhat)}A_\omhat^oc_\omhat^\muhat=\left\langle \C_\tomhat^\muhat,1\right\rangle_{W_\tomhat}
$$
and so if  $V_{\alphahat_i}(t)=1$ for all $i=1,\dots,s$, then $W_\muhat^\tomhat(q)=\left\langle \C_\tomhat^\muhat,1\right\rangle_{W_\tomhat}$.

\label{inner}\end{remark}

We now decompose the character of the representation $W_\tomhat\rightarrow\GL(\C_\tomhat^\muhat)$ as a sum of irreducible characters

$$
\sum_{(\tau^1,\dots,\tau^s)\in \calP_{n_1}\times\cdots\times\,\calP_{n_s}}m_{(\tau^1,\dots,\tau^s)}\,\chi^{\tau^1}\cdots \chi^{\tau^s}.
$$We thus have 

\begin{align*}
W_\muhat^\tomhat(\y_1,\dots,\y_k)&=\sum_{(\tau^1,\dots,\tau^s)\in \calP_{n_1}\times\cdots\times\,\calP_{n_s}}m_{(\tau^1,\dots,\tau^s)}\,\sum_{(\lambda^1,\dots,\lambda^s)\in\calP_{n_1}\times\cdots\times\,\calP_{n_s}}\frac{1}{z_{\lambda^1}\cdots z_{\lambda^s}}\,p_{\lambda^1}(\y_1)\cdots p_{\lambda^s}(\y_s)\,\chi^{\tau^1}_{\lambda^1}\cdots\,\chi^{\tau^s}_{\lambda^s}.\\
&=\sum_{(\tau^1,\dots,\tau^s)\in \calP_{n_1}\times\cdots\times\,\calP_{n_s}}m_{(\tau^1,\dots,\tau^s)}\,\prod_{i=1}^s\sum_{\lambda\in\calP_{n_i}}\frac{1}{z_\lambda}\,p_\lambda(\y_i)\,\chi^{\tau^i}_\lambda\\
&=\sum_{(\tau^1,\dots,\tau^s)\in \calP_{n_1}\times\cdots\times\,\calP_{n_s}}m_{(\tau^1,\dots,\tau^s)}\,s_{\tau^1}(\y_1)\cdots s_{\tau^s}(\y_s).
\end{align*}
Now a Schur function $s_\lambda$ decomposes as 
$$
s_\lambda=\sum_{\mu\unlhd\lambda} K_{\lambda\mu}m_\mu
$$
where $m_\mu$ is the monomial symmetric function associated with $\mu$ and  $\{K_{\lambda\mu}\}_{\lambda,\mu}$ are the Kostka numbers which are non-negative integers. Hence 

\beq
W_\muhat^\tomhat(\y_1,\dots,\y_k)=\sum_{(\tau^1,\dots,\tau^s)\in \calP_{n_1}\times\cdots\times\,\calP_{n_s}}f_{(\tau^1,\dots,\tau^s)}\,m_{\tau^1}(\y_1)\cdots m_{\tau^s}(\y_s),
\label{form}
\eeq
for some $f_{(\tau^1,\dots,\tau^s)}\in\N$.

Put

$$
\calR_{\tomhat,\,\muhat}:=\{(\tau^1,\dots,\tau^s)\in\calP_{n_1}\times\cdots\times\,\calP_{n_s}\,|\,m_{(\tau^1,\dots,\tau^s)}\neq 0\}.
$$
For a partition $\lambda$, denote by $\ell(\lambda)$ its length. 

\begin{theorem}Assume that for all $i=1,2,\dots,s$, the polynomial $V_{\alphahat_i}(t)$ has non-negative integer coefficients. Then the polynomial $W_\muhat^\tomhat(t)$ has non-negative integer coefficients. Moreover it is non-zero if and only if there exists a sequence $(\tau^1,\dots,\tau^s)\in\calR_{\tomhat,\,\muhat}$ such that for all $i=1,\dots,s$, we have 

$$
\ell(\tau^i)\leq V_{\alphahat_i}(1).
$$
\label{THEOW}\end{theorem}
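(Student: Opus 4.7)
The plan is to combine the two expansions of $W_\muhat^\tomhat(\y_1,\dots,\y_s)$ recorded just above the statement (one in Schur symmetric functions, one in monomial symmetric functions) with the specialization $\y_i\mapsto(\text{powers of }t)$ provided by Lemma \ref{W}. After this specialization the set of non-zero variables in each $\y_i$ has cardinality exactly $V_{\alphahat_i}(1)$, since by hypothesis $V_{\alphahat_i}(t)\in\N[t]$ and so the number of variables one specializes to positive monomials in $t$ is the sum of its coefficients, namely $V_{\alphahat_i}(1)$.

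For the non-negativity assertion I would use formula (\ref{form}): $W_\muhat^\tomhat(\y_1,\dots,\y_s)=\sum f_{(\tau^1,\dots,\tau^s)}\prod_i m_{\tau^i}(\y_i)$ with $f_{(\tau^1,\dots,\tau^s)}\in\N$. Under the specialization of Lemma \ref{W}, each $m_{\tau^i}(\y_i)$ becomes a polynomial in $t$ with non-negative integer coefficients (a monomial symmetric function evaluated at positive monomials), so $W_\muhat^\tomhat(t)$ is a non-negative integer linear combination of such polynomials, hence lies in $\N[t]$.

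For the vanishing criterion I would instead use the Schur expansion
$$
W_\muhat^\tomhat(\y_1,\dots,\y_s)=\sum_{(\tau^1,\dots,\tau^s)\in\calR_{\tomhat,\muhat}} m_{(\tau^1,\dots,\tau^s)}\,s_{\tau^1}(\y_1)\cdots s_{\tau^s}(\y_s),
$$
with $m_{(\tau^1,\dots,\tau^s)}\in\Nstar$ for all terms that appear (by definition of $\calR_{\tomhat,\muhat}$). The key classical fact I would invoke is that $s_\tau(x_1,\dots,x_N)=0$ if $\ell(\tau)>N$, while if $\ell(\tau)\leq N$ then $s_\tau(x_1,\dots,x_N)$ is a non-zero polynomial in $x_1,\dots,x_N$ with non-negative integer coefficients. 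Applied to our specialization, $s_{\tau^i}(\y_i)$ vanishes when $\ell(\tau^i)>V_{\alphahat_i}(1)$ and otherwise evaluates to a non-zero element of $\N[t]$, because substituting positive monomials $t^{e_j}$ for the surviving variables cannot produce cancellations in a polynomial with non-negative coefficients. Since the $m_{(\tau^1,\dots,\tau^s)}$ are non-negative and $W_\muhat^\tomhat(t)$ is thus a sum of non-negative polynomials in $t$, it is non-zero if and only if at least one summand is non-zero, which is exactly the existence of a tuple $(\tau^1,\dots,\tau^s)\in\calR_{\tomhat,\muhat}$ with $\ell(\tau^i)\leq V_{\alphahat_i}(1)$ for every $i$.

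The only delicate point, and the place one must be careful, is ensuring that no cancellation can occur when the specialization sends several variables to the same power $t^e$. This is handled entirely by the sign-positivity of the monomial (or Schur) expansion in the variables $\y_i$: a polynomial in $\N[x_1,\dots,x_N]$ specialized at $x_j=t^{e_j}$ with $e_j\geq 0$ remains in $\N[t]$ and is non-zero as soon as it is non-zero as a polynomial in the $x_j$. Thus both the positivity statement and the vanishing criterion reduce to standard properties of Schur and monomial symmetric functions together with Lemma \ref{W}.
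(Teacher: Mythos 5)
Your proposal is correct and follows essentially the same route as the paper: non-negativity via the monomial expansion (\ref{form}) under the specialization of Lemma \ref{W}, and the vanishing criterion via the Schur expansion together with the fact that $s_\tau$ in $N$ positive variables is non-zero exactly when $\ell(\tau)\leq N$, with positivity ruling out cancellations. The paper phrases the Schur-vanishing step through the monomial expansion and the dominance order rather than quoting it as a classical fact, but the argument is the same.
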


\begin{proof} The assertion (i) follows from the fact that if we specialize the variables $\y_i$ according to Lemma \ref{W} we see that the right hand side of Formula (\ref{form}) is a polynomial in $t$ with non-negative coefficients. We have 

$$
W_\muhat^\tomhat(t)=\sum_{(\tau^1,\dots,\tau^s)\in \calP_{n_1}\times\cdots\times\,\calP_{n_s}}m_{(\tau^1,\dots,\tau^s)}\,s_{\tau^1}(\y_1)\cdots s_{\tau^s}(\y_s)\neq 0
$$
if and only if there exists $(\tau_1,\dots,\tau_s)\in\calR_{\tomhat,\,\muhat}$ such that for all $i=1,\dots,s$ we have $s_{\tau^i}(\y_i)\neq 0$. But $s_{\tau^i}(\y_i)\neq 0$ if and only if there exists a partition $\lambda^i\unlhd \tau^i$ such that $m_{\lambda^i}(\y_i)\neq 0$, i.e., such that $\ell(\lambda^i)\leq V_{\alphahat_i}(1)$. Indeed, the integer $V_{\alpha_i}(1)$ is the number of variables in $\y_i$ that are specialized to a monomial $q^i$, the other variables being specialized to $0$. We conclude by noticing that if $\lambda^i\unlhd\tau^i$, then $\ell(\tau^i)\leq\ell(\lambda^i)$.

\end{proof}

For simplicity choose a total ordering $\geq$ on $\oP$ and denote the elements of $\otT$ in the form $\tomhat=\alphahat_1^{n_1}\alphahat_2^{n_2}\cdots\alphahat_s^{n_s}$ with $\alphahat_1\geq\alphahat_2\geq\cdots\geq \alphahat_s$  and $\tomhat(\alphahat_i)=n_i$.

\begin{theorem} Assume that the polynomials $V_\alphahat(t)$, with $|\alphahat|\leq n$, have non-negative integer coefficients. Then we have the following assertions.

\noindent (i) For any $\muhat\in\oP_n$, the polynomial $U_\muhat(t)$ has non-negative integer coefficients.

\noindent (ii) The polynomial $U_\muhat(t)$ is non-zero if and only if there exists $\tomhat=\alphahat_1^{n_1}\alphahat_2^{n_2}\cdots\alphahat_s^{n_s}\in\otT$  and a sequence $(\tau^1,\dots,\tau^s)\in\calR_{\tomhat,\,\muhat}$ such that for all $i=1,\dots,s$, we have 

$$
\ell(\tau^i)\leq V_{\alphahat_i}(1).
$$
\label{THEO1}\end{theorem}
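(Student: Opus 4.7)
The plan is to reduce Theorem~\ref{THEO1} to Theorem~\ref{THEOW} via the decomposition
$$
U_\muhat(t)=\sum_{\tomhat\in\otT}W_\muhat^\tomhat(t),
$$
which was established at the beginning of Section~\ref{tech}. Note that $c_\omhat^\muhat=0$ unless $|\omhat|=|\muhat|=n$, so only the finitely many $\tomhat\in\otT$ with $|\tomhat|=n$ contribute, and the support of any such $\tomhat$ consists of multi-partitions $\alphahat$ with $|\alphahat|\leq n$. Hence the standing hypothesis that $V_\alphahat(t)$ has non-negative integer coefficients for all $|\alphahat|\leq n$ makes Theorem~\ref{THEOW} applicable to every summand $W_\muhat^\tomhat(t)$ occurring.

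For part~(i), I would simply observe that Theorem~\ref{THEOW} gives $W_\muhat^\tomhat(t)\in\N[t]$ for each such $\tomhat$, and a sum of polynomials with non-negative integer coefficients again has non-negative integer coefficients; this yields $U_\muhat(t)\in\N[t]$.

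For part~(ii), the non-negativity just established is the decisive fact: a sum of elements of $\N[t]$ vanishes if and only if each summand vanishes. Thus $U_\muhat(t)\neq 0$ if and only if there is some $\tomhat\in\otT$ with $W_\muhat^\tomhat(t)\neq 0$, and by Theorem~\ref{THEOW} this is in turn equivalent to the existence of $\tomhat=\alphahat_1^{n_1}\cdots\alphahat_s^{n_s}$ together with $(\tau^1,\dots,\tau^s)\in\calR_{\tomhat,\,\muhat}$ satisfying $\ell(\tau^i)\leq V_{\alphahat_i}(1)$ for all $i$.

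There is essentially no obstacle here, since the real work—the expansion of $U_\muhat(t)$ as $\sum_\tomhat W_\muhat^\tomhat(t)$, the identification of $W_\muhat^\tomhat(\y_1,\dots,\y_s)$ with the trace generating series, and the non-negativity and non-vanishing analysis via Kostka numbers and the specialization of Lemma~\ref{W}—has already been carried out in Theorem~\ref{THEOW}. Theorem~\ref{THEO1} is just the assembled statement, obtained by summing over $\tomhat$ and using that a finite sum in $\N[t]$ is zero iff every term is zero.
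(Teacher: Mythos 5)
Your proposal is correct and matches the paper's own argument, which likewise reduces Theorem \ref{THEO1} to Theorem \ref{THEOW} via the decomposition $U_\muhat(t)=\sum_{\tomhat}W_\muhat^\tomhat(t)$ and the observation that a sum of polynomials with non-negative coefficients vanishes only if every summand does. The paper states this in one line; you have simply spelled out the same reasoning in more detail.
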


\begin{proof} Follows from Theorem \ref{THEOW} and the fact that the polynomial  $U_\muhat(t)$ is non-zero if and only if  there exists an $\tomhat\in\otT$ such that $W_\muhat ^\tomhat(t)\neq 0$. \end{proof}

\subsection{Cauchy function}\label{cauchy}

Given a partition $\lambda\in\calP_n$ and an integer $g\geq 0$, we
define 
\begin{equation}
\calH_\lambda(t):=\frac{t^{g\,\langle\lambda,\lambda\rangle}}{a_\lambda(t)}.
\label{hookpure}\end{equation}
where $a_\lambda(q)$ is the cardinality of the centralizer in $\GL_{|\lambda|}(\F_q)$ of a unipotent element
with Jordan form of type $\lambda$. 

For a partition $\lambda$, let $\tH_\lambda(\x;t)=\sum_\nu\tilde{K}_{\nu\lambda}(t)s_\nu(\x)\in\Lambda(\x)\otimes_\Z\Q(t)$ where $\tilde{K}_{\nu\lambda}(t)$ are the modified Kostka polynomials \cite[Chapter III, \S 7]{macdonald}.

As in \cite{hausel-letellier-villegas} we consider the function $\Omega(t)\in\Lambda[[T]]$ defined as

$$
\Omega(t)=\Omega(\x_1,\dots,\x_k;t):=\sum_{\lambda\in \calP} \calH_{\lambda}(t)
\prod_{i=1}^k\tilde{H_\lambda}(\x_i;t).
$$
When $g=0$ and $k=2$ this function was first considered by Garsia and Haiman \cite{garsia-haiman}.  Define $\bbV(t)\in T\Lambda[[T]]$ and $\bbU(t)\in 1+T\Lambda[[T]]$ by

$$
\bbV(t):=(t-1)\,\Log\,\Omega(t),\hspace{.5cm} \bbU(t):=\Exp\,\bbV(t)
$$

\subsection{Harcos inequality}\label{harcossec}

We extend the definition of size of partitions to any sequence $\x=(x_1,\dots,x_k)$ of non-negative integers as 

$$
|\x|:=\sum_i x_i.
$$
For two sequences $\c=(c_1,\dots,c_r)$, $\x=(x_1,\dots,x_s)$, with $c:={\rm max}_i\, c_i$ define 

$$
\sigma_\c(\x):=c|\x|^2-|\c|\sum_ix_i^2.
$$

Let us now state Harcos theorem \cite[Appendix]{hausel-letellier-villegas2}.

\begin{theorem} Let $r,s>0$ be integers. For $i=1,\dots,s$, let $\x^i=(x_1^i,\dots,x_r^i)$ be a sequence with non-negative numbers. Put $\c=(c_1,\dots,c_r):=\sum_{i=1}^s\x^i$  and $c:={\rm max}_i\, c_i$. Then 
$$
\sigma_\c(\c)\geq \sum_{i=1}^s \sigma_\c(\x^i).
$$\label{harcos}
\end{theorem}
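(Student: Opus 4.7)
The plan is to argue by induction on the length $r$. The base case $r=1$ is immediate: then $|\c|=c$ and $\sum_k y_k^2=|\y|^2$, so $\sigma_\c$ is identically zero and both sides vanish.

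For the inductive step, I would relabel so that $c_1=c:=\max_k c_k$ and write $\c=(c,\c')$ with $\c'=(c_2,\dots,c_r)$, and $\x^i=(u^i,\v^i)$ correspondingly. The degenerate case $|\c|=c$ forces $\c'=0$ and every $\v^i=0$, and then both sides of the inequality are zero; so I would assume $|\c|>c$. The heart of the argument is the algebraic identity
\[
\sigma_\c(\c)-\sum_i\sigma_\c(\x^i)=(|\c|-c)\sum_i\left(u^i-\frac{c|\v^i|}{|\c|-c}\right)^2+\frac{|\c|}{|\c|-c}\left[\tau(\c')-\sum_i\tau(\v^i)\right],
\]
where $\tau(\y):=c|\y|^2-|\c'|\sum_k y_k^2$. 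It is obtained by completing the square in each $u^i$ in the expansion of the left-hand side and verified by direct calculation. The first term on the right is a sum of squares, hence $\geq 0$.

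For the second term, setting $c_\star:=\max_{k\geq 2}c_k$ one has $\tau(\y)=\sigma_{\c'}(\y)+(c-c_\star)|\y|^2$ (where $\sigma_{\c'}$ is the quadratic from the theorem associated to the vector $\c'$), and hence
\[
\tau(\c')-\sum_i\tau(\v^i)=\bigl[\sigma_{\c'}(\c')-\sum_i\sigma_{\c'}(\v^i)\bigr]+(c-c_\star)\bigl[|\c'|^2-\sum_i|\v^i|^2\bigr].
\]
The first bracket is $\geq 0$ by the inductive hypothesis applied to $\c'$, and the second is $\geq 0$ because $c\geq c_\star$ and $|\c'|^2=\bigl(\sum_i|\v^i|\bigr)^2\geq\sum_i|\v^i|^2$. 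Combining everything gives $\sigma_\c(\c)\geq\sum_i\sigma_\c(\x^i)$, closing the induction.

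The main obstacle will be finding the correct decomposition identity. The naive pairwise strengthening $c|\x^i||\x^{i'}|\geq|\c|\langle\x^i,\x^{i'}\rangle$ is in fact false in general (for instance $\c=(2,1)$ with $(\x^1,\x^2,\x^3)=\bigl((1,0),(1,0),(0,1)\bigr)$), so the proof has to make genuine use of all the vectors simultaneously. The completing-the-square identity above does precisely this: negative pairwise contributions get absorbed into the sum-of-squares term, and what remains is an instance of the same inequality for the shortened vector $\c'$, up to the change of scalar $c\to c_\star$ which costs only a non-negative correction.
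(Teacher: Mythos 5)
Your argument is correct, and I verified the key identity: writing $N=|\c|$, $a_i=u^i$, $b_i=|\v^i|$, $q_i=\sum_k(v^i_k)^2$, both sides of your decomposition equal $cN^2-Nc^2-NQ+(N-c)\sum_ia_i^2-c\sum_i(2a_ib_i+b_i^2)+N\sum_iq_i$ with $Q=\sum_{k\ge2}c_k^2$, so the completion of the square is exact; the sign analysis of the two resulting terms and the application of the inductive hypothesis to $\c'=\sum_i\v^i$ are all valid, and the degenerate cases $r=1$ and $|\c|=c$ are handled properly. Note, however, that the paper does not prove this statement at all: it is quoted verbatim from the appendix (due to Harcos) of the cited reference on arithmetic harmonic analysis on character and quiver varieties II, so there is no internal proof to compare against. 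Your induction on the length $r$ of the vectors, peeling off the coordinate where the maximum of $\c$ is attained and absorbing the cross terms into a sum of squares, is a legitimate self-contained alternative; your side remark that the pairwise strengthening $c|\x^i||\x^{i'}|\geq|\c|\langle\x^i,\x^{i'}\rangle$ fails (e.g.\ for $\c=(2,1)$, $\x^1=\x^2=(1,0)$, $\x^3=(0,1)$) correctly identifies why a genuinely global argument is needed.
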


\begin{corollary}Let $\alpha^1,\dots,\alpha^s$ be $s$ partitions and let $\mu$ be a partition of size $|\sum_i\alpha^i|$  such that $\mu\unlhd\sum_i\alpha^i$. Then 

$$
\sigma_\mu(\mu)\geq \sum_{i=1}^s\sigma_\mu(\alpha^i).
$$
\end{corollary}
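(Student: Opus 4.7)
The plan is to reduce the corollary to Theorem \ref{harcos} by constructing auxiliary non-negative sequences $\x^1,\dots,\x^s$ with $\sum_i\x^i=\mu$ and $\sum_j(x^i_j)^2\leq\sum_j(\alpha^i_j)^2$ for each $i$. Setting $\lambda:=\sum_i\alpha^i$, the hypothesis becomes $\mu\unlhd\lambda$ with $|\mu|=|\lambda|$. By the Hardy--Littlewood--P\'olya theorem (equivalently, by Birkhoff's theorem on doubly stochastic matrices), this form of majorization means there exists a doubly stochastic matrix $D=(d_{jk})$ of size $r\times r$, with $r\geq\max(\ell(\lambda),\ell(\mu))$, such that $\mu_j=\sum_k d_{jk}\lambda_k$.

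I would then define $x^i_j:=\sum_k d_{jk}\alpha^i_k$ (with $\alpha^i$ padded by zeros to length $r$) and verify three properties. First, $\sum_i x^i_j=\sum_k d_{jk}\lambda_k=\mu_j$, so $\sum_i\x^i=\mu$ as a sequence. Second, $\sum_j x^i_j=\sum_k \alpha^i_k\sum_j d_{jk}=|\alpha^i|$ by column-stochasticity. Third, since $(d_{jk})_k$ is a probability distribution for each fixed $j$, Jensen's inequality gives $(x^i_j)^2=\bigl(\sum_k d_{jk}\alpha^i_k\bigr)^2\leq\sum_k d_{jk}(\alpha^i_k)^2$; summing over $j$ and invoking column-stochasticity $\sum_j d_{jk}=1$ yields $\sum_j(x^i_j)^2\leq\sum_k(\alpha^i_k)^2$.

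Finally I would apply Theorem \ref{harcos} to the sequences $\x^1,\dots,\x^s$. Since $\sum_i\x^i=\mu$ and $\mu$ is a partition, $\max_j(\sum_i x^i_j)=\mu_1$, so the function $\sigma_{\sum_i\x^i}$ appearing in Harcos's theorem coincides with $\sigma_\mu$ from the corollary. The theorem thus yields $\sigma_\mu(\mu)\geq\sum_i\sigma_\mu(\x^i)$. Using the explicit formula $\sigma_\mu(\x)=\mu_1|\x|^2-|\mu|\sum_j x_j^2$ together with $|\x^i|=|\alpha^i|$ and the sum-of-squares bound $\sum_j(x^i_j)^2\leq\sum_j(\alpha^i_j)^2$, we obtain $\sigma_\mu(\x^i)\geq\sigma_\mu(\alpha^i)$ term by term, and chaining these inequalities gives the corollary. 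The argument is short once HLP is invoked; the only substantive input beyond Harcos is recognizing that the doubly-stochastic representation of majorization transfers cleanly from $\lambda=\sum_i\alpha^i$ to a sum-of-squares-non-increasing redistribution of each $\alpha^i$, so I do not anticipate any significant obstacle.
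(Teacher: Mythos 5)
Your proof is correct and follows essentially the same route as the paper: reduce to Theorem \ref{harcos} by producing non-negative sequences $\x^1,\dots,\x^s$ summing to $\mu$ with $|\x^i|=|\alpha^i|$ and $\sum_j(x^i_j)^2\leq\sum_j(\alpha^i_j)^2$. The only difference is that the paper merely asserts the existence of such $\x^i$ from $\mu\unlhd\sum_i\alpha^i$, whereas you justify it explicitly via the Hardy--Littlewood--P\'olya/Birkhoff doubly stochastic representation and Jensen's inequality, which is a welcome filling-in of that step.
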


\begin{proof} We have 

$$
\sigma_\mu(\mu)-\sum_{i=1}^s\sigma_\mu(\alpha^i)=\sigma_\mu(\mu)-\sum_{i=1}^s\left(\mu_1|\alpha^i|^2-|\mu|\sum_j(\alpha^i_j)^2\right).$$
Since $\mu\unlhd\sum_i\alpha^i$, we can find $\x^1,\dots,\x^k$ such that 

1) $\mu=\sum_i\x^i$,

2) for all $i=1,\dots,s$,  $|\x^i|=|\alpha^i|$ and $\sum_j(x^i_j)^2\leq\sum_j(\alpha^i_j)^2$.

Notices that the sequences $\x^i$ may not be partitions any more.

Hence 

$$\sigma_\mu(\mu)-\sum_{i=1}^s\sigma_\mu(\alpha^i)\geq \sigma_\mu(\mu)-\sum_{i=1}^s\sigma_\mu(\x^i)$$
which is non-negative by Theorem \ref{harcos}.

\end{proof}

\section{Tensor products of unipotent characters}

On $\Lambda$ we put $\langle\,,\,\rangle:=\prod_i\langle\,,\,\rangle_i$ where $\langle\,,\,\rangle_i$ denotes the Hall pairing on $\Lambda(\x_i)$ which makes  the basis $\{s_\mu(\x_i)\}$ of Schur symmetric functions  orthonormal.

\subsection{Irreducible characters of unipotent type}

To alleviate the notation, put $G:=\GL_n(\F_q)$. Let $B\subset G$ be the upper triangular matrices and let $\C[G/B]$ be the $\C$-vector space with basis $G/B=\{gB\,|\,g\in G\}$. The group $G$ acts on $\C[G/B]$ by left multiplication. Let us denote by ${\rm Ind}_B^G(1):G\rightarrow\C, g\mapsto {\rm Trace}\,\left(g\,|\,\C[G/B]\right)$ the character of the representation $G\rightarrow\GL\left(\C[G/B]\right)$. The decomposition of ${\rm Ind}_B^G(1)$ as a sum of irreducible charaters of $G$ reads

$$
{\rm Ind}_B^G(1)=\sum_{\chi\in {\rm Irr}\,\calS_n}\chi(1)\cdot \calU_\chi.
$$
The irreducible characters $\{\calU_\chi\}_\chi$ are called the \emph{unipotent} characters of $G$. The character $\calU_1$ is the trivial character of $G$ and $\calU_\epsilon$, where $\epsilon$ is the sign character of $\calS_n$, is the Steinberg character of $G$. For a partition $\lambda$ of $n$, we put 

$$
\calU_\lambda:=\calU_{\chi^\lambda}
$$
so that the $\calU_{(1^n)}$ is the Steinberg character and $\calU_{(n^1)}$ is the trivial character.

We say that an irreducible character of $G$ is of \emph{unipotent type} if it is of the form 
$(\alpha\circ{\rm det})¥\cdot\calU_\lambda$ for some partition $\lambda$ and some linear character $\alpha:\F_q^\times\rightarrow\C^\times$.

\subsection{Comet-shaped quivers}\label{quiver}

Given a non-negative integer $g$ and a
$k$-tuple $\muhat=(\mu^1,\mu^2,\dots,\mu^k)\in\oP_n$ with $n\geq 1$, we denote by $\Gamma_\muhat$ 
 the \emph{comet-shaped} quiver

\begin{center}
\unitlength 0.1in
\begin{picture}( 52.1000, 15.4500)(  4.0000,-17.0000)
%
\special{pn 8}%
\special{ar 1376 1010 70 70  0.0000000 6.2831853}%
%
\special{pn 8}%
\special{ar 1946 410 70 70  0.0000000 6.2831853}%
%
\special{pn 8}%
\special{ar 2946 410 70 70  0.0000000 6.2831853}%
%
\special{pn 8}%
\special{ar 5540 410 70 70  0.0000000 6.2831853}%
%
\special{pn 8}%
\special{ar 1946 810 70 70  0.0000000 6.2831853}%
%
\special{pn 8}%
\special{ar 2946 810 70 70  0.0000000 6.2831853}%
%
\special{pn 8}%
\special{ar 5540 810 70 70  0.0000000 6.2831853}%
%
\special{pn 8}%
\special{ar 1946 1610 70 70  0.0000000 6.2831853}%
%
\special{pn 8}%
\special{ar 2946 1610 70 70  0.0000000 6.2831853}%
%
\special{pn 8}%
\special{ar 5540 1610 70 70  0.0000000 6.2831853}%
%
\special{pn 8}%
\special{pa 1890 1560}%
\special{pa 1440 1050}%
\special{fp}%
\special{sh 1}%
\special{pa 1440 1050}%
\special{pa 1470 1114}%
\special{pa 1476 1090}%
\special{pa 1500 1088}%
\special{pa 1440 1050}%
\special{fp}%
%
\special{pn 8}%
\special{pa 2870 410}%
\special{pa 2020 410}%
\special{fp}%
\special{sh 1}%
\special{pa 2020 410}%
\special{pa 2088 430}%
\special{pa 2074 410}%
\special{pa 2088 390}%
\special{pa 2020 410}%
\special{fp}%
%
\special{pn 8}%
\special{pa 3720 410}%
\special{pa 3010 410}%
\special{fp}%
\special{sh 1}%
\special{pa 3010 410}%
\special{pa 3078 430}%
\special{pa 3064 410}%
\special{pa 3078 390}%
\special{pa 3010 410}%
\special{fp}%
\special{pa 3730 410}%
\special{pa 3010 410}%
\special{fp}%
\special{sh 1}%
\special{pa 3010 410}%
\special{pa 3078 430}%
\special{pa 3064 410}%
\special{pa 3078 390}%
\special{pa 3010 410}%
\special{fp}%
%
\special{pn 8}%
\special{pa 2870 810}%
\special{pa 2020 810}%
\special{fp}%
\special{sh 1}%
\special{pa 2020 810}%
\special{pa 2088 830}%
\special{pa 2074 810}%
\special{pa 2088 790}%
\special{pa 2020 810}%
\special{fp}%
%
\special{pn 8}%
\special{pa 2870 1610}%
\special{pa 2020 1610}%
\special{fp}%
\special{sh 1}%
\special{pa 2020 1610}%
\special{pa 2088 1630}%
\special{pa 2074 1610}%
\special{pa 2088 1590}%
\special{pa 2020 1610}%
\special{fp}%
%
\special{pn 8}%
\special{pa 3730 810}%
\special{pa 3020 810}%
\special{fp}%
\special{sh 1}%
\special{pa 3020 810}%
\special{pa 3088 830}%
\special{pa 3074 810}%
\special{pa 3088 790}%
\special{pa 3020 810}%
\special{fp}%
\special{pa 3740 810}%
\special{pa 3020 810}%
\special{fp}%
\special{sh 1}%
\special{pa 3020 810}%
\special{pa 3088 830}%
\special{pa 3074 810}%
\special{pa 3088 790}%
\special{pa 3020 810}%
\special{fp}%
%
\special{pn 8}%
\special{pa 3730 1610}%
\special{pa 3020 1610}%
\special{fp}%
\special{sh 1}%
\special{pa 3020 1610}%
\special{pa 3088 1630}%
\special{pa 3074 1610}%
\special{pa 3088 1590}%
\special{pa 3020 1610}%
\special{fp}%
\special{pa 3740 1610}%
\special{pa 3020 1610}%
\special{fp}%
\special{sh 1}%
\special{pa 3020 1610}%
\special{pa 3088 1630}%
\special{pa 3074 1610}%
\special{pa 3088 1590}%
\special{pa 3020 1610}%
\special{fp}%
%
\special{pn 8}%
\special{pa 5466 410}%
\special{pa 4746 410}%
\special{fp}%
\special{sh 1}%
\special{pa 4746 410}%
\special{pa 4812 430}%
\special{pa 4798 410}%
\special{pa 4812 390}%
\special{pa 4746 410}%
\special{fp}%
%
\special{pn 8}%
\special{pa 5466 810}%
\special{pa 4746 810}%
\special{fp}%
\special{sh 1}%
\special{pa 4746 810}%
\special{pa 4812 830}%
\special{pa 4798 810}%
\special{pa 4812 790}%
\special{pa 4746 810}%
\special{fp}%
%
\special{pn 8}%
\special{pa 5466 1610}%
\special{pa 4746 1610}%
\special{fp}%
\special{sh 1}%
\special{pa 4746 1610}%
\special{pa 4812 1630}%
\special{pa 4798 1610}%
\special{pa 4812 1590}%
\special{pa 4746 1610}%
\special{fp}%
%
\special{pn 8}%
\special{pa 1880 840}%
\special{pa 1450 990}%
\special{fp}%
\special{sh 1}%
\special{pa 1450 990}%
\special{pa 1520 988}%
\special{pa 1500 972}%
\special{pa 1506 950}%
\special{pa 1450 990}%
\special{fp}%
%
\special{pn 8}%
\special{pa 1900 460}%
\special{pa 1430 960}%
\special{fp}%
\special{sh 1}%
\special{pa 1430 960}%
\special{pa 1490 926}%
\special{pa 1468 922}%
\special{pa 1462 898}%
\special{pa 1430 960}%
\special{fp}%
%
\special{pn 8}%
\special{sh 1}%
\special{ar 1946 1010 10 10 0  6.28318530717959E+0000}%
\special{sh 1}%
\special{ar 1946 1210 10 10 0  6.28318530717959E+0000}%
\special{sh 1}%
\special{ar 1946 1410 10 10 0  6.28318530717959E+0000}%
\special{sh 1}%
\special{ar 1946 1410 10 10 0  6.28318530717959E+0000}%
%
\special{pn 8}%
\special{sh 1}%
\special{ar 4056 410 10 10 0  6.28318530717959E+0000}%
\special{sh 1}%
\special{ar 4266 410 10 10 0  6.28318530717959E+0000}%
\special{sh 1}%
\special{ar 4456 410 10 10 0  6.28318530717959E+0000}%
\special{sh 1}%
\special{ar 4456 410 10 10 0  6.28318530717959E+0000}%
%
\special{pn 8}%
\special{sh 1}%
\special{ar 4056 810 10 10 0  6.28318530717959E+0000}%
\special{sh 1}%
\special{ar 4266 810 10 10 0  6.28318530717959E+0000}%
\special{sh 1}%
\special{ar 4456 810 10 10 0  6.28318530717959E+0000}%
\special{sh 1}%
\special{ar 4456 810 10 10 0  6.28318530717959E+0000}%
%
\special{pn 8}%
\special{sh 1}%
\special{ar 4056 1610 10 10 0  6.28318530717959E+0000}%
\special{sh 1}%
\special{ar 4266 1610 10 10 0  6.28318530717959E+0000}%
\special{sh 1}%
\special{ar 4456 1610 10 10 0  6.28318530717959E+0000}%
\special{sh 1}%
\special{ar 4456 1610 10 10 0  6.28318530717959E+0000}%
\put(19.7000,-2.4500){\makebox(0,0){$[1,1]$}}%
\put(29.7000,-2.4000){\makebox(0,0){$[1,2]$}}%
\put(55.7000,-2.5000){\makebox(0,0){$[1,s_1]$}}%
\put(19.7000,-6.5500){\makebox(0,0){$[2,1]$}}%
\put(29.7000,-6.4500){\makebox(0,0){$[2,2]$}}%
\put(55.7000,-6.5500){\makebox(0,0){$[2,s_2]$}}%
\put(19.7000,-17.8500){\makebox(0,0){$[k,1]$}}%
\put(29.7000,-17.8500){\makebox(0,0){$[k,2]$}}%
\put(55.7000,-17.8500){\makebox(0,0){$[k,s_k]$}}%
\put(14.3000,-7.6000){\makebox(0,0){$0$}}%
\special{pn 8}%
\special{sh 1}%
\special{ar 2950 1010 10 10 0  6.28318530717959E+0000}%
\special{sh 1}%
\special{ar 2950 1210 10 10 0  6.28318530717959E+0000}%
\special{sh 1}%
\special{ar 2950 1410 10 10 0  6.28318530717959E+0000}%
\special{sh 1}%
\special{ar 2950 1410 10 10 0  6.28318530717959E+0000}%
\special{pn 8}%
\special{ar 1110 1000 290 220  0.4187469 5.9693013}%
\special{pn 8}%
\special{pa 1368 1102}%
\special{pa 1376 1090}%
\special{fp}%
\special{sh 1}%
\special{pa 1376 1090}%
\special{pa 1324 1138}%
\special{pa 1348 1136}%
\special{pa 1360 1158}%
\special{pa 1376 1090}%
\special{fp}%
\special{pn 8}%
\special{ar 910 1000 510 340  0.2464396 6.0978374}%
\special{pn 8}%
\special{pa 1400 1096}%
\special{pa 1406 1084}%
\special{fp}%
\special{sh 1}%
\special{pa 1406 1084}%
\special{pa 1362 1138}%
\special{pa 1384 1132}%
\special{pa 1398 1152}%
\special{pa 1406 1084}%
\special{fp}%
\special{pn 8}%
\special{sh 1}%
\special{ar 540 1000 10 10 0  6.28318530717959E+0000}%
\special{sh 1}%
\special{ar 620 1000 10 10 0  6.28318530717959E+0000}%
\special{sh 1}%
\special{ar 700 1000 10 10 0  6.28318530717959E+0000}%
\special{pn 8}%
\special{ar 1200 1000 170 100  0.7298997 5.6860086}%
\special{pn 8}%
\special{pa 1314 1076}%
\special{pa 1328 1068}%
\special{fp}%
\special{sh 1}%
\special{pa 1328 1068}%
\special{pa 1260 1084}%
\special{pa 1282 1094}%
\special{pa 1280 1118}%
\special{pa 1328 1068}%
\special{fp}%
\end{picture}%
\end{center}

\vspace{10pt}
\noindent with $k$ legs of
length $s_1,s_2,\dots,s_k$ (where $s_i=\ell(\mu^i)-1$) and with $g$ loops
at the central vertex. The
multi-partition $\muhat$ defines also a dimension vector $\v_\muhat$
of $\Gamma_\muhat$ whose coordinates on the $i$-th leg are
$(n,n-\mu^i_1,n-\mu^i_1-\mu^i_2,\dots,n-\sum_{r=1}^{s_i}\mu^i_r)$.

Let $I_\muhat=\{0\}\cup\{[i,j]\,|\,1\geq i\geq k,\, 1\geq j\geq s_i\}$ be the set of vertices of $\Gamma_\muhat$ and let ${\bf C}_\muhat=(c_{ij})_{i,j}$ be the Cartan matrix of $\Gamma_\muhat$, namely
$$
c_{ij}=\begin{cases} 2-2(\text{the number of edges joining }i\text{ to itself})\hspace{.2cm}\text{if }i=j\\ 
  - (\text{the number of edges joining }i\text{ to }j)\hspace{1.2cm}\text{ otherwise}.
         \end{cases}
$$
Let $(\,,\,)$ be the symmetric bilinear form on $\Z^{I_\muhat}$  defined by 

$$
(\e_i,\e_j)=c_{ij}.
$$
where for $i\in I_\muhat$, we denote by $\e_i$ the root of $\Gamma_\muhat$ with all zero coordinates except for a $1$ at the indicated vertex $i$. If there is no-edge loop at the vertex $i$, we say that $\e_i$ is a \emph{fundamental root} \cite[Chapter 1]{Kac1}. For a fundamental root $\e_i$ we define the associated fundamental reflection $s_i:\Z^{I_\muhat}\rightarrow\Z^{I_\muhat}$ by 

$$
s_i(\lambda)=\lambda-2(\lambda,\e_i)\,\e_i
$$
for all $\lambda\in\Z^I$. The group $W(\Gamma_\muhat)$ generated by all fundamental reflections is called the Weyl group of $\Gamma_\muhat$. A vector $\v\in\Z^{I_\muhat}$ is called a \emph{real root} of $\Gamma_\muhat$ if it is of the form $w(\e_i)$ for some fundamental root $\e_i$ and some $w\in W(\Gamma_\muhat)$. 
Recall \cite[Chapter 1]{Kac1} that  the fundamental set $M(\Gamma_\muhat)$ of imaginary roots is the set of vectors $\v\in(\N)^{I_\muhat}-\{0\}$ with connected support such that for all fundamental root $\e$, we have

$$
(\e,\v)\leq 0.
$$
The \emph{imaginary roots} are the vectors $\v\in\Z^{I_\muhat}$ which are of the form $w(\delta)$ or $w(-\delta)$ for some $\delta\in M(\Gamma_\muhat)$ and $w\in W(\Gamma_\muhat)$.

Let $\Phi(\Gamma_\muhat)\subset \Z^I$ be the set of all roots (real and imaginary) of $\Gamma_\muhat$ and
let $\Phi(\Gamma_\muhat)^+\subset \left(\N\right)^{I_\muhat}$ be the subset of positive
roots.

For $\alphahat=(\alpha^1,\dots,\alpha^k)\in\oP$, define

\begin{equation}
\delta(\alphahat):=(2g-2+k)n-\sum_{i=1}^k\alpha^i_1.
\end{equation}

\begin{proposition} A dimension vector $\v$ of $\Gamma_\muhat$ is in $M(\Gamma_\muhat)$ if and only if there exists $\alphahat\in\oP$ such that $\v=\v_\alphahat$ and $\delta(\alphahat)\geq 0$. \label{deltapos}\end{proposition}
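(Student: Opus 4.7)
The plan is to match each fundamental-root inequality in the definition of $M(\Gamma_\muhat)$ with an explicit inequality on the partition data of $\alphahat$, in both directions.

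For the ``if'' direction, given $\alphahat\in\oP_n$ with $\delta(\alphahat)\geq 0$ (and $\ell(\alpha^i)\leq\ell(\mu^i)=s_i+1$ for each $i$, so that $\v_\alphahat$ is a genuine dimension vector of $\Gamma_\muhat$), I would substitute $v_0=n$ and $v_{[i,j]}=\sum_{r>j}\alpha^i_r$ directly into $(\e,\v_\alphahat)$ for each fundamental root $\e$. The interior pairings should collapse to $\alpha^i_{j+1}-\alpha^i_j\leq 0$ and the leg-endpoint pairing to $\alpha^i_{s_i+1}-\alpha^i_{s_i}\leq 0$, both being partition inequalities (with the convention $\alpha^i_{s_i+1}=0$ when $\ell(\alpha^i)\leq s_i$). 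When $g=0$ the central vertex carries no loop, so $\e_0$ is fundamental, and a short computation gives $(\e_0,\v_\alphahat)=(2-k)n+\sum_i\alpha^i_1=-\delta(\alphahat)\leq 0$. When $g\geq 1$, $\e_0$ is not fundamental, and the bound $\delta(\alphahat)\geq(2g-2)n\geq 0$ is automatic from $\alpha^i_1\leq n$, so no further check is needed at $0$. Connectedness of the support is clear because $v_{[i,j]}$ is non-increasing in $j$ along each leg, starting from $v_0=n>0$.

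For the ``only if'' direction, fix $\v\in M(\Gamma_\muhat)$ and a leg $i$, and set $a_j:=v_{[i,j]}$ for $0\leq j\leq s_i$ with $a_0=v_0=:n$. The inequalities $(\e_{[i,j]},\v)\leq 0$ at interior vertices read $2a_j\leq a_{j-1}+a_{j+1}$, so the differences $d_j:=a_j-a_{j-1}$ are non-decreasing in $j$. My key move is to append a formal value $a_{s_i+1}:=0$: the endpoint inequality $2a_{s_i}\leq a_{s_i-1}$ then becomes the ordinary convexity relation $d_{s_i}\leq d_{s_i+1}$ with $d_{s_i+1}=-a_{s_i}$. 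Since $a_{s_i}\geq 0$ gives $d_{s_i+1}\leq 0$ while the $d_j$ are non-decreasing, I conclude $d_j\leq 0$ for all $j$, so $a_j$ is non-increasing along the leg. Setting $\alpha^i_j:=-d_j$ for $j=1,\ldots,s_i+1$ then produces a partition of $n$ of length at most $\ell(\mu^i)$ with $\v=\v_\alphahat$ by construction, yielding $\alphahat\in\oP_n$. Finally $\delta(\alphahat)\geq 0$ follows from $(\e_0,\v)\leq 0$ when $g=0$, and is automatic when $g\geq 1$ as noted above.

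The main obstacle is the forward extraction of an actual partition from $\v$: the raw endpoint condition $d_{s_i}\leq -a_{s_i}$ is not, in isolation, enough to force any $d_j$ to be non-positive. The virtual-zero extension is the crucial manoeuvre, because it exploits the non-negativity $a_{s_i}\geq 0$ (already guaranteed by $\v$ being a dimension vector) to supply a non-positive final difference $d_{s_i+1}$, and non-decreasingness then propagates that sign backward to every earlier $d_j$, delivering monotonicity along the leg for free.
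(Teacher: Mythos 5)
Your proof is correct and follows essentially the same route as the paper: both identify the parts $\alpha^i_j$ with the consecutive differences $v_{[i,j-1]}-v_{[i,j]}$ along each leg (with an implicit zero beyond the last vertex), translate the fundamental-root inequalities into the partition conditions, and read off $\delta(\alphahat)\geq 0$ from the pairing at the central vertex when $g=0$. Your treatment of the endpoint via the virtual value $a_{s_i+1}=0$ just makes explicit a convention the paper leaves implicit.
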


\begin{proof} For all $i,j\geq 1$, we have 

$$
(\v,\e_{[i,j]})=-\left((v_{[i,j-1]}-v_{[i,j]})-(v_{[i,j]}-v_{[i,j+1]})\right)
$$
where for convenience $[i,0]$ denotes also the central vertex $0$. We also have 

\begin{align*}
(\v,\e_0)&=(2-2g)v_0-\sum_{i=1}^kv_{[i,1]}\\
&=-\left((2g-2+k)v_0-\sum_{i=1}^k(v_0-v_{[i,1]})\right)
\end{align*}
For all $i=1,\dots,k$, put $\alpha^i_1:=v_0-v_{[i,1]}\in\Z$ and $\alpha^i_j:=v_{[i,j-1]}-v_{[i,j]}$ for all $j\geq 1$, and put $\alpha^i=(\alpha^i_1,\alpha^i_2,\dots)$.

Then $\v$ is in the fundamental domain if and only if for all $i=1,\dots,k$, the tuple $\alpha^i$ is a partition and $\delta(\alphahat)\geq 0$ where $\alphahat=(\alpha^1,\dots,\alpha^k)$. It is also clear that $\v=\v_\alphahat$.

\end{proof}

Note that if $g\geq 1$, then $\delta(\muhat)\geq 0$ and so $\v_\muhat$ is always an imaginary root.

Put 

\begin{equation}
A_\muhat(t):=\left\langle \bbV(t),h_\muhat\right\rangle
\label{h}\end{equation}
where $\bbV(t)$ is as in \S \ref{cauchy} and $h_\muhat=h_{\mu^1}(\x_1)\cdots h_{\mu^k}(\x_k)$ denotes the complete symmetric function.

For $\muhat\in\oP$, put

\begin{equation}
d_\muhat:=n^2(2g-2+k)-\sum_{i,j}(\mu^i_j)^2+2=2-{^t}\v_\muhat{\bf C}_\muhat\v_\muhat.
\label{dmu}\end{equation}

Recall one of the main result of  \cite{hausel-letellier-villegas2}.

\begin{theorem} (i) For any finite field $\F_q$, the evaluation $A_\muhat(q)$ counts the number of isomorphism classes of absolutely indecomposable representations of $\Gamma_\muhat$ of dimension $\v_\muhat$ over $\F_q$.

\noindent (ii) If non-zero, $A_\muhat(t)$ is a monic polynomial of degree $d_\muhat/2$ with integer coefficients.

\noindent (iii) The polynomial $A_\muhat(t)$ is non-zero if and only if $\v_\muhat\in\Phi(\Gamma_\muhat)$. Moreover $A_\muhat(t)=1$ if and only if $\v_\muhat$ is a real root.

\label{kactheo}\end{theorem}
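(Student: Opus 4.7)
The plan is to identify the polynomial $A_\muhat(t)$ with the \emph{Kac polynomial} of the comet-shaped quiver $\Gamma_\muhat$ at dimension vector $\v_\muhat$, and then invoke Kac's classical structural results on quiver representations.

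The first and most substantial step is to apply Hua's formula, which expresses the generating function of the numbers of $\F_q$-representations of a quiver --- organised by dimension vector and divided by the order of the change-of-basis group $\prod_i\GL_{v_i}(\F_q)$ --- in terms of Hall--Littlewood-type symmetric functions. Applied to the comet-shaped graph $\Gamma_\muhat$ with $g$ loops at the central vertex and $k$ legs, Hua's formula naturally reproduces $\Omega(t)$: the factor $\calH_\lambda(t)=t^{g\langle\lambda,\lambda\rangle}/a_\lambda(t)$ encodes the contribution of the $g$ loops at the central vertex together with the reciprocal cardinality of the centraliser of a unipotent element of type $\lambda$, while each $\tH_\lambda(\x_i;t)$ encodes, via the modified Kostka polynomials $\tilde K_{\nu\lambda}(t)$, the enumeration of flags of partial dimensions given by $\mu^i$ compatible with a Jordan type $\lambda$ at the central vertex. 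Pairing $\bbV(t)=(t-1)\,\Log\,\Omega(t)$ against $h_\muhat$ then selects the comet-shaped dimension vector $\v_\muhat$, by the triangularity of the transition from Schur to complete-homogeneous symmetric functions along each leg.

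The role of the operator $(t-1)\,\Log$ is the standard passage from all representations to \emph{absolutely indecomposable} representations: $\Log$ converts the generating function for all representations into that of the indecomposables, and multiplication by $(t-1)$ accounts for the scalar automorphisms $\F_q^\times$ acting on each indecomposable, yielding the count of $\overline{\F}_q$-forms, i.e.\ the absolutely indecomposable count. This establishes (i). Parts (ii) and (iii) then follow from the classical theorems of Kac: the number $a_\v(\Gamma,q)$ of isomorphism classes of absolutely indecomposable $\F_q$-representations of dimension $\v$ is a polynomial in $q$ with integer coefficients; it is non-zero if and only if $\v$ is a positive root of $\Gamma$; it equals $1$ precisely on real roots; and it is monic of degree $1-\tfrac{1}{2}\,{^t}\v\,{\bf C}_\Gamma\,\v$ on imaginary roots, which matches $d_\muhat/2$ by (\ref{dmu}).

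The main obstacle is the combinatorial verification in the first step: one must check that the particular ingredients defining $\Omega(t)$ --- the choice of $\calH_\lambda$, the use of modified Hall--Littlewood symmetric functions $\tH_\lambda$, and the pairing against $h_\muhat$ --- indeed reassemble Hua's counting formula precisely for the comet-shaped quiver at dimension vector $\v_\muhat$. Once this identification is made, everything else is a direct appeal to Kac's theorems, whose statements have been available since Kac's original papers even though the stronger non-negativity of coefficients (not needed here) required substantial later geometric input.
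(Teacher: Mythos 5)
Your proposal takes essentially the same route as the paper: the paper simply recalls part (i) as one of the main results of \cite{hausel-letellier-villegas2}, whose proof is exactly the identification, via Hua's formula, of $\left\langle \bbV(t),h_\muhat\right\rangle$ with the Kac polynomial of the comet-shaped quiver $\Gamma_\muhat$ at the dimension vector $\v_\muhat$, and it likewise deduces (ii) from (i) using \cite[\S 1.15]{kac} and (iii) from (i) using \cite[\S 1.10]{kac}. Your sketch of the Hua's-formula step is consistent with the cited source, so there is no discrepancy.
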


We recently proved in \cite{hausel-letellier-villegas3}  that the coefficients of $A_\muhat(t)$ are actually non-negative. The assertion (ii) follows from (i) using the results in \cite[\S 1.15]{kac}, and the assertion (iii) follows from (i) and \cite[\S 1.10]{kac}.

\subsection{The generic case}\label{generic}

Let $\muhat=(\mu^1,\dots,\mu^k)\in\oP_n$ with $n\geq 1$. A tuple $(\calX_1,\dots,\calX_k)$ of irreducible characters of $G$ is said to be of type $\muhat$ if for each $i=1,2,\dots,k$, there exists a linear character $\alpha_i:\F_q^\times\rightarrow\C^\times$ such that

$$
\calX_i:=(\alpha_i\circ{\rm det})\cdot\calU_{\mu^i}.
$$

The tuple $(\calX_1,\dots,\calX_k)$ said to be \emph{generic} if the size of the subgroup of ${\rm Irr}\,(\F_q^\times)$ generated by $\alpha_1\cdots\alpha_k$ equals $n$ (see  \cite[Definition 6.8.6]{letellier4}).

Fix an integer $g\geq 0$ and consider $\calE:G\rightarrow\C$, $x\mapsto q^{g\,{\rm dim}\,C_{\GL_n}(x)}$. If $g=1$, note that $\calE$ is the character of the representation of $G$ in the group algebra $\C[\g]$ where $G$ acts on $\g:=\gl_n(\F_q)$ by conjugation.

Define

\begin{equation}
V_\muhat(t):=\left\langle\bbV(t),s_\muhat\right\rangle.
\label{v}\end{equation}
I.e., the $V_\muhat(t)$ are defined by the identity

$$
\sum_{\muhat\in\oP}V_\muhat(t)s_\muhat=\bbV(t).
$$

Recall the following theorem \cite[\S 6.10.6]{letellier4}.

\begin{theorem} For any generic tuple $(\calX_1,\dots,\calX_k)$ of type $\muhat$ we have 

\begin{equation}
\left\langle\calE\otimes\calX_1\otimes\cdots\otimes\calX_k,1\right\rangle_G=V_\muhat(q)\label{multi}
\end{equation}
\label{multit}
\end{theorem}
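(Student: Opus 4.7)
The plan is to unwind the inner product $\langle \calE\otimes\calX_1\otimes\cdots\otimes\calX_k,1\rangle_G$ as a weighted sum over conjugacy classes of $G=\GL_n(\F_q)$, and then match it term by term with the expansion of $V_\muhat(t)=\langle \bbV(t),s_\muhat\rangle$ that comes from the definitions of $\Omega(t)$, $\Log$, and the Cauchy identity. By Green's parametrization, a conjugacy class of $G$ is encoded by a function $\nu\colon\Irr(\overline{\F}_q^\times/\mathrm{Frob})\to\calP$ with $\sum_{[a]}\deg(a)|\nu(a)|=n$; if $C_\nu$ is the corresponding class, then $|C_\nu|=|G|/a_\nu(q)$ and $\calE(C_\nu)=q^{g\cdot\dim C_G(x)}$ is explicitly computable from $\nu$. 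Plugging in $\calX_i=(\alpha_i\circ\det)\cdot\calU_{\mu^i}$, the inner product is
\[
\frac{1}{|G|}\sum_\nu |C_\nu|\,\calE(C_\nu)\left(\prod_{[a]}\prod_j\alpha_i(a)^{|\nu(a)|}\right)\prod_{i=1}^k\calU_{\mu^i}(C_\nu).
\]

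Step two is to insert Green's formula for the unipotent characters. The values $\calU_{\mu}(C_\nu)$ are given by a scalar product (over types) of $s_\mu$ with products of modified Hall--Littlewood symmetric functions $\tilde H_\lambda(\x;t)$ evaluated on the semisimple data of $\nu$. Consequently the whole sum over $\nu$ can be reorganized as the coefficient of $s_\muhat=s_{\mu^1}(\x_1)\cdots s_{\mu^k}(\x_k)$ in a generating series over types, in which each ``block'' of $\nu$ of partition $\lambda$ contributes precisely the hook-type factor $\calH_\lambda(t)$ together with $\prod_i\tilde H_\lambda(\x_i;t)$ twisted by the linear characters $\alpha_i$. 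Schematically this yields an identity of the shape
\[
\sum_{n\ge 0}T^n\sum_{\muhat\in\oP_n}\left\langle\calE\otimes\calX_1\otimes\cdots\otimes\calX_k,1\right\rangle_G s_\muhat \;=\; \bigl(\text{product / exponential form built from }\Omega(t)\bigr),
\]
in which the twist by $\alpha_1\cdots\alpha_k$ enters as a character of the finite group $\Irr(\F_q^\times)$ acting on types.

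The third and key step is to apply the genericity hypothesis. When the subgroup $\langle\alpha_1\cdots\alpha_k\rangle\subset\Irr(\F_q^\times)$ has order $n$, the only types $\nu$ whose twisted contributions survive after averaging over the $\langle\alpha_1\cdots\alpha_k\rangle$-orbits are the ``primitive'' (connected, indecomposable) ones, and the multiplicity with which each primitive type contributes matches exactly the M\"obius-weighted coefficients $C_\omhat^o$ describing $\Log$ in Lemma \ref{Log-w}. This is the standard HLV/Letellier mechanism: genericity turns the full generating function (an $\Exp$) into its pre-image under $\Log$, up to the universal factor $(t-1)$ coming from the $\PGL_n$-versus-$\GL_n$ normalization $|G|=(q-1)|\PGL_n(\F_q)|$. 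In symmetric-function language this is precisely $\bbV(t)=(t-1)\Log\,\Omega(t)$, and extracting the $s_\muhat$-coefficient gives $V_\muhat(q)$.

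The main obstacle is step three: carefully tracking the arithmetic of the twist by $\alpha_1\cdots\alpha_k$ over all semisimple types and showing that it implements $\Log$ rather than $\log$. This requires the Adams operation $\psi_d$ to appear correctly whenever an element of $\nu$ of degree $d$ contributes (so that the $\x_i$ variables get raised to their $d$-th powers, consistently with the definition of $\Omega(t)$), and it requires the genericity hypothesis to kill all the non-primitive terms. Once this matching is established, specialising to a particular $\F_q$ yields the claimed polynomial identity $\langle\calE\otimes\calX_1\otimes\cdots\otimes\calX_k,1\rangle_G=V_\muhat(q)$, with polynomiality in $q$ following from the fact that both sides are defined through universal symmetric-function constructions independent of $q$.
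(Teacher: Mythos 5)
The paper does not prove this theorem at all: it is recalled verbatim from \cite[\S 6.10.6]{letellier4}, and the present text only uses it as an input (to deduce that $V_\muhat(t)$ is a polynomial with non-negative values at prime powers, hence a polynomial with rational coefficients). So there is no internal proof to compare against; what you have written is an outline of the proof that lives in the cited reference.

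As an outline, your strategy is the right one and matches the HLV--Letellier argument: expand the inner product over conjugacy classes via Green's parametrization by maps from Frobenius orbits $O$ on $\overline{\F}_q^\times$ to partitions, insert $\calU_\mu(C)=\langle \tH_{\omega_C}(\x;q),s_\mu(\x)\rangle$ and $\calH_{\omega_C}(q)=\calE(C)/a_C(q)$, and then use genericity to collapse the resulting generating series. But as a proof it has a genuine gap, and you have located it yourself: ``step three'' is the entire mathematical content of the theorem and is asserted, not carried out. Concretely, what must be proved is that for each type $\omega$ the sum over the semisimple data (choices of Frobenius orbits $\gamma_1,\dots,\gamma_r$ of prescribed degrees) weighted by $\prod_j(\alpha_1\cdots\alpha_k)\bigl(N_{\F_{q^{d_j}}/\F_q}(\gamma_j)\bigr)^{|\lambda_j|}$ vanishes unless the type is supported on a single degree $d$ with $d$ dividing suitable sizes, and that the surviving character sums evaluate exactly to $(q-1)$ times the M\"obius coefficients $C_\omhat^o$ of Lemma \ref{Log-w} (compare with the untwisted computation in the proof of Proposition \ref{expgen}, where the same sum instead produces $\phi_d(q)$ and hence an $\Exp$ rather than a $\Log$). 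This is a non-trivial orthogonality/character-sum computation in $\Irr(\F_{q^d}^\times)$ that uses the full strength of the hypothesis that $\alpha_1\cdots\alpha_k$ has order exactly $n$; without it the claimed identity $\bbV(t)=(t-1)\Log\Omega(t)$ matching is not established. A secondary imprecision: the twist enters through $\alpha_i(\det)=\prod_{\gamma}\alpha_i(N(\gamma))^{|f(\gamma)|}$ with $N$ the norm to $\F_q$, not through $\alpha_i(a)$ itself; getting the norms right is exactly what makes the Adams operations $\psi_d$ appear consistently with the degree-$d$ orbits.
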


By definition $V_\muhat(t)$ is a rational function in $t$ with rational coefficients and by the above theorem it is an integer for infinitely many values of $t$. Hence $V_\muhat(t)$ is a polynomial in $t$  with rational coefficients.

\begin{theorem} (1) The polynomial $V_\muhat(t)$ is non-zero if and only if $\v_\muhat\in\Phi(\Gamma_\muhat)$. Moreover $V_\muhat(t)=1$ if and only if $\v_\muhat$ is a real root.

\noindent (2) If non-zero, the polynomial $V_\muhat(t)$ is a monic polynomial of degree $d_\muhat/2$ with non-negative integer coefficients.
\label{theogeneric}
\end{theorem}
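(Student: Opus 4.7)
The plan is to establish Formula~(\ref{posfor}), which expresses $V_\muhat(t)$ as the generating function of multiplicities of $\chi^{\muhat'}$ in the $\mathbb{S}_n$-representations $\rho^i$ on $H_c^{2i}(\calQ,\C)$, and then to read off both assertions from this cohomological description.

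To establish~(\ref{posfor}) I would follow the strategy of~\cite{hausel-letellier-villegas3}. Starting from Theorem~\ref{multit}, one has $V_\muhat(q)=\langle\calE\otimes\calX_1\otimes\cdots\otimes\calX_k,1\rangle$ for any generic tuple $(\calX_1,\dots,\calX_k)$ of type $\muhat$. I would expand this inner product using Green's character formula for $\GL_n(\F_q)$; the genericness of $\alpha_1\cdots\alpha_k$ controls the support of the resulting character sum and reduces it to a sum over semisimple types. A Frobenius-trace argument then identifies this sum with the $\mathbb{S}_n$-equivariant count of $\F_q$-points of the character variety $\M_\muhat$ of \S\ref{CV}. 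One then passes from $\M_\muhat$ to the quiver variety $\calQ$ of \S\ref{pos} via a comparison between the pure part of the mixed Hodge polynomial of $\M_\muhat$ and the Poincar\'e polynomial of $\calQ$ (non-abelian-Hodge-type identification). Extracting the $\chi^{\muhat'}$-isotypic component of the resulting $\mathbb{S}_n$-equivariant trace yields~(\ref{posfor}).

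Granting~(\ref{posfor}), both parts of the theorem fall out. The coefficients $\langle\chi^{\muhat'},\rho^i\rangle_{\mathbb{S}_n}$ are non-negative integers, which gives the positivity and integrality in~(2). A dimension count for $\calQ$ together with purity of its cohomology shows that the $\chi^{\muhat'}$-isotypic component of $H_c^*(\calQ,\C)$ is supported in a range forcing $\deg V_\muhat(t)\leq d_\muhat/2$, while a fundamental-class argument (combined with Poincar\'e duality) shows that the top piece is one-dimensional, yielding monicity and the exact degree $d_\muhat/2$ when $V_\muhat(t)\neq 0$. For~(1), non-vanishing of $V_\muhat(t)$ is equivalent to the occurrence of $\chi^{\muhat'}$ in $H_c^*(\calQ,\C)$; by Nakajima's theorem relating $\mathbb{S}_n$-isotypic components of quiver-variety cohomology to the root system of the comet-shaped quiver, this happens if and only if $\v_\muhat\in\Phi(\Gamma_\muhat)$. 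When $\v_\muhat$ is a real root, the identity $({\v_\muhat},{\v_\muhat})=2$ forces $d_\muhat=0$, so the monic polynomial $V_\muhat(t)$ equals $1$; conversely, if $\v_\muhat$ is imaginary then $d_\muhat\geq 2$ and $V_\muhat(t)$ has positive degree.

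The hard part is establishing~(\ref{posfor}). Two specific obstacles stand out. First, the $\mathbb{S}_n$-action on $H_c^*(\calQ,\C)$ must be constructed: the author has already noted in \S\ref{pos} that the Weyl-group construction of~\cite{letellier4} does not apply in the present unipotent situation, so one has to invoke the more general quiver-variety frameworks of Nakajima, Lusztig and Maffei. Second, one has to carry out the analogue for unipotent types of the character-variety/quiver-variety comparison of~\cite{hausel-letellier-villegas3}, matching point counts of $\M_\muhat$ with $\mathbb{S}_n$-equivariant Poincar\'e polynomials of $\calQ$; this is the technical heart of the proof. Once both ingredients are in hand, the deductions of~(i) and~(ii) are essentially formal.
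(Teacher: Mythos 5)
Your proposal matches the paper only for the positivity of the coefficients: the paper does establish Formula~(\ref{posfor}) by the point-count/Weyl-group-action argument of \cite{hausel-letellier-villegas3} exactly as you describe, and reads off non-negativity of the coefficients from the multiplicities $\langle\chi^{\muhat'},\rho^i\rangle$. But for the root criterion in (1) and the degree/monicity in (2) the paper takes a completely different route that you are missing: it uses the unitriangular Kostka-matrix relation between the complete and Schur bases to write $A_\lambdahat(t)=\sum_{\muhat\unrhd\lambdahat}K'_{\lambdahat\muhat}V_\muhat(t)$ and $V_\muhat(t)=\sum_{\lambdahat\unrhd\muhat}K^*_{\muhat\lambdahat}A_\lambdahat(t)$, where $A_\muhat(t)$ is the Kac polynomial counting absolutely indecomposable representations of $\Gamma_\muhat$ of dimension $\v_\muhat$ (Theorem~\ref{kactheo}). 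Everything then follows from the known properties of $A_\muhat(t)$ (non-zero iff $\v_\muhat$ is a root, equal to $1$ iff real, monic of degree $d_\muhat/2$), the strict degree drop $d_\betahat<d_\alphahat$ for $\alphahat\lhd\betahat$ (Lemma~\ref{lem1}), and the non-negativity of the evaluations $V_\alphahat(q)$ coming from Theorem~\ref{multit}.

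Two steps in your plan are genuine gaps rather than omitted details. First, for (1) you invoke a ``Nakajima theorem'' asserting that $\chi^{\muhat'}$ occurs in $H_c^*(\calQ,\C)$ if and only if $\v_\muhat\in\Phi(\Gamma_\muhat)$. No such statement is available here: the comet-shaped quiver has loops at the central vertex, the paper explicitly says the full $\mathbb{S}_n$-action cannot be obtained from the construction of \cite{letellier4}, and in fact the paper presents the root criterion for isotypic components as a \emph{consequence} of Theorem~\ref{theo19} combined with~(\ref{posfor}), not as an input. Your deduction is therefore circular relative to what is actually known. Second, your fundamental-class/Poincar\'e-duality argument for degree and monicity would locate the top of $V_\muhat(t)$ at $\dim\calQ/2=d/2$ with $d=n^2(2g-2+k)-kn+2$, but the correct degree is $d_\muhat/2$, which depends on $\muhat$ and is in general strictly smaller than $d/2$; the largest $i$ with $\langle\chi^{\muhat'},\rho^i\rangle\neq 0$ sits in the middle of the cohomology, not at the top, and controlling it requires precisely the $A$-polynomial degree bounds and the Kostka unitriangularity that your plan omits.
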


For $\alphahat=(\alpha^1,\dots,\alpha^k),\betahat=(\beta^1,\dots,\beta^k)\in\oP_n$, say that $\alphahat\unlhd\betahat$ if  $\alpha^i\unlhd\beta^i$ for all $i=1,\dots,k$. We will need the following lemma.

\begin{lemma} If $\alphahat,\betahat\in\oP_n$ are such that $\alphahat\unlhd\betahat$ and $\alphahat\neq\betahat$ then $d_\betahat< d_\alphahat$.
\label{lem1}
\end{lemma}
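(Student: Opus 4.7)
The plan is to expand the definition of $d_\muhat$ and reduce the statement to a classical monotonicity property of sums of squares on the dominance poset. From
\begin{equation*}
d_\muhat = n^2(2g-2+k) - \sum_{i,j}(\mu^i_j)^2 + 2
\end{equation*}
I immediately get $d_\alphahat - d_\betahat = \sum_{i,j}(\beta^i_j)^2 - \sum_{i,j}(\alpha^i_j)^2$, so the inequality $d_\betahat < d_\alphahat$ is equivalent to showing
\begin{equation*}
\sum_{i,j}(\alpha^i_j)^2 < \sum_{i,j}(\beta^i_j)^2.
\end{equation*}

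Next I would reduce to the single-partition case. By definition of $\unlhd$ on $\oP_n$ (coordinatewise dominance) and the assumption $\alphahat \neq \betahat$, there exists an index $i_0$ with $\alpha^{i_0} \unlhd \beta^{i_0}$ and $\alpha^{i_0} \neq \beta^{i_0}$, while $\alpha^i \unlhd \beta^i$ for all other $i$. Thus it suffices to establish the classical fact: for partitions $\lambda, \mu \in \calP_n$ with $\lambda \unlhd \mu$, one has $\sum_j \lambda_j^2 \leq \sum_j \mu_j^2$, with strict inequality whenever $\lambda \neq \mu$. Summing this over $i$, and using strict inequality at $i=i_0$, yields the desired strict inequality on the full sum.

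For the single-partition statement, the plan is to invoke the well-known fact that the dominance order on $\calP_n$ is generated by elementary raising moves: $\mu$ covers $\lambda$ if and only if there exist indices $p < q$ with $\mu_p = \lambda_p + 1$, $\mu_q = \lambda_q - 1$, and $\mu_r = \lambda_r$ for $r \notin \{p,q\}$, the result still being a partition. Under such a move the sum of squares changes by
\begin{equation*}
(\lambda_p+1)^2 + (\lambda_q-1)^2 - \lambda_p^2 - \lambda_q^2 \;=\; 2(\lambda_p - \lambda_q) + 2 \;\geq\; 2,
\end{equation*}
since $p < q$ forces $\lambda_p \geq \lambda_q$. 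Traversing any saturated chain from $\lambda$ to $\mu$ therefore produces a strict increase. (Equivalently, this is the Hardy--Littlewood--P\'olya inequality applied to the strictly convex function $x \mapsto x^2$.)

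There is no real obstacle: the whole lemma is a direct consequence of the strict Schur-convexity of $\sum_j \lambda_j^2$ on $(\calP_n, \unlhd)$, extended componentwise to multi-partitions. The only care needed is to ensure the strict inequality survives the passage from one component to the full sum, which is immediate because the weak inequality holds in each component.
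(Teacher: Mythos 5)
Your proof is correct, and its overall skeleton is the same as the paper's: expand $d_\muhat$, cancel the $n^2(2g-2+k)+2$ terms, reduce componentwise to the claim that for partitions $\lambda\unlhd\mu$ of the same size with $\lambda\neq\mu$ one has $\sum_j\lambda_j^2<\sum_j\mu_j^2$. Where you diverge is in the proof of that core inequality. The paper establishes it by a self-contained telescoping identity, writing $\sum_{i\leq l}\mu_i^2-\sum_{i\leq l}\lambda_i^2$ as a sum of products of partial sums, each factor non-negative by dominance and at least one product positive when $\lambda\neq\mu$. You instead invoke the structure of the dominance poset: every saturated chain from $\lambda$ to $\mu$ decomposes into single-box raising moves, and each such move increases the sum of squares by $2(\lambda_p-\lambda_q)+2\geq 2$. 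Your route is clean and conceptually transparent (it is just strict Schur-convexity of $x\mapsto x^2$), but it leans on an external fact — Brylawski's characterization of covers in the dominance lattice, or at least the standard raising-operator decomposition of $\unlhd$ — whereas the paper's identity needs nothing beyond the definition of $\unlhd$. One small caution: your stated "if and only if" description of covers is not quite the correct characterization (a single-box move need not be a cover unless $q=p+1$ or $\lambda_p=\lambda_q$), but only the direction "every cover is a single-box move" is used, and that direction is true, so the argument stands.
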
\begin{proof} We need to see that for two partitions $\lambda=(\lambda_1,\dots,\lambda_r)$ and $\mu=(\mu_1,\dots,\mu_s)$ such that $\lambda\unlhd\mu$ and $\lambda\neq\mu$, we have $\sum_{i=1}^r\lambda_i^2<\sum_{i=1}^s\mu_i^2$. This follows from the formula$$\sum_{i=1}^l\mu_i^2-\sum_{i=1}^l\lambda_i^2=(\mu_1-\lambda_1)(\mu_1-\mu_2+\lambda_1-\lambda_2)+(\mu_1+\mu_2-\lambda_1-\lambda_2)(\mu_2-\mu_3+\lambda_2-\lambda_3)+\cdots+\left(\sum_{i=1}^l\mu_i-\sum_{i=1}^l\lambda_i\right)(\mu_s+\lambda_s)$$which is available for all $l$ (with the convention that $\lambda_i=0$ if $i>r$).\end{proof}

\begin{proof}[Proof of Theorem \ref{theogeneric}] We have the following relations between Schur and complete symmetric functions \cite[page 101]{macdonald}:\begin{equation*}h_\lambda=\sum_{\mu\unrhd\lambda} K_{\lambda\mu}'s_\mu,\hspace{.5cm}s_\mu=\sum_{\lambda\unrhd\mu} K^*_{\mu\lambda}h_\lambda,\end{equation*}where $K=(K_{\lambda\mu})_{\lambda,\mu}$ is the matrix whose coefficients are Kostka numbers, $K'=(K_{\lambda\mu}')_{\lambda,\mu}$ is the transpose of $K$ and $K^*=(K^*_{\lambda\mu})_{\lambda,\mu}$ is the transpose inverse of $K$. By Formulas (\ref{h}) and (\ref{v}) we have for any $\lambdahat,\muhat\in\oP-\{0\}$\begin{equation}A_\lambdahat(t)=\sum_{\muhat\unrhd\lambdahat} K_{\lambdahat\muhat}'V_\muhat(t),\hspace{.5cm}V_\muhat(t)=\sum_{\lambdahat\unrhd\muhat} K^*_{\muhat\lambdahat}A_\lambdahat(t),\label{eq1}\end{equation}where $K_{\lambdahat\muhat}:=\prod_{i=1}^kK_{\lambda^i\mu^i}$. Assume that $\v_\muhat\in\Phi(\Gamma_\muhat)$. By Theorem \ref{kactheo} the polynomial $A_\muhat(t)$ is monic of degree $d_\muhat/2$. Moreover $K^*_{\muhat\muhat}=1$ and by Lemma \ref{lem1} the degree of the polynomials $A_\lambdahat(t)$, with $\lambdahat\unrhd\muhat,\lambdahat\neq\muhat$, are of degree strictly smaller than $d_\muhat/2$. Hence we deduce from the second formula  (\ref{eq1}) that $V_\muhat(t)$ is non-zero and is a monic polynomial of degree $d_\muhat/2$. Note that if $\v_\muhat$ is real, then $A_\muhat(t)=1$ and $A_\lambdahat(t)=0$ if $\lambdahat\unrhd\muhat,\lambdahat\neq\muhat$ as $d_\lambdahat<d_\muhat=0$, and so $V_\muhat(t)=1$. Assume now that $V_\muhat(t)\neq 0$. Recall that $K_{\lambdahat\muhat}$ are non-negative integers and that $K_{\muhat\muhat}=1$. Moreover, for all $\alphahat\in\oP-\{0\}$,  the evaluation $V_\alphahat(q)$ of $V_\alphahat(t)$ at $q$ is a non-negative integer by Theorem \ref{multit}. Hence, by the first formula (\ref{eq1}), the polynomial $A_\muhat(t)$ must be non-zero and so, by Theorem \ref{kactheo}, the dimension vector $\v_\muhat$ is a root.

Let us now outline the proof of the positivity which is similar to the proof of the main result of \cite{hausel-letellier-villegas3}. Denote by $\K$ an arbitrary algebraic closure of $\F_q$ and put $\gl_n:=\gl_n(\K)$. Denote by $F:\gl_n\rightarrow\gl_n$ the Frobenius endomorphism that raises coefficients of matrices to their $q$-th power. Say that a tuple $(\calC_1,\dots,\calC_k)$ of adjoint orbits of $\gl_n$ is \emph{generic} \cite[\S 5.1]{letellier4} if $\sum_{i=1}^k{\rm Tr}\,(\calC_i)=0$ and if for any subspace $V\subset\K^n$ stable by some $X_i\in\calC_i$ for each $i=1,\dots,k$, such that

$$
\sum_{i=1}^k{\rm Tr}\,(X_i|_V)=0
$$
then either $V=0$ or $V=\K^n$. Generic tuples of semisimple regular adjoint orbits always exists \cite[\S 2.2]{hausel-letellier-villegas}\cite[\S 5.1]{letellier4}. Recall that the $G$-conjugacy classes of $F$-stable maximal tori of $\GL_n=\GL_n(\K)$ are parametrized by the conjugacy classes of $\mathfrak{S}_n$. For $w$ in the symmetric group  $\mathfrak{S}_n$ we denote by $T_w$ a representative of the corresponding $G$-conjugacy class of maximal tori. Say that an $F$-stable regular semisimple adjoint orbit of $\gl_n$ is of type $w\in\mathfrak{S}_n$ if it has a non-empty intersection with $\t_w^F$ where $\t_w:={\rm Lie}(T_w)$. Denote by $\mathbb{S}_n=\mathfrak{S}_n\times\cdots\times\mathfrak{S}_n$ the Cartesian product of $k$ copies of $\mathfrak{S}_n$. Now for each conjugacy class of $\mathbb{S}_n$ with representative $\w=(w_1,\dots,w_k)$ choose a generic tuple $(\calC^{w_1},\dots,\calC^{w_k})$ of $F$-stable semisimple regular adjoint orbits of $\gl_n$ of type $\w$ (such a choice is possible for any $\w$ assuming that $q$ is sufficiently large which we now assume). Consider the space

$$
\calV^\w:=\left\{(A_1,B_1,\dots,A_g,B_g,X_1,\dots,X_k)\in\gl_n^{2g}\times\calC^{w_1}\times\cdots\times\calC^{w_k}\,\left|\,\sum_i[A_i,B_i]+\sum_j X_j=0\right.\right\},
$$
and the affine GIT quotient

$$
\calQ^\w:=\calV^\w/\!/\GL_n:={\rm Spec}\,\left(\K[\calV^\w]^{\GL_n}\right)
$$
where $\GL_n$ acts on $\calV^\w$ diagonally by conjugation. It is well-known (see for instance \cite{hausel-letellier-villegas}) that $\calQ^\w$ is non-singular, irreducible and has vanishing odd cohomology. If $\w=1$, we will simply write $\calQ$ instead of $\calQ^1$. We know by Lemma 7.2.1, Theorem 6.9.1 and Theorem 6.10.1 in \cite{letellier4}  that 

\beq
\epsilon(\w_\lambdahat)\,\,\#\calQ^{\w_\lambdahat}(\F_q)=q^{d/2}\left\langle \bbV(q),p_\lambdahat\right\rangle
\label{poly}\eeq
where 

$$d:=n^2(2g-2+k)-kn+2$$
is the dimension of $\calQ$, $\epsilon$ is the sign character of $\mathbb{S}_n$ and where $\w_\lambdahat\in\mathbb{S}_n$ is in the conjugacy class corresponding to the multi-partition $\lambdahat\in\oP$. By definition $X_\lambda(t):=\left\langle \bbV(t),p_\lambdahat\right\rangle$ is a rational function in $t$ and by (\ref{poly}) the function $t^{d/2}X_\lambda(t)$ is an integer for infinitely many values of $t$, therefore $t^{d/2}X_\lambda(t)$ must by a polynomial in $t$ with rational coefficients.

On the other hand, exactly as in \cite[\S 2.2.1]{hausel-letellier-villegas3} we can prove (assuming that the characteristic is large enough) that there exists, for each $i$, a representation $\rho^i:\mathbb{S}_n\rightarrow\GL(H_c^{2i}(\calQ))$, where $H_c^{2i}(\calQ)$ denotes the compactly supported $\ell$-adic cohomology of $\calQ$, such that 

\beq
\#\calQ^\w(\F_q)=\sum_{i=d/2}^d{\rm Trace}\,\left(\rho^i(\w)\,|\, H_c^{2i}(\calQ)\right) q^i.
\label{W}\eeq
Note that the character $\psi^i:\mathbb{S}_n\rightarrow \Q$, $\w\mapsto {\rm Trace}\,\left(\rho^i(\w)\,|\, H_c^{2i}(\calQ)\right)$ of the representation $\rho^i$ does not depend on $q$. \emph{A priori} it should depend on the characteristic of $\K$ but it does not because it follows from the identities (\ref{poly}) and (\ref{W}) that the values of the characters of the representations $\rho^i$ are given by the coefficients of the polynomials $t^{d/2}X_\lambda(t)\in\Q[t]$. If $\calQ/\C$ denotes the complex analogue of $\calQ$ (see \ref{pos}), then we know by \cite[see below Lemma 48]{maffei} that there is a representation $\rho^i_\C$ of $W_\v$ on $H_c^{2i}(\calQ/\C,\C)$. We can actually prove (as in \cite[\S 2.2.1]{hausel-letellier-villegas3}) that  $\psi^i$ is also the character of $\rho^i_\C$. From the identity $s_\mu=\sum_\lambda z_\lambda^{-1}\chi^\mu_\lambda p_\lambda$ we find that 

$$
V_\muhat(q)=\sum_\lambdahat z_\lambdahat^{-1}\chi^\muhat_\lambdahat\left\langle\bbV(q),p_\lambdahat\right\rangle.$$

Combining Formulas (\ref{W}) and (\ref{poly}) we get that
\begin{align*}
V_\muhat(q)&=q^{-d/2}\sum_i \left(\sum_\lambdahat z_\lambdahat^{-1}\chi^\muhat_\lambdahat \epsilon(\w_\lambdahat) \psi^i(\w_\lambdahat)\right)q^i\\
&=q^{-d/2}\sum_i\left\langle \chi^{\muhat'},\psi^i\right\rangle_{\mathbb{S}_n} q^i,
\end{align*}
where $\muhat'$ denote the dual multi-partition of $\muhat$. Since these identities are true for infinitely many values of $q$, the coefficients of the polynomial  $V_\mu(t)$ coincide with the multiplicities $\left\langle \chi^{\muhat'},\psi^i\right\rangle_{\mathbb{S}_n}$ and therefore are non-negative integers.
\end{proof}

\subsection{The unipotent case}

For $\muhat\in\oP-\{0\}$, define

$$
U_\muhat(t):=\left\langle\bbU(t),s_\muhat\right\rangle
$$
where $\bbU(t)$ is as in \S \ref{cauchy} and put $U_\muhat(t):=1$ if $\muhat=0$.

\begin{proposition}For all $\muhat\in\oP-\{0\}$, we have

\begin{equation}
U_\muhat(q)=\left\langle\calE\otimes\calU_{\mu^1}\otimes\cdots\otimes\calU_{\mu^k},1\right\rangle.
\label{genvuni2}\end{equation}
\label{expgen}\end{proposition}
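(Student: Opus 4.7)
The plan is to compute the generating function
$$
S(T) := 1 + \sum_{\muhat \in \oP - \{0\}} \left\langle \calE \otimes \calU_{\mu^1} \otimes \cdots \otimes \calU_{\mu^k}, 1 \right\rangle \, s_\muhat \, T^{|\muhat|}
$$
by direct character theory and to identify it with $\bbU(t)=\Exp\,\bbV(t)$, so that extracting the coefficient of $s_\muhat$ gives $\langle \bbU(t), s_\muhat \rangle\big|_{t=q} = U_\muhat(q)$ on one side and the claimed inner product on the other.

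First, I would apply the inner product formula and organize the sum over $\GL_n(\F_q)$ by the standard parametrization of conjugacy classes: a type is a function $\omega : \Phi \to \calP$, where $\Phi$ is the set of monic irreducible polynomials of $\F_q[t]$ other than $t$, subject to $\sum_{f} (\deg f) |\omega(f)| = n$. Green's character formula expresses each $\calU_{\mu^i}(x)$, for $x$ of type $\omega$, in terms of the Kostka--Foulkes polynomials $\tilde K_{\mu^i,\nu}(q)$, which assembles into a factorization
$$
\sum_{\mu \in \calP_n} \calU_\mu(x) \, s_\mu(\x) \;=\; \prod_{f \in \Phi} \tilde H_{\omega(f)}\!\bigl(\x^{\deg f};\, q^{\deg f}\bigr)
$$
up to an explicit normalization which cancels against the centralizer contribution. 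Since $\calE(x)$ and $|C_{\GL_n(\F_q)}(x)|$ also depend only on $\omega$ and factor as products over $f \in \Phi$, the inner sum over all $x$ of a fixed type reduces to a product indexed by $\Phi$.

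Second, summing over all types $\omega$ and all $n$ would assemble $S(T)$ into
$$
S(T) \;=\; \prod_{f \in \Phi} \Omega\bigl(\x_1^{\deg f}, \dots, \x_k^{\deg f};\, q^{\deg f}\bigr),
$$
the hook factor $\calH_\lambda(q^d)$ in $\Omega$ being exactly what remains after matching centralizer orders in each local factor. Taking the ordinary logarithm and regrouping by $d := \deg f$ gives $\log S = \sum_{d \geq 1} c_d \, \psi_d(\log \Omega(t))$ with $c_d$ the number of degree-$d$ monic irreducibles of $\F_q[t]$ other than $t$; the generating function for the $c_d$'s reduces, by standard M\"obius inversion on $\F_q^\times$, to the element $g = t - 1$ in the sense of the notation preceding Lemma \ref{moz}.

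Third, Lemma \ref{moz} precisely converts such an identity into a plethystic exponential: applied with $f_1 = S$, $f_2 = \Omega$ and $g = t - 1$, it yields $\Log S = (t-1)\Log \Omega = \bbV(t)$, and therefore $S = \Exp \bbV(t) = \bbU(t)$, which is the required equality. The main obstacle is the bookkeeping in the second step, namely matching the factor $a_\lambda(q)^{-1}$ hidden in $\calH_\lambda(q)$ with the centralizer orders in Green's parametrization, and correctly handling the exclusion of $f = t$ (i.e.\ the restriction to invertible matrices), which is the arithmetic origin of the $(t-1)$ prefactor in $\bbV$ and hence of the $g = t-1$ input into Lemma \ref{moz}.
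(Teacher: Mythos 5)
Your proposal is correct and follows essentially the same route as the paper: factor the class-function sum over conjugacy classes (indexed by types) into a product of $\psi_d(\Omega)$ over irreducible polynomials/Frobenius orbits, identify the hook factor $\calH_\lambda$ with $\calE(C)/a_C(q)$, recognize the count of degree-$d$ irreducibles (excluding $t$) as the M\"obius data $g_d$ for $g=t-1$, and invoke Lemma \ref{moz} to conclude $\Log S=(q-1)\Log\Omega=\bbV(q)$. The only cosmetic difference is your indexing by monic irreducible polynomials rather than the paper's $\langle F\rangle$-orbits on $\overline{\F}_q^\times$, which are in canonical bijection.
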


Recall the following relation between $V_\muhat(t)$ and $U_\muhat(t)$,
\begin{equation}
\sum_\muhat U_\muhat(t)s_\muhat=\Exp\left(\sum_\muhat V_\muhat(t) s_\muhat\right).
\label{genvuni}\end{equation}
To prove Proposition \ref{expgen} we  recall the definition of the type of a conjugacy class  of $G$. Let $F:\overline{\F}_q\rightarrow\overline{\F}_q$, $x\mapsto x^q$ be the Frobenius endomorphism and let $O$ be the set of $\langle F\rangle$-orbits of $\overline{\F}{^\times_q}$. The conjugacy classes of $G$ corresponds to the maps $f:O\rightarrow \P$ such that 

$$
\sum_{\gamma\in O}|\gamma|\cdot|f(\gamma)|=n.
$$
Let $C$ be a conjugacy class of $G$ corresponding to such a function $f$. The type of $C$ is the function $\omega_C=\omega_f\in\bT_n$ defined by 

$$
\omega_C(d,\lambda):=\#\left.\left\{\gamma\in O\,\right|\,(d,\lambda)=(|\gamma|,f(\gamma))\right\}.
$$

\begin{proof}[Proof of Proposition \ref{expgen}] We have

\begin{align*}
\left\langle\calE\otimes\calU_{\mu^1}\otimes\cdots\otimes\calU_{\mu^k},1\right\rangle&=\frac{1}{|G|}\sum_{g\in G}\calE(g)\calU_{\mu^1}(g)\cdots\calU_{\mu^k}(g)\\
&=\sum_C\frac{\calE (C )}{a_C(q)}\prod_{i=1}^k\calU_{\mu^i}(C )
\end{align*}
where the last sum is over the conjugacy classes of $G$ and where $a_C(q)$ denotes the cardinality of the centralizer in $G$ of an element of $C$. It is well-known (see for instance \cite[Theorem 2.2.2]{hausel-letellier-villegas2}) that for any conjugacy class $C$ of $G$ and any partition $\mu$ of $n$  we have

$$
\calU_\mu (C )=\left\langle \tH_{\omega_C}(\x;q),s_\mu(\x)\right\rangle.
$$
By Formula (\ref{hookpure}) we also have 

$$
\calH_{\omega_C}(q)=\frac{\calE (C )}{a_C(q)}.
$$

Let ${\bf C}_n$ be the set of conjugacy classes of $\GL_n(\F_q)$ and put ${\bf C}=\cup_{n\geq 1}{\bf C}_n$. Denote also by $\calP^O$ the set of all function $O\rightarrow\calP$ with finite support. If $0$ denotes the function that take the value $0$ everywhere, we put $\calH_{\omega_0}(q)=\tH_{\omega_0}(\x;q)=1$.

\begin{align*}
\sum_{\muhat\in\oP}\left\langle\calE\otimes\calU_{\mu^1}\otimes\cdots\otimes\calU_{\mu^k},1\right\rangle s_\muhat&=1+\sum_{C\in{\bf C}}\calH_{\omega_C}(q)\prod_i\tH_{\omega_C}(\x_i;q)\\
&=\sum_{f\in\calP^O}\calH_{\omega_f}(q)\prod_i\tH_{\omega_f}(\x_i;q)\\
&=\prod_{\gamma\in O}\Omega\left(\x_1^{|\gamma|},\dots\x_k^{|\gamma|};q^{|\gamma|}\right)\\
&=\prod_{d=1}^\infty\Omega\left(\x_1^d,\dots,\x_k^d;q^d\right)^{\phi_d(q)}
\end{align*}where $\phi_d(q)$ denotes the number of elements in $O$ of size $d$. Recall that 

$$ 
\phi_n(q)=\frac{1}{n}\sum_{d|n}\mu(d)(q^{n/d}-1).
$$
By Proposition \ref{moz} we deduce that 

$$
\Log\left(\sum_{\muhat\in\oP}\left\langle\calE\otimes\calU_{\mu^1}\otimes\cdots\otimes\calU_{\mu^k},1\right\rangle s_\muhat\right)=(q-1)\,\Log\,\Omega(q).
$$
\end{proof}

From Proposition \ref{expgen}, Theorem \ref{THEO1} and Theorem \ref{theogeneric}(2) we deduce the following one.

\begin{theorem} Let $\muhat\in\oP-\{0\}$. We have the following assertions.

\noindent (i) The polynomial $U_\muhat(t)$ has non-negative integer coefficients.

\noindent (ii) The polynomial $U_\muhat(t)$ is non-zero if and only if there exists $\tomhat=\alphahat_1^{n_1}\cdots\alphahat_s^{n_s}\in\otT$ and $(\tau^1,\dots,\tau^s)\in\calR_{\tomhat,\,\muhat}$ such that for all $i=1,\dots,s$ we have 

\begin{equation}
\ell(\tau^i)\leq V_{\alphahat_i}(1).
\label{in1}\end{equation}
\label{maintheo1}\end{theorem}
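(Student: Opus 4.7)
The plan is to combine the three preparatory results essentially immediately. First I would recall from Proposition \ref{expgen} (in its packaged form as Equation (\ref{genvuni})) that the family $\{U_\muhat(t)\}_{\muhat\in\oP-\{0\}}$ is exactly the family produced by applying the plethystic exponential to $\sum_\muhat V_\muhat(t)s_\muhat$. In other words, the $U_\muhat(t)$ fit precisely into the abstract framework of \S\ref{tech}, with the input family being the generic-case polynomials $V_\muhat(t)$.

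Next I would verify the hypothesis needed to apply Theorem \ref{THEO1}, namely that each $V_\alphahat(t)$ has non-negative integer coefficients. This is automatic: if $V_\alphahat(t)=0$ the condition is vacuous, and otherwise Theorem \ref{theogeneric}(2) asserts that $V_\alphahat(t)$ is monic of degree $d_\alphahat/2$ with non-negative integer coefficients. So the hypothesis of Theorem \ref{THEO1} holds for all multi-partitions $\alphahat$.

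Finally I would invoke Theorem \ref{THEO1} directly. Its conclusion (i) gives assertion (i) of the present theorem: $U_\muhat(t)\in\N[t]$. Its conclusion (ii) gives assertion (ii) verbatim: $U_\muhat(t)\neq 0$ if and only if there exists $\tomhat=\alphahat_1^{n_1}\cdots\alphahat_s^{n_s}\in\otT$ and $(\tau^1,\dots,\tau^s)\in\calR_{\tomhat,\muhat}$ with $\ell(\tau^i)\leq V_{\alphahat_i}(1)$ for each $i=1,\dots,s$.

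There is no genuine obstacle here, since all the work has been done upstream: the combinatorial/representation-theoretic content is packaged in Theorem \ref{THEO1}, and the positivity of the input polynomials $V_\alphahat(t)$ is packaged in Theorem \ref{theogeneric}(2) (whose proof in turn relies on the quiver variety interpretation via Formula (\ref{posfor}) and the $\mathbb{S}_n$-action on $H_c^*(\calQ,\C)$). The only thing to keep track of is the bookkeeping, namely that the $\Exp$ identity of Proposition \ref{expgen} matches the one assumed at the start of \S\ref{tech}.
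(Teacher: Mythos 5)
Your proposal is correct and is exactly the paper's argument: the paper deduces Theorem \ref{maintheo1} by combining Proposition \ref{expgen}, Theorem \ref{THEO1}, and Theorem \ref{theogeneric}(2), just as you do. The only detail worth noting is that you correctly identified the one hypothesis to check (non-negativity of the coefficients of the $V_{\alphahat}(t)$), which the paper leaves implicit.
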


\begin{remark} Note that, by Theorem \ref{theogeneric} (1), the inequality (\ref{in1}) does not hold unless $\v_{\alphahat_i}$ is a root of $\Gamma_{\alphahat_i}$. 
\label{remroot}
\end{remark}

We are now going to give a simple sufficient condition for $U_\muhat(t)$ to be non-zero.

Denote by $\otTp$ the set of $\tomhat=\alphahat_1^{n_1}\cdots\alphahat_s^{n_s}\in\otT$ such that $\v_{\alphahat_i}$ is root of $\Gamma_{\alphahat_i}$.
By Proposition \ref{expgen}, \S  \ref{tech} and Remark \ref{remroot} we have a decomposition

\begin{equation}
U_\muhat(t)=\sum_{\tomhat\in\otTp}W_\muhat^\tomhat(t)
\label{decomp}\end{equation}
where 

$$
W_\muhat^\tomhat(t)=\sum_{\omhat\in\overline{\mathfrak{H}}{^{-1}}(\tomhat)}A_\omhat^oV_\omhat(t)c_\omhat^\muhat.$$

For $\tomhat=\alphahat_1^{n_1}\cdots\alphahat_s^{n_s}\in\otTp$, put 

$$
d_\tomhat:=\sum_{i=1}^sn_id_{\alphahat_i}
$$
where $d_\alphahat$, with $\alphahat\in\oP$, is given by Formula (\ref{dmu}).

For all $\omhat\in\overline{\mathfrak{H}}{^{-1}}(\tomhat)$, the degree of the polynomial $V_\omhat(t)$  is $d_\tomhat/2$ by Theorem \ref{theogeneric} (2).

By Remark \ref{inner} we have

$$
\sum_{\omhat\in\overline{\mathfrak{H}}{^{-1}}(\tomhat)}A_\omhat^oc_\omhat^\muhat=\left\langle \C_\tomhat^\muhat,1\right\rangle_{W_\tomhat}.
$$
We deduce the following proposition.

\begin{proposition} Let  $\tomhat=\alphahat_1^{n_1}\cdots\alphahat_s^{n_s}\in\otTp$. If $\left\langle \C_\tomhat^\muhat,1\right\rangle_{W_\tomhat}\neq 0$, then 
$$
\left\langle \C_\tomhat^\muhat,1\right\rangle_{W_\tomhat} q^{d_\tomhat}
$$
is the term of $W_\muhat^\tomhat(t)$ of highest degree. In particular, if for all $i=1,\dots,s$ the vector $\v_{\alphahat_i}$ is a real root (in which case $d_{\alphahat_i}=0$), then 

$$
W_\muhat^\tomhat(t)=\left\langle \C_\tomhat^\muhat,1\right\rangle_{W_\tomhat}.
$$
\label{suffcond}\end{proposition}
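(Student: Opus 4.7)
The plan is to read the leading term of $W_\muhat^\tomhat(t)$ directly from the factorization of each $V_\omhat$ with $\omhat\in\overline{\mathfrak{H}}^{-1}(\tomhat)$, and then identify the leading coefficient with the invariants via Remark \ref{inner}.

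First I would unravel the definition of $V_\omhat$. By the convention in \S\ref{parttype}, for $\omhat\in\overline{\mathfrak{H}}^{-1}(\tomhat)$ we have
$$V_\omhat(t)=\prod_{(d,\,\alphahat_i)\in S_\omhat}\psi_d\bigl(V_{\alphahat_i}\bigr)(t)^{\omhat(d,\alphahat_i)}=\prod_{i=1}^s\prod_{d}V_{\alphahat_i}(t^d)^{\omhat(d,\alphahat_i)},$$
since $V_{\alphahat_i}(t)\in\Q[t]$ does not involve the $\x$-variables. Because $\tomhat\in\otTp$, Theorem \ref{theogeneric}(2) applies to every $V_{\alphahat_i}$: it is a monic polynomial of degree $d_{\alphahat_i}/2$ with non-negative integer coefficients. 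Hence each factor $V_{\alphahat_i}(t^d)^{\omhat(d,\alphahat_i)}$ is monic, and the degree of $V_\omhat(t)$ is
$$\sum_{i=1}^s\sum_{d}d\cdot\omhat(d,\alphahat_i)\cdot\frac{d_{\alphahat_i}}{2}=\sum_{i=1}^s n_i\,\frac{d_{\alphahat_i}}{2}=\frac{d_\tomhat}{2},$$
using the defining relation $n_i=\tomhat(\alphahat_i)=\sum_d d\cdot\omhat(d,\alphahat_i)$ of the fibre.

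Since every $V_\omhat(t)$ appearing in $W_\muhat^\tomhat(t)=\sum_{\omhat\in\overline{\mathfrak{H}}^{-1}(\tomhat)}A_\omhat^o V_\omhat(t)c_\omhat^\muhat$ is monic of the same degree $d_\tomhat/2$, the coefficient of $t^{d_\tomhat/2}$ in $W_\muhat^\tomhat(t)$ is simply
$$\sum_{\omhat\in\overline{\mathfrak{H}}^{-1}(\tomhat)}A_\omhat^o\,c_\omhat^\muhat.$$
By Remark \ref{inner} this sum equals $\langle \C_\tomhat^\muhat,1\rangle_{W_\tomhat}$; under the hypothesis that this is non-zero it is therefore the leading coefficient, proving the first statement.

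For the \emph{in particular} clause, note that when every $\v_{\alphahat_i}$ is a real root, Theorem \ref{theogeneric}(1) gives $V_{\alphahat_i}(t)=1$ (and $d_{\alphahat_i}=0$), so $V_\omhat(t)=1$ for every $\omhat\in\overline{\mathfrak{H}}^{-1}(\tomhat)$ and $W_\muhat^\tomhat(t)$ collapses to the constant $\sum_\omhat A_\omhat^o c_\omhat^\muhat=\langle \C_\tomhat^\muhat,1\rangle_{W_\tomhat}$. No real obstacle is anticipated: the content is entirely the leading-term bookkeeping combined with Theorem \ref{theogeneric}(2) and the identification of the constant term already recorded in Remark \ref{inner}.
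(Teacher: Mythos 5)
Your argument is correct and is essentially the paper's own: each $V_\omhat(t)$ with $\omhat\in\overline{\mathfrak{H}}{^{-1}}(\tomhat)$ is monic of degree $d_\tomhat/2$ by Theorem \ref{theogeneric}(2) together with the Adams-operation convention, and Remark \ref{inner} identifies the sum of the leading coefficients with $\left\langle \C_\tomhat^\muhat,1\right\rangle_{W_\tomhat}$. Your bookkeeping also shows that the exponent in the displayed leading term should read $q^{d_\tomhat/2}$ rather than $q^{d_\tomhat}$, which appears to be a typo in the statement.
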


\begin{remark}Let $\muhat=(\mu^1,\dots,\mu^k)\in\oP$ and $\tomhat=\alphahat_1^{n_1}\cdots\alphahat_s^{n_s}\in\otT$ be of same size $n$. Notice that if $\alphahat_i=((1),\dots,(1))$ for all $i$, then $W_\tomhat=\calS_n$ and 

$$
\left\langle \C_\tomhat^\muhat,1\right\rangle=\langle H_\muhat,1\rangle
$$
where $H_\muhat$ is the $\C[\calS_n]$-module $H_{\mu^1}\otimes\cdots \otimes H_{\mu^k}$. 
Note also that if $n_1=n_2=\cdots=n_s=1$ then $W_\tomhat=1$ and 

$$
\left\langle \C_\tomhat^\muhat,1\right\rangle={\rm dim}\,\C_\tomhat^\muhat.
$$
Hence in this case $\left\langle \C_\tomhat^\muhat,1\right\rangle$ is a product of Littlewood-Richardson coefficients. If moreover $s=1$ and $n_1=1$, then 

$$\left\langle \C_\tomhat^\muhat,1\right\rangle=\delta_{\tomhat,\,\muhat}.
$$
\label{cases}\end{remark}

\begin{corollary}Let $\muhat\in\oP-\{0\}$. If there exists $\tomhat\in\otTp$ such that $\left\langle \C_\tomhat^\muhat,1\right\rangle\neq 0$, then $U_\muhat(t)\neq 0$. In particular, if $\v_\muhat$ is a root  or if $\left\langle H_\muhat,1\right\rangle\neq 0$, then $U_\muhat(t)\neq 0$.
\label{coro0}\end{corollary}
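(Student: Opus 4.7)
The plan is to read off the statement directly from the decomposition (\ref{decomp}) together with Proposition \ref{suffcond}, using non-negativity to rule out cancellation, and then to verify that the two ``in particular'' hypotheses each produce an $\tomhat\in\otTp$ to which the main statement applies.

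First I would recall that by (\ref{decomp}) we have $U_\muhat(t)=\sum_{\tomhat\in\otTp}W_\muhat^\tomhat(t)$, and that Theorem \ref{theogeneric}(2) ensures the hypothesis of Theorem \ref{THEOW} is met for every $\alphahat_i$ appearing in any $\tomhat\in\otTp$. Consequently each summand $W_\muhat^\tomhat(t)$ lies in $\N[t]$, so there is no cancellation in the sum. Given the hypothesized $\tomhat\in\otTp$ with $\langle\C_\tomhat^\muhat,1\rangle_{W_\tomhat}\neq 0$, Proposition \ref{suffcond} gives that the leading term of $W_\muhat^\tomhat(t)$ equals $\langle\C_\tomhat^\muhat,1\rangle_{W_\tomhat}\,t^{d_\tomhat}$, so in particular $W_\muhat^\tomhat(t)\neq 0$. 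Combining these two observations yields $U_\muhat(t)\neq 0$.

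For the two special cases, I would exhibit explicit choices of $\tomhat$ and invoke Remark \ref{cases}. If $\v_\muhat\in\Phi(\Gamma_\muhat)$, take $\tomhat$ with $s=1$, $\alphahat_1=\muhat$, $n_1=1$; by the last sentence of Remark \ref{cases}, $\langle\C_\tomhat^\muhat,1\rangle=\delta_{\tomhat,\muhat}=1$, and $\tomhat\in\otTp$ by assumption. If instead $\langle H_\muhat,1\rangle\neq 0$, take $\tomhat=\hat{\mathbf 1}^n$ where $\hat{\mathbf 1}=((1),\dots,(1))$; then $W_\tomhat=\calS_n$ and Remark \ref{cases} gives $\langle\C_\tomhat^\muhat,1\rangle=\langle H_\muhat,1\rangle\neq 0$.

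The only small verification needed is that $\hat{\mathbf 1}\in\otTp$, i.e.\ that $\v_{\hat{\mathbf 1}}$ is a root of $\Gamma_{\hat{\mathbf 1}}$. Here $\Gamma_{\hat{\mathbf 1}}$ consists of the central vertex with $g$ loops and $k$ legs of length $0$, so it reduces to a single vertex with $g$ loops, and $\v_{\hat{\mathbf 1}}=1$. For $g=0$ this is the simple (real) root; for $g\geq 1$ the vector $(1)$ lies in the fundamental set $M(\Gamma_{\hat{\mathbf 1}})$ since its support is connected and $(\e_0,\e_0)=2-2g\leq 0$, hence it is an imaginary root. So in all cases $\v_{\hat{\mathbf 1}}\in\Phi(\Gamma_{\hat{\mathbf 1}})$ and the argument closes. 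No step here looks to be a serious obstacle; the content is entirely bookkeeping on top of Proposition \ref{suffcond} and Remark \ref{cases}.
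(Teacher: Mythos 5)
Your proposal is correct and follows essentially the same route as the paper: the paper likewise combines the no-cancellation argument (non-negativity of the $W_\muhat^\tomhat(t)$, via Theorem \ref{maintheo1}(i)/Theorem \ref{THEOW}) with Proposition \ref{suffcond}, and the two special cases are exactly the instances of Remark \ref{cases} you identify. Your explicit check that $\v_{\hat{\mathbf 1}}$ is a root of $\Gamma_{\hat{\mathbf 1}}$ is a detail the paper leaves implicit, but it is correct and harmless.
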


\begin{proof} By Theorem \ref{maintheo1} (i), the polynomials $W_\muhat^\tomhat(t)$ have non-negative integer coefficients and so there are no cancellation in the decomposition  (\ref{decomp}). We can now apply Proposition \ref{suffcond} to deduce that if $\left\langle \C_\tomhat^\muhat,1\right\rangle\neq 0$ for some $\tomhat$, then $U_\muhat(t)\neq 0$. \end{proof}

Note that Corollary \ref{coro0} is also a straightforward consequence of Theorem \ref{maintheo1}. Indeed if $\left\langle \C_\tomhat^\muhat,1\right\rangle\neq 0$ with $\tomhat=\alphahat_1^{n_1}\cdots\alphahat_s^{n_s}\in\otTp$, then the multi-partition $((n_1)^1,\dots,(n_s)^1)$ belongs to $\calR_{\tomhat,\,\muhat}$ and clearly $\ell((n_i)^1)=1\leq V_{\alphahat_i}(1)$ as $\alphahat_i$ is a root.

\begin{theorem}Let $\muhat\in\oP-\{0\}$. 

(i) If $\v_\muhat$ is a root of $\Gamma_\muhat$, then the degree of the polynomial $U_\muhat(t)$ is  at least $d_\muhat/2$. 

(ii) If  $\delta(\muhat)\geq 2$, then the degree of $U_\muhat(t)$  is exactly  $d_\muhat/2$. 

(ii) If $\delta(\muhat)\geq 3$ or $g=0$, $k=3$ and $\delta(\muhat)=2$, then $U_\muhat(t)$ is a monic polynomial.
\label{degtheo}\end{theorem}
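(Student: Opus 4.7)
I will rely on the decomposition
\[
U_\muhat(t)=\sum_{\tomhat\in\otTp}W_\muhat^\tomhat(t)
\]
of (\ref{decomp}), in which each summand has non-negative integer coefficients and, whenever $\left\langle\C_\tomhat^\muhat,1\right\rangle_{W_\tomhat}\neq 0$, carries leading term $\left\langle\C_\tomhat^\muhat,1\right\rangle_{W_\tomhat}\,t^{d_\tomhat/2}$ by Proposition \ref{suffcond}. Assertion (i) is then immediate: when $\v_\muhat\in\Phi(\Gamma_\muhat)$ the type $\muhat^1$ lies in $\otTp$, and by Remark \ref{cases} one has $\C_{\muhat^1}^\muhat\cong\C$, so $W_\muhat^{\muhat^1}(t)$ contributes the monomial $t^{d_\muhat/2}$; the non-negativity of the remaining summands forces $\deg U_\muhat(t)\geq d_\muhat/2$.

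For (ii) and (iii) I need to bound $d_\tomhat$ from above for every $\tomhat\in\otTp\setminus\{\muhat^1\}$ with $W_\muhat^\tomhat(t)\neq 0$. Writing $\tomhat=\alphahat_1^{n_1}\cdots\alphahat_s^{n_s}$ and $N:=\sum_j n_j$, the case $N=1$ falls under Lemma \ref{lem1}: then $\tomhat=\alphahat^1$ with $|\alphahat|=n$, $\alphahat\neq\muhat$, and the non-vanishing of some $c_\omhat^\muhat$ forces $\muhat\unlhd\alphahat$ strictly, whence $d_\alphahat<d_\muhat$. For $N\geq 2$, the relations $\mu^p\unlhd\sum_j n_j\alpha_j^p$ extracted from $c_\omhat^\muhat\neq 0$ allow me to apply the corollary to Theorem \ref{harcos} in each of the $k$ coordinates; summing over $p=1,\dots,k$ and invoking $\delta(\muhat)=(2g-2+k)n-\sum_p\mu^p_1$ produces, after rearrangement,
\[
d_\muhat-d_\tomhat\ \geq\ \frac{\delta(\muhat)}{n}\,S+2(1-N),\qquad S:=n^2-\sum_j n_j|\alphahat_j|^2.
\]
Since $N\geq 2$ forces $|\alphahat_j|\leq n-1$ for each $j$, the identity $S=\sum_j n_j|\alphahat_j|(n-|\alphahat_j|)$, combined with $a(n-a)\geq n-1$ for $1\leq a\leq n-1$ and with $N\leq n$, yields $S\geq N(n-1)\geq n(N-1)$, whence
\[
d_\muhat-d_\tomhat\ \geq\ (\delta(\muhat)-2)(N-1).
\]

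Part (ii) follows at once: $\delta(\muhat)\geq 2$ makes this bound non-negative, and Proposition \ref{deltapos} guarantees $\v_\muhat$ is a root so part (i) furnishes the matching lower bound. For (iii), when $\delta(\muhat)\geq 3$ and $N\geq 2$ the bound becomes $\geq N-1\geq 1$, strict; together with the strict inequality obtained in the $N=1$ case this means only $\muhat^1$ contributes to the leading coefficient, which equals $1$. The remaining sub-case $g=0$, $k=3$, $\delta(\muhat)=2$ can collapse the chain of inequalities to $0$ only when $S=n(N-1)$, which forces every $|\alphahat_j|=1$, i.e., $\tomhat=\alphahat_0^n$ with $\alphahat_0:=((1),\dots,(1))$. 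For this distinguished type $d_\tomhat=n\cdot d_{\alphahat_0}=2gn=0$, so strict inequality reduces to $d_\muhat>0$; this is checked directly from $d_\muhat=n^2-\sum_{p,q}(\mu^p_q)^2+2$ using $\sum_{p,q}(\mu^p_q)^2\leq n\sum_p\mu^p_1=n(n-2)$, which give $d_\muhat\geq 2n+2$. The main obstacle is the Harcos manipulation that packages the $k$ coordinate-wise inequalities into the single invariant $\delta(\muhat)$; the boundary configuration in (iii) is then dispatched by the direct computation with $\alphahat_0^n$.
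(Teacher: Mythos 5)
Your proof is correct and follows essentially the same route as the paper: the decomposition $U_\muhat=\sum_\tomhat W_\muhat^\tomhat$ with non-negative summands, the leading term $t^{d_\muhat/2}$ coming from the type $\muhat^1$ (i.e.\ from $V_\muhat$), and the Harcos inequality packaged into the comparison $d_\muhat\geq d_\tomhat$ exactly as in Proposition \ref{harcoscoro}. Your treatment is in fact slightly more careful than the paper's at two points the paper leaves terse --- the strictness for $N=1$ (which you get cleanly from Lemma \ref{lem1}) and the boundary case $g=0$, $k=3$, $\delta(\muhat)=2$, where you correctly isolate the only possibly tight type $\alphahat_0^n$ and verify $d_\muhat\geq 2n+2>0$ directly.
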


\begin{remark} The degree of $U_\muhat(t)$ may be strictly larger than $d_\muhat/2$. Indeed, assume that $g=0$, $n=6$ and $k=3$, and take $\muhat=((2^3),(2^3), (2^3))$. Note that $\v_\muhat=2\cdot\alphahat_1$ where $\alphahat_1$ is the indivisible imaginary root of $\tilde{E}_6$. A direct calculation shows that $d_\muhat=2=d_{\alphahat_1}$. Consider $\tomhat=\alphahat_1^2$. Then a direct calculation, using that $V_{\alphahat_1}(t)=t$ (see Section \S \ref{ex}) shows that $W_\muhat^\tomhat(t)=t^2$ while $d_\muhat/2=1$.
\end{remark}

For $\muhat=(\mu^1,\dots,\mu^k)\in\oP$, put

$$
\Delta(\muhat):=\frac{1}{2}d_\muhat-1=\frac{1}{2}(2g-2+k)n^2-\frac{1}{2}\sum_{i,j}(\mu^i_j)^2.
$$
Notice that if $\alphahat=(\alpha^1,\dots,\alpha^k)\in\oP$ (possibly of size different from $|\muhat|$), then 

$$
2|\muhat|\Delta(\alphahat)=\delta(\muhat)|\alphahat|^2+\sigma_\muhat(\alphahat).
$$
where $\sigma_\muhat(\alphahat):=\sum_{i=1}^k\sigma_{\mu^i}(\alpha^i)$.

\begin{proposition} Let $\muhat,\alphahat_1,\dots,\alphahat_s\in\oP-\{0\}$ such that  $|\muhat|=|\sum_{i=1}^s\alphahat_i|$ and $\muhat\unlhd\sum_{i=1}^s\alphahat_i$.

(i) Assume that for all $i=1,\dots,s$, we have $\delta(\muhat)|\alphahat_i|\geq 2$. Then 

\begin{equation}
d_\muhat\geq \sum_i d_{\alphahat_i}\label{dimine}
\end{equation}
If moreoever  $\delta(\muhat)|\alphahat_i|>2$ for some $i=1,\dots,s$, then the inequality (\ref{dimine}) is strict.

(ii) Assume that $g=0$, $k=3$ and $\delta(\muhat)\geq 2$. Then the inequality (\ref{dimine}) is strict.
\label{harcoscoro}\end{proposition}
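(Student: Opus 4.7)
The plan is to rewrite $n\bigl(d_\muhat-\sum_i d_{\alphahat_i}\bigr)$ as a sum of two manifestly non-negative contributions and then track strictness in each piece. Set $n:=|\muhat|$ and $a_i:=|\alphahat_i|$, so $n=\sum_i a_i$. Applying the identity $2n\,\Delta(\alphahat)=\delta(\muhat)|\alphahat|^2+\sigma_\muhat(\alphahat)$ recalled just before the proposition with $\alphahat=\muhat$ and with each $\alphahat=\alphahat_i$, subtracting, using $d_\alphahat=2\Delta(\alphahat)+2$, and substituting $n^2-\sum_i a_i^2=\sum_i a_i(n-a_i)$, I would arrive at the key identity
$$
n\Bigl(d_\muhat-\sum_{i=1}^s d_{\alphahat_i}\Bigr)=\sum_{i=1}^s a_i\bigl[\delta(\muhat)(n-a_i)-2(s-1)\bigr]+\bigl[\sigma_\muhat(\muhat)-\sum_{i=1}^s\sigma_\muhat(\alphahat_i)\bigr].
$$

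For (i), the second bracket is non-negative by the corollary of Harcos's theorem in \S \ref{harcossec}, applied leg by leg via $\mu^l\unlhd\sum_i\alpha^l_i$ for $l=1,\dots,k$ and summed, using the additivity of $\sigma_\muhat$ across the $k$ coordinates. In the first bracket, $\delta(\muhat)(n-a_i)=\sum_{j\neq i}\delta(\muhat)a_j\geq 2(s-1)$ because each summand is $\geq 2$ by hypothesis, so every term is non-negative. Under the strengthened hypothesis $\delta(\muhat)a_{j_0}>2$, the preceding estimate is strict for every index $i\neq j_0$, producing strict positivity provided $s\geq 2$; in the marginal case $s=1$, Lemma \ref{lem1} applied legwise makes the Harcos bracket strictly positive whenever $\muhat\neq\alphahat_1$ (the tautology $\muhat=\alphahat_1$ being trivial equality).

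For (ii), the hypothesis $\delta(\muhat)\geq 2$ together with $a_j\geq 1$ gives $\delta(\muhat)a_j\geq 2$ for every $j$, so (i) supplies the non-strict inequality. Strictness in the first bracket can fail only when $\delta(\muhat)a_j=2$ for every $j$, which forces $\delta(\muhat)=2$ and every $a_j=1$; but then $n=s$ and $\sum_i\alphahat_i=((n),(n),(n))$, so $\muhat\unlhd\sum_i\alphahat_i$ forces $\muhat=((n),(n),(n))$, whence $\delta(\muhat)=(0-2+3)n-3n=-2n<0$, contradicting $\delta(\muhat)\geq 2$.

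The main obstacle I anticipate is arranging the algebraic identity above cleanly; once that is in hand the combinatorial estimates reduce to the Harcos corollary and an elementary accounting of the products $\delta(\muhat)a_j$, so no further technical machinery should be required.
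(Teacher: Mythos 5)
Your key identity is the same as the paper's (the paper writes the first bracket as $\sum_{i}\sum_{j\neq i}|\alphahat_i|\left(\delta(\muhat)|\alphahat_j|-2\right)$, which is what you get after expanding $n-a_i=\sum_{j\neq i}a_j$ and $2(s-1)=\sum_{j\neq i}2$), and your proof of part (i) — non-negativity of the Harcos bracket via the corollary in \S\ref{harcossec} applied leg by leg, plus termwise positivity of the first bracket — coincides with the paper's argument; your explicit treatment of the degenerate case $s=1$ via Lemma \ref{lem1} is a point the paper passes over in silence.

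Part (ii), however, contains a genuine gap. After reducing to the case $\delta(\muhat)=2$ and $|\alphahat_i|=1$ for all $i$, you assert that $\muhat\unlhd\sum_i\alphahat_i=((n),(n),(n))$ forces $\muhat=((n),(n),(n))$. With the paper's convention ($\lambda\unlhd\mu$ means the partial sums of $\lambda$ are bounded by those of $\mu$), the partition $(n)$ is \emph{maximal} for dominance, so $\mu^l\unlhd(n)$ holds for every $\mu^l\in\calP_n$ and the condition forces nothing. The case you try to rule out really occurs: take $\muhat=((1^5),(1^5),(1^5))$, so $\delta(\muhat)=2$, and $\alphahat_i=((1),(1),(1))$ for $i=1,\dots,5$. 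There the first bracket vanishes identically and strictness must come from elsewhere. The paper's route is to note that for $g=0$, $k=3$ one has $d_\alphahat\leq 0$ whenever $|\alphahat|\leq 2$, while $d_\muhat\geq 2>0$ since $\delta(\muhat)\geq 0$ and $\sigma_\muhat(\muhat)\geq 0$; this settles the subcase where all $|\alphahat_i|\leq 2$, the remaining subcase being covered by the strict form of (i) because $\delta(\muhat)|\alphahat_i|\geq 6$ for some $i$. Alternatively, in your residual case one can compute the Harcos bracket directly: it equals $n\sum_{l}\left(\mu^l_1n-\sum_j(\mu^l_j)^2+n-\mu^l_1\right)$, which is strictly positive unless $\muhat=((n),(n),(n))$, and only at that point does your contradiction with $\delta(\muhat)\geq 2$ become available. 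As written, your argument does not establish (ii).
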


\begin{proof}Put $n:=|\muhat|$. Let us first prove (i).
$$
2n\Delta(\muhat)-2n\sum_{i=1}^s\Delta(\alphahat_i)=\delta(\muhat)\left(n^2-\sum_i|\alphahat_i|^2\right)+\sigma_\muhat(\muhat)-\sum_{i=1}^s\sigma_\muhat(\alphahat_i).
$$
We thus have

\begin{align*}
nd_\muhat-n\sum_{i=1}^sd_{\alphahat_i}&=2n\Delta(\muhat)-2n\sum_{i=1}^s\Delta(\alphahat_i)-2n(s-1)\\
&=\delta(\muhat)\left(\sum_{i\neq j}|\alphahat_i|\,|\alphahat_j|\right)-2n(s-1)+\sigma_\muhat(\muhat)-\sum_{i=1}^s\sigma_\muhat(\alphahat_i)\\
&=\sum_{i=1}^s\sum_{j\neq i}|\alphahat_i|\,\left(\delta(\muhat)|\alphahat_j|-2\right)+\sigma_\muhat(\muhat)-\sum_{i=1}^s\sigma_\muhat(\alphahat_i)
\end{align*}
By Theorem \ref{harcos} we have $\sigma_\muhat(\muhat)-\sum_{i=1}^s\sigma_\muhat(\alphahat_i)\geq 0$ hence the assertion (i).

We now prove (ii). If $|\alphahat|\leq  2$ then a straightforward calculation shows that $d_\alphahat\leq 0$, hence if for all $i=1,\dots,s$, we have $|\alphahat_i|\leq 2$, then clearly $d_\muhat-\sum_id_{\alphahat_i}$ is strictly positive. 

\end{proof}

\begin{proof}[Proof of Theorem \ref{degtheo}] By the decomposition (\ref{decomp}), we have

$$
U_\muhat(t)=V_\muhat(t)+\sum_{\tomhat\neq\muhat} W_\muhat^\tomhat(t).
$$

Recall that when $\v_\muhat$ is a root, the polynomial $V_\muhat(t)$ is non-zero monic  of degree $d_\muhat/2$ by Theorem \ref{theogeneric}. Hence (i). The assumption $\delta(\muhat)\geq 2$ implies (by Proposition \ref{harcoscoro}) that the degree of $W_\muhat^\tomhat(t)$ is smaller or equal to $d_\muhat/2$. Since the leading coefficients of $W_\muhat^\tomhat(t)$ are non-negative  we get assertion (ii).
The assertion (iii) is also a consequence of Proposition \ref{harcoscoro}. Indeed, in this case the degrees of $W_\muhat^\tomhat(t)$ are strictly smaller than $d_\muhat/2$.

\end{proof}

\begin{corollary} Denote by ${\rm St}_n$ the Steinberg character of $\GL_n(\F_q)$. Then $\langle {\rm St}_n\otimes{\rm St}_n\otimes{\rm St}_n,1\rangle$ is a monic polynomial of degree $\frac{1}{2}(n-1)(n-2)$ for all $n\geq 1$. 
\label{stein}\end{corollary}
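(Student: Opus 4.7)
The Steinberg character is $\mathrm{St}_n = \calU_{(1^n)}$, so the claim concerns $U_\muhat(t)$ for the multi-partition
$$\muhat := \bigl((1^n),(1^n),(1^n)\bigr) \in \oP_n, \qquad g=0,\ k=3.$$
The plan is to apply Theorem \ref{degtheo} whenever its hypotheses are satisfied and to dispose of the finitely many remaining small $n$ by direct inspection.

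First, I would carry out the straightforward arithmetic: with $g=0$, $k=3$ and $\mu^i = (1^n)$, formula (\ref{dmu}) gives
$$d_\muhat = n^2(2g-2+k) - \sum_{i,j}(\mu^i_j)^2 + 2 = n^2 - 3n + 2 = (n-1)(n-2),$$
so the target degree $\tfrac{1}{2}(n-1)(n-2)$ equals $d_\muhat/2$. Similarly,
$$\delta(\muhat) = (2g-2+k)n - \sum_{i=1}^k \mu^i_1 = n - 3.$$
For $n \geq 5$ we have $\delta(\muhat) \geq 2$, and Theorem \ref{degtheo}(ii) immediately gives that $U_\muhat(t)$ has degree exactly $d_\muhat/2 = \tfrac{1}{2}(n-1)(n-2)$. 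Monicness then follows from Theorem \ref{degtheo}(iii): for $n \geq 6$ we have $\delta(\muhat) \geq 3$, and for $n = 5$ we are in the exceptional case $g=0$, $k=3$, $\delta(\muhat) = 2$.

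The main obstacle is the finite list of small values $n \in \{1,2,3,4\}$, which fall outside the range of Theorem \ref{degtheo}(iii). For $n=1$, $\mathrm{St}_1$ is the trivial character of the trivial group, so $\langle\mathrm{St}_1^{\otimes 3},1\rangle = 1$, agreeing with $\tfrac{1}{2}(0)(-1)=0$. For $n=2,3,4$, I would verify the claim by direct calculation from Green's character table of $\GL_n(\F_q)$, applied to the inner-product formula of the introduction; equivalently one may read off the polynomials $U_\muhat(t)$ from L\"ubeck's tables \cite{Lubeck}, which cover exactly $k=3$, $g=0$, $n\leq 8$. In each of these three remaining cases one checks that the resulting polynomial is monic of degree $\tfrac{1}{2}(n-1)(n-2)$, completing the proof.
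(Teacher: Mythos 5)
Your proposal is correct and follows essentially the same route as the paper: compute $d_\muhat=(n-1)(n-2)$ and $\delta(\muhat)=n-3$, invoke Theorem \ref{degtheo}(ii)--(iii) for $n\geq 5$ (with $n=5$ handled by the exceptional clause $g=0$, $k=3$, $\delta(\muhat)=2$), and settle $n\leq 4$ by a finite check. The only cosmetic difference is that the paper carries out the small cases $n=2,3,4$ using its own decomposition $U_\muhat=\sum_{\tomhat}W_\muhat^\tomhat$ as illustrated in \S\ref{ex}, whereas you defer to Green's character table or L\"ubeck's computations; either verification is legitimate.
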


\begin{proof} We have $\langle {\rm St}_n\otimes{\rm St}_n\otimes{\rm St}_n,1\rangle=U_\muhat(q)$ with $\muhat=((1^n),(1^n),(1^n))$. We find that $d_\muhat=(n-1)(n-2)$. Now $\delta(\muhat)=n-3$. Hence by Theorem \ref{degtheo} the corollary is true for $n\geq 5$. The cases $n=2,3,4$ are not difficult to work out with the above results (see \S \ref{ex} to see how to apply the above results) and we find that 

\begin{align*}
&\langle {\rm St}_2\otimes{\rm St}_2\otimes{\rm St}_2,1\rangle=1\\
&\langle {\rm St}_3\otimes{\rm St}_3\otimes{\rm St}_3,1\rangle=q+1\\
&\langle {\rm St}_4\otimes{\rm St}_4\otimes{\rm St}_4,1\rangle=q^3+2q+1\\
\end{align*}
Note that for $n\geq 3$, we have $\delta(\muhat)\geq 0$ and so $\v_\muhat$ is in the fundamental domain of imaginary root (see Proposition \ref{deltapos}). For $n=2$,  the vector $\v_\muhat$ is a real root of $D_4$.

\end{proof}

\subsection{Example} Assume that $n=k=3$ and $g=0$. We first need to list the $V_\alphahat(t)$ where $|\alphahat|\leq 3$ and $\v_\alphahat\in\Phi(\Gamma_\alphahat)$. This is given in the left column of the table below (up to permutation of the coordinates of $\alphahat$).
\bigskip

\begin{equation}
\label{table}
\begin{array}{|c|c|c|c|c|}
\hline
\alphahat&|\alphahat|&\Gamma_\alphahat & \v_\alphahat&V_\alphahat(t)\\
\hline
\alphahat_1:=((1^3),(1^3),(1^3))&3&\tilde{E}_6&  (3,(2,1),(2,1),(2,1))&t\\
\alphahat_{2,1}:=((2,1),(1^3),(1^3))&3&E_6 & (3, (1),(2,1), (2,1))&1 \\
\alphahat_3:=((1^2),(1^2),(1^2))&2&D_4 & (2,(1),(1),(1))&1 \\
\alphahat_4:=((1),(1),(1))&1&A_1 & (1)&1 \\
\hline
\end{array}
\end{equation}
\bigskip

\noindent where in the second column we only put the underlying graph of $\Gamma_\alphahat$ (as the orientation does not matter) and where $\v_\alphahat$ is written in the form $(v_0,(v_{[1,1]},v_{[1,2]},\dots),(v_{[2,1]},v_{[2,2]},\dots),\dots,(v_{[k,1]},v_{[k,2]},\dots))$. Note that $\v_{\alpha_{2,1}}$, $\v_{\alphahat_3}$ and $\v_{\alphahat_4}$ of the last three rows are real roots and so by Theorem \ref{theogeneric} we have $V_\alphahat(t)=1$ in these cases. Notice that $\v_{\alphahat_1}$ of the first row is the unique indivisible positive imaginary root of $\tilde{E}_6$. We can see that $V_{\alphahat_1}(t)=t$ either by computing directly $V_{\alphahat_1}(t)$ with Formula (\ref{genvuni2}) or by proceeding as follows. Put  $\alphahat_{2,2}:=((1^3),(2,1),(1^3))$ and $\alphahat_{2,3}:=((1^3),(1^3),(2,1))$ . Applying Formula (\ref{eq1}) we find that

\begin{align*}
A_{\alphahat_1}(t)&=V_{\alphahat_1}(t)+\sum_{i=1}^32\,V_{\alphahat_{2,i}}(t)\\
&=V_{\alphahat_1}(t)+6
\end{align*}
On the other hand the polynomial $A_{\alphahat_1}(t)$ is monic of degree $1$ by Theorem \ref{kactheo} and we know by \cite{crawley-boevey-etal} that the constant term of the polynomials $A_{\alphahat_1}(t)$ is the multiplicity of the root $\v_{\alphahat_1}$. Now the multiplicity of $\v_{\alphahat_1}$ is $6$ (see \cite[Chap. 7, Corollary 7.4]{Kac1}) and so $A_{\alphahat_1}(t)=t+6$. Hence $V_{\alphahat_1}(t)=t$.

The set of types $\tomhat\in\otTp$ of size $3$ are thus 
$$
\alphahat_1,\,\,\alphahat_{2,1},\,\,\alphahat_{2,2},\,\,\alphahat_{2,3},\,\, \alphahat_3\alphahat_4,\,\, \alphahat_4^3.$$
and so the decomposition (\ref{decomp}) reads

$$
U_\muhat(t)=W_\muhat^{\alphahat_1}(t)+W_\muhat^{\alphahat_{2,1}}(t)+W_\muhat^{\alphahat_{2,2}}+W_\muhat^{\alphahat_{2,3}}+W_\muhat^{ \alphahat_3\alphahat_4}(t)+W_\muhat^{\alphahat_4^3}(t).
$$
Since the dimension vectors associated to $\alphahat_{2,1},\,\alphahat_{2,2},\,\alphahat_{2,3},\, \alphahat_3,\, \alphahat_4,\,\alphahat_4$ are real roots, by Proposition \ref{suffcond} and Remark \ref{cases} we have 

\begin{align*}
U_\muhat(t)&=\delta_{\alphahat_1,\,\muhat}V_{\alphahat_1}(t)+\delta_{\alpha_{2,1},\,\muhat}+\delta_{\alphahat_{2,2},\,\muhat}+\delta_{\alphahat_{2,3},\,\muhat}+c_{\bf (1^2)(1)}^\muhat+\left\langle H_\muhat,1\right\rangle\\
&=\delta_{\alphahat_1,\,\muhat}t+\delta_{\alpha_{2,1},\,\muhat}+\delta_{\alphahat_{2,2},\,\muhat}+\delta_{\alphahat_{2,3},\,\muhat}+c_{\bf (1^2)(1)}^\muhat+\left\langle H_\muhat,1\right\rangle
\end{align*}
where $c_{\bf (1^2)(1)}^\muhat:=c_{(1^2)(1)}^{\mu^1}c_{(1^2)(1)}^{\mu^2}c_{(1^2)(1)}^{\mu^3}$ with $(\mu^1,\mu^2,\mu^3)=\muhat$.

Recall that $c_{(1^2)(1)}^\mu=1$ if $\mu\in\{(2,1),(1,1,1)\}$ and $c_{(1^2)(1)}^\mu=0$ otherwise.

Hence

\begin{equation*}
\label{table2}
\begin{array}{|c|c|c|c|c|c|}
\hline
\muhat&\delta_{\alphahat_1,\,\muhat}&\delta_{\alphahat_{2,1},\,\muhat}&c_{\bf (1^2)(1)}^\muhat&\left\langle H_\muhat,1\right\rangle&U_\muhat(t)\\
\hline
((3^1),(3^1),(3^1))&0&0&0&1&1\\
((3^1),(3^1),(2,1))&0&0&0&0&0\\
((3^1),(3^1),(1^3))&0&0 &0&0&0\\
((3^1),(2,1),(2,1))&0&0&0&1&1\\
((3^1),(2,1),(1^3))&0&0&0&0&0\\
((3^1),(1^3),(1^3))&0&0&0&1&1\\
((2,1),(2,1),(2,1))&0&0&1&1&2\\
((2,1),(2,1),(1^3))&0&0&1&1&2\\
((2,1),(1^3),(1^3))&0&1&1&0&2\\
((1^3),(1^3),(1^3))&1&0&1&0&t+1\\
\hline
\end{array}
\end{equation*}
\bigskip
\label{ex}

\end{document}